\definecolor{cobalt}{rgb}{0.0, 0.28, 0.67}
\definecolor{darkblue}{rgb}{0.0, 0.0, 0.55}
\DeclareMathAlphabet{\mathpzc}{OT1}{pzc}{m}{it}
\newcommand{\minus}{\scalebox{0.75}[1.0]{$-$}}
\DeclareFontFamily{U}{mathx}{\hyphenchar\font45}
\DeclareFontShape{U}{mathx}{m}{n}{
      <5> <6> <7> <8> <9> <10>
      <10.95> <12> <14.4> <17.28> <20.74> <24.88>
      mathx10
      }{}
\DeclareSymbolFont{mathx}{U}{mathx}{m}{n}
\DeclareMathAccent{\widecheck}{0}{mathx}{"71}
\DeclareMathAccent{\wideparen}{0}{mathx}{"75}
\newtheorem{theorem}{Theorem}[section]
\newtheorem{cor}[theorem]{Corollary}
\newtheorem{lemma}[theorem]{Lemma}
\newtheorem{prop}[theorem]{Proposition}
\newtheorem{definition}[theorem]{Definition}
\newtheorem{remark}[theorem]{Remark}
\newtheorem{example}[theorem]{Example}
\newtheorem{thm}[theorem]{Theorem}
\newtheorem{lem}[theorem]{Lemma}
\newcommand{\M}{\mathbb L}
\renewcommand{\subset}{\subseteq}
\renewcommand{\supset}{\supseteq}
\newcommand{\hmm}{hermitian monic\xspace}
\newcommand{\bmin}{ball-minimal\xspace}
\newcommand{\bminy}{ball-minimality\xspace}
\newcommand{\irrL}{indecomposable\xspace}
\newcommand{\irrLy}{indecomposability\xspace}
\def\bel{\begin{lemma}}
\def\eel{\end{lemma}}
\def\ben{\begin{enumerate}}
\def\een{\end{enumerate}}
\def\bem{\begin{pmatrix}}
\def\eem{\end{pmatrix}}
\def\beq{\begin{equation}}
\def\eeq{\end{equation}}
\def\bep{\begin{proof}}
\def\eep{\end{proof}}
\renewcommand{\qedsymbol}{\rule[.12ex]{1.2ex}{1.2ex}}
\def\ssec{\subsection}
\def\matdg{M_d(\C)^g}
\def\matnjg{M_{n_j}(\C)^g}
\def\ee{e}
\def\cB{\mathcal B}
\def\cD{\mathcal D}
\def\cT{\mathcal T}
\def\cZ{\mathcal Z}
\def\R{{\mathbb R}}
\def\C{\mathbb {C}}
\def\N{\mathbb {N}}
\def\cD{\mathcal D}
\def\cB{\mathcal B}
\def\sW{\mathscr W}
\def\cZ{\mathcal Z}
\def\La{\Lambda}
\def\JJ{J}
\def\GG{f}
\def\interior{\operatorname{int}}
\def\cU{\mathcal{U}}
\def\cV{\mathcal{V}}
\def\rr{q}
\def\pp{p}
\def\qq{q}
\def\hair {\operatorname{hair}}
\def\sZ{\mathscr{Z}}
\def\sM{\mathscr{M}}
\def\spann{\operatorname{span}}
\def\wE{\widehat{E}}
\def\ff{\mathscr{F}}
\def\ran{\operatorname{ran}}
\def\dd{\ell}
\def\FG{G}
\def\adj{\operatorname{adj}}
\newcommand*{\mat}[1]{M_{#1}(\C)}
\newcommand{\Langle}{\mathop{<}\!}
\newcommand{\Rangle}{\!\mathop{>}}
\newcommand{\pxy}{\C\!\Langle x,y\Rangle}
\newcommand{\pxx}{\C\!\Langle x\Rangle}
\def\moverlay{\mathpalette\mov@rlay}
\def\mov@rlay#1#2{\leavevmode\vtop{
    \baselineskip\z@skip \lineskiplimit-\maxdimen
    \ialign{\hfil$#1##$\hfil\cr#2\crcr}}}
\newcommand{\plangle}{\moverlay{(\cr<}}
\newcommand{\prangle}{\moverlay{)\cr>}}
\newcommand{\rxy}{\C\plangle x,y \prangle}
\newcommand{\mop}{monic pencil\xspace}
\newcommand{\mops}{monic pencils\xspace}
\def\Mre{\M^{\rm re}}
\def\Qre{Q^{\rm re}}
\def\Lre{L^{\rm re}}
\def\LRE{\Lre}
\def\Zre{\cZ^{\rm re}}
\def\cdotb{\boldsymbol{\cdot}}
\def\QQ{\mathfrak{V}}
\def\sG{\mathscr{G}}
\def\hC{\widehat{C}}
\def\sV{\mathscr{V}}
\def\row{\operatorname{row}}
\def\matt{\operatorname{mat}}
\def\yy{y}
\newcommand{\Aalt}{\mathfrak{A}}
\newcommand{\Balt}{\mathfrak{B}}
\newcommand{\backK}{K_*} 
\newcommand{\trace}{\operatorname{trace}}
\newcommand{\exterior}{\operatorname{ext}}
\def\sH{\mathfrak H}
\newcommand{\ttW}{{\tt{W}}}
\newcommand{\ttV}{{\tt{V}}}
\newcommand{\psif}{f}
\newcommand{\AF}{A}
\newcommand{\rt}{t}
\newcommand{\wc}{\widecheck}
\newcommand{\tJ}{\tilde{J}}
\newcommand{\tT}{\tilde{T}}
\newcommand{\sI}{\mathscr{I}}
\newcommand{\mfq}{q}
\newcommand{\mfQ}{Q}
\newcommand{\mfr}{r}
\newcommand{\mfs}{s}
\newcommand{\tmfs}{\tilde{\mfs}}
\newcommand{\mfp}{p}
\newcommand{\shair}{\mathscr{H}}
\newcommand{\NN}{N}
\newcommand{\scS}{\mathscr{S}}
\newcommand{\sE}{\mathscr{E}}
\newcommand{\sF}{\mathscr{F}}
\newcommand{\BB}{\mathbb{B}}
\newcommand{\dom}{\operatorname{dom}}
\newcommand{\midx}{:}
\newcommand{\wbeta}{\widehat{\beta}}
\newcommand{\base}{\varepsilon}
\newcommand{\basr}{\varrho}
\newcommand{\tbeta}{\widetilde{\beta}}
\newcommand{\fE}{\vecd{u}\, \vecd{w}^*}
\newcommand{\vecd}[1]{\underline{#1}}
\DeclareMathOperator{\rg}{rg}
\DeclareMathOperator{\rk}{rk}
\numberwithin{equation}{section}
\newcommand{\df}[1]{{\bf{#1}}{\index{#1}}}
\newcommand{\mycontentsbox}{%
{
\addtolength{\parskip}{-2.0pt}\footnotesize
\newpage\printindex\tableofcontents}}
\def\enddoc@text{\ifx\@empty\@translators \else\@settranslators\fi
\ifx\@empty\addresses \else\@setaddresses\fi
\newpage\mycontentsbox
}
\title[Bianalytic free maps between spectrahedra and spectraballs]{Bianalytic free maps between\\[.1mm] spectrahedra and spectraballs}
\author[J.W. Helton]{J. William Helton${}^1$}
\address{J. William Helton, Department of Mathematics\\
  University of California \\
  San Diego}
\email{helton@math.ucsd.edu}
\thanks{${}^1$Research supported by the NSF grant
DMS 1500835.}
\author[I. Klep]{Igor Klep${}^{2}$}
\address{Igor Klep, Department of Mathematics, 
University of Ljubljana, Slovenia}
\email{igor.klep@fmf.uni-lj.si}
\thanks{${}^2$Supported by the 
Slovenian Research Agency grants J1-8132, N1-0057 and P1-0222. Partially supported 
by the 
Marsden Fund Council of the Royal Society of New Zealand.}
\author[S. McCullough]{Scott McCullough${}^3$}
\address{Scott McCullough, Department of Mathematics\\
  University of Florida\\ Gainesville 
   }
   \email{sam@math.ufl.edu}
\thanks{${}^3$Research supported by the NSF grants  DMS-1764231}
\author[J. Vol\v{c}i\v{c}]{Jurij Vol\v{c}i\v{c}${}^4$}
\address{Jurij Vol\v{c}i\v{c}, Department of Mathematics\\
  Texas A\& M University \\ College Station}
   \email{volcic@post.bgu.ac.il}
\thanks{${}^4$Research supported by the Deutsche Forschungsgemeinschaft (DFG) Grant No. SCHW 1723/1-1.}
\date{\today}  
\subjclass[2010]{47L25, 32H02, 13J30    (Primary); 14P10, 52A05, 46L07 (Secondary)}
\keywords{bianalytic map, birational map, 
 linear matrix inequality (LMI),
spectrahedron, spectraball, 
matrix convex set, 
operator spaces and systems, free analysis}
\begin{document}
\begin{abstract}
Linear matrix inequalities (LMIs)
  are ubiquitous
in real algebraic geometry,
semidefinite programming, control theory and signal processing.
  LMIs with (dimension free) matrix unknowns 
  are central to the theories of
  completely positive maps and operator algebras,
   operator systems and spaces, and serve as the paradigm for matrix
  convex sets.
The matricial feasibility set of an LMI is called a free spectrahedron.

In this article, the  bianalytic maps between  a very general class of
   {\it ball-like} free spectrahedra 
(examples of which include row or column contractions, and tuples of contractions)
   and arbitrary  free spectrahedra
 are characterized and seen to have an elegant
  algebraic form.
They are all highly structured rational maps.
  In the case that both the domain and
  codomain are ball-like, these
   bianalytic maps  are explicitly determined and the article gives
  necessary and sufficient
conditions
  for the existence of such
   a map with a specified value and derivative  at a  point.
In particular, this result  leads to a classification of automorphism groups of
ball-like free spectrahedra.
   The proofs depend on a novel free Nullstellensatz,
  established only after new tools in free
  analysis are developed  and applied to obtain fine detail,
 geometric in nature locally and algebraic in nature globally,
 about the boundary of ball-like free spectrahedra.
 \end{abstract} 

\maketitle

\numberwithin{equation}{section}

\dottedcontents{section}[3.8em]{}{2.3em}{.4pc} 
\dottedcontents{subsection}[6.1em]{}{3.2em}{.4pc}
\dottedcontents{subsubsection}[8.4em]{}{4.1em}{.4pc}

\section{Introduction}
Fix a positive integer $g$. 
For positive integers $n$, let $M_n(\C)^g$  denote the set of $g$-tuples $X=(X_1,\dots,X_g)$ of $n\times n$ matrices with entries from $\C$.
Given a tuple $E=(E_1,\dots,E_g)$ of $d\times e$ matrices, the sequence
 $\cB_E= (\cB_E(n))_n$  defined by \index{$\cB_E$}
\[
 \cB_E(n) =\{X\in M_n(\C)^g: \| \sum E_j\otimes X_j\|\le 1\}
\]
is a \df{spectraball}. The spectraball at \df{level} one,  $\cB_E(1),$
 is a rotationally invariant closed convex subset of $\C^g.$ 
Conversely, a rotationally invariant closed convex subset
of $\C^g$ can be approximated by sets of the form $\cB_E(1)$.
A spectraball $\cB_E$ is not 
determined by $\cB_E(1)$. For example, letting
$ F_1 =\begin{pmatrix} 1&0\end{pmatrix},$  $F_2 =\begin{pmatrix} 0 & 1 \end{pmatrix},$
and $E_j=F_j^*$, we have 
$\cB_E(1)=\cB_F(1)=\BB^2$, the unit ball in $\C^2$, but $\cB_E(2)\ne \cB_F(2)$.
Indeed, $\cB_F$ (resp. $\cB_E$) is the two variable \df{row ball}
(resp. \df{column ball}) equal the set of pairs $(X_1,X_2)$
such that $X_1 X_1^* + X_2 X_2^* \preceq I$ (resp. $X_1^* X_1 +X_2^*X_2\preceq I$),
 where the inequality 
$T\succeq 0$ indicates the selfadjoint matrix $T$ is positive semidefinite.
Another well-known example is the \df{free polydisc}. It is the spectraball
$\cB_E$ determined by the tuple $E=(e_1e_1^*,\ldots,e_ge_g^*)\in M_{g}(\C)^g,$ where 
$\{e_1,\ldots,e_{g}\}$ is the standard orthonormal basis for $\C^{g}$.
 Thus $\cB_E(n)$ is the set of 
tuples $X\in M_n(\C)^g$ such that $\|X_j\|\le 1$ for each $j.$

For $A\in M_d(\C)^g$, let $L_A(x,y)$ denote the \df{\mop}
\[
 L_A(x,y)  = I+\sum A_jx_j +\sum A_j^* y_j, \index{$L_A(x,y)$}
\]
and let
\[
  \Lre_A(x)=L_A(x,x^*)= I+\sum A_jx_j +\sum A_j^* x_j^*   \index{$\Lre_A(x)$}
\]
 denote the corresponding \df{\hmm  pencil}. The 
 set $\cD_A(1)$ consisting  of  $x\in\C^g$ such that
 $\Lre_A(x)\succeq 0$
 is a \df{spectrahedron}. 
Spectrahedra are basic objects in a number of areas of mathematics;
e.g.~semidefinite programming, convex optimization and in real algebraic geometry \cite{BPR13}. 
They also figure prominently in determinantal representations \cite{Bra11,GK-VVW,NT12,Vin93}, 
in the solution of the Kadison-Singer paving conjecture \cite{MSS15},
the solution of the Lax conjecture \cite{HV07},
 and in systems engineering \cite{BGFB94, Skelton}.

For $A\in M_{d\times e}(\C)^g$, 
the \df{homogeneous linear pencil}  \index{$\Lambda_A(x)$}
$\Lambda_A(x)=\sum_j A_jx_j$ 
evaluates at 
$X\in M_n(\C)^g$
as
\[
\Lambda_A(X) = \sum A_j \otimes X_j \in M_{d\times e}(\C)\otimes M_n(\C). 
\]
In the case  $A$ is square  ($d=e$), 
 the \hmm  pencil $\Lre_A$ evaluates at $X$ as
\[
 \Lre_A(X) = I+\Lambda_A(X) +\Lambda_A(X)^* = I+\sum A_j \otimes X_j + \sum A_j^* \otimes X_j^*.
\]
Thus $\Lre_A(X)^*=\Lre_A(X)$. 
Similarly, if $Y\in M_n(\C)^g$, then $L_A(X,Y)=I+\La_A(X)+\La_{A^*}(Y)$. In particular,
$\Lre_A(X)=L_A(X,X^*)$.

The  \df{free spectrahedron} determined by
$A\in M_r(\C)^g$ is the sequence of sets $\cD_A= (\cD_A(n))$, where\index{$\cD_A$}
\[
\cD_A(n) =\{X\in M_n(\C)^g: \Lre_A(X)\succeq 0\}.
\]
The spectraball $\cB_E$ is a spectrahedron since $\cB_E=\cD_B$ for $B=(\begin{smallmatrix}0&E\\0&0\end{smallmatrix})$.
Free spectrahedra arise naturally in applications such as systems engineering \cite{dOHMP09} and in the theories of matrix convex
sets, operator algebras and operator spaces and completely positive maps \cite{EW,HKMjems,Pau,PSS}. They also provide tractable useful 
relaxations for spectrahedral inclusion problems that arise in semidefinite programming and control theory
such as the matrix cube problem \cite{BN02,HKMS,DDOSS}.

The \df{interior} of the free spectrahedron $\cD_A$ is the sequence \index{$\interior(\cD_A)$}
$\interior(\cD_A)=\left(\interior(\cD_A(n))\right)_n$, where
\[
\interior(\cD_A(n)) =\{X\in M_n(\C)^g:  \Lre_A(X)\succ 0\}.
\]
A \df{free  mapping}
 $\varphi:\interior(\cD_B)\to \interior(\cD_A)$  is 
 a sequence of maps $\varphi_n:\interior(\cD_B(n))\to \interior(\cD_A(n))$
 such that if $X\in \interior(\cD_B(n))$ and $Y\in\interior(\cD_B(m))$,
 then
\[
 \varphi_{n+m}\left ( \begin{pmatrix} X&0\\0&Y\end{pmatrix} \right )
  = \begin{pmatrix} \varphi_n(X) &0\\0& \varphi_m(Y) \end{pmatrix},
\]
 and if $X\in \interior(\cD_B(n))$ and $S$ is an invertible $n\times n$
 matrix such that 
\[
  S^{-1} X S 
   = \begin{pmatrix} S^{-1}X_1 S, \dots, S^{-1} X_g S\end{pmatrix}\in \interior(\cD_B(n)),
\]
then 
\[
 \varphi_n(S^{-1} X S)= S^{-1}\varphi_n(X)S.
\] 
Often we omit the subscript $n$ and write only $\varphi(X)$.
The free mapping $\varphi$ is \df{analytic} if each $\varphi_n$ is analytic.

The central result of this article, Theorem \ref{t:pencil-ball-alt}, explicitly  characterizes
the free bianalytic mappings $\varphi$ between 
$\interior(\cB_E)$ and $\interior(\cD_A)$. These maps 
are birational and highly structured.
Up to affine linear change of variable, they are what 
 we call \df{convexotonic} (see Subsection \ref{sec:ctmaps} below).
 In the special case that $\cD_A=\cB_C$ is also
a spectraball, given $b\in \interior(\cB_C)$ and
a $g\times g$ matrix $M$, Corollary \ref{t:B2B} gives explicit  necessary
and sufficient  algebraic relations
between $E$ and $C$ for the existence of
a free bianalytic mapping $\varphi:\interior(\cB_E)\to \interior(\cB_C)$
satisfying $\varphi(0)=b$ and $\varphi^\prime(0)=M$. As an illustration
of the result, this corollary classifies, from first principles, the free
automorphisms of the matrix balls -- the row and column balls are special cases -- 
and of the free polydiscs.
See Remark \ref{r:aff}\eqref{i:affMT} and Subsubsections \ref{e:FPD} and \ref{e:MT}.

There are two other results we would like to highlight in this introduction.
Theorem \ref{t:hairspans}, establishes an equivalence between an algebraic
{\it irreducibility} condition on the defining polynomial of a spectraball
and  a geometric property of its boundary critical in the study of binalaytic
maps between free spectrahedra. Its 
proof requires detailed information,  both  local and global, about the boundary
of a spectraball,  collected in Section \ref{sec:bianalhedra}.
As a consequence of Theorem  \ref{t:hairspans}, we obtain 
 a  version  of the main result from \cite{AHKM18}
characterizing bianalytic maps between free spectrahedra that send 
the origin to the origin with elegant irreducibility and minimality hypotheses
on the  free spectrahedra replacing our earlier cumbersome geometric conditions.
See Theorem \ref{thm:sdmain} in Subsection \ref{subsec:main1}.
Another consequence of Theorem \ref{t:hairspans}, and an 
essential ingredient in the  proof of Theorem \ref{t:pencil-ball-alt}, is
an of independent interest Nullstellensatz. It is stated as Proposition \ref{p:HKVpBergman} in 
 Subsection \ref{sec:null}. Roughly, it says that a matrix-valued
 analytic free polynomial singular on the boundary of a spectraball 
 is $0$.

\subsection{Convexotonic maps}
\label{sec:ctmaps}
A  $g$-tuple of $g\times g$ matrices $(\Xi_1,\dots,\Xi_g)\in M_g(\C)^g$  satisfying
\begin{equation*} 
 \Xi_k \Xi_j = \sum_{s=1}^g (\Xi_j)_{k,s} \Xi_s,
\end{equation*}
for each $1\le j,k\le g$, is a \df{convexotonic tuple}. 
The expressions
 $p=\begin{pmatrix} p^1 & \cdots & p^g\end{pmatrix}$ 
  and $q=\begin{pmatrix} q^1 & \cdots & q^g\end{pmatrix}$  
whose entries are
\begin{equation*}
 p^i (x)=\sum_j  x_j e_j^* (I-\Lambda_\Xi(x))^{-1} e_i 
           \qquad \text{and} \qquad  q^i (x)= \sum x_j e_j^* (I +\Lambda_\Xi(x))^{-1} e_i, 
\end{equation*}
that is, in row form,
\begin{equation*}
 p(x)= x(I-\Lambda_\Xi(x))^{-1}
\qquad \text{and} \qquad q=x(I+\Lambda_\Xi(x))^{-1}, 
\end{equation*}
are \df{convexotonic maps}.  Here $p$ evaluates at $X\in M_n(\C)^g$ as
$$p(X) = \begin{pmatrix} X_1 & \cdots & X_g\end{pmatrix}
\left(I_{gn}-\sum_{j=1}^g\Xi_j\otimes X_j\right)^{-1}$$
and the output $p(X)\in M_{n\times gn}(\C)=M_n(\C)^g$ is interpreted as a $g$-tuple of $n\times n$ matrices.
It turns out  the mappings $p$
 and $q$ are free rational maps (as explained in Section \ref{s:prelims}) 
and inverses of one another  (see \cite[Proposition 6.2]{AHKM18}).

 Convexotonic tuples arise naturally as
 the structure constants of a finite dimensional algebra. 
 If 
 $A\in M_r(\C)^g$ is linearly independent (meaning the 
 set $\{A_1,\dots,A_g\}\subset M_r(\C)$ is linearly independent)
  and spans an algebra, then,
 e.g.~by Lemma \ref{l:secret} below,  there
 is a uniquely determined convexotonic tuple 
 $\Xi=(\Xi_1,\dots,\Xi_g)\in M_g(\C)^g$ such that
\begin{equation}
 \label{e:JgetsXi}
 A_k A_j = \sum_{s=1}^g (\Xi_j)_{k,s} A_s.
\end{equation}

\subsection{Free bianalytic maps from a  spectraball to a free spectrahedron}
\label{subsec:main2}
 A tuple  $E\in M_{d\times e}(\C)^g$ is \df{\bmin} (for $\cB_E$)
  if there does not exist $E'$ of size $d'\times e'$ with 
 $d'+e'<d+e$ such that $\cB_E=\cB_{E'}$. 
In fact, if $E$ is \bmin and $\cB_{E^\prime} =\cB_E$, then  $d\le d^\prime$ and $e\le e^\prime.$
 by Lemma \ref{l:trans}\eqref{it:lt7}\footnote{See also 
\cite[Section 5 or Lemma 1.2]{HKM11a}.} and $E$ is
unique in the following sense. Given another tuple
$F\in M_{d\times e}(\C)^g$, the tuples $E$ and $F$ are
\df{ball-equivalent} if there exists unitaries
$W$ and $V$ of sizes $d\times d$ and $e\times e$ respectively such that
$F=WEV$. Evidently if $E$ and $F$ are ball-equivalent, then
$\cB_E=\cB_F$. Conversely, if $E$ and $F$ are both \bmin and $\cB_E=\cB_F$,
then $E$ and $F$ are ball-equivalent (see Lemma \ref{l:trans}\eqref{it:lt7}
and more generally \cite{FHL18}).

 Given $A\in M_r(\C)^g$, we say $L_A$ (or $\Lre_A$) is \df{minimal} for a free spectrahedron 
 $\cD$ if $\cD=\cD_A$ and
 if for any other $B\in M_{r^\prime}(\C)^g$ satisfying  $\cD=\cD_B$
 it follows that $r^\prime \ge r$.   A minimal $L_A$ for $\cD_A$ exists and is  unique
 up to unitary equivalence \cite{HKM, Zal17}.
We can now state Theorem \ref{t:pencil-ball-alt}, our
principal result on bianalytic mappings from a spectraball 
 onto a free spectrahedron.  Since the hypotheses of Theorem
\ref{t:pencil-ball-alt} are invariant under affine linear change
of variables, 
 the normalizations $f(0)=0$ and $f^\prime(0)=I$
are simply a matter of convenience.  Given $B\in M_d(\C)^g$, by a free bianalytic 
 map   $f:\interior(\cD_B)\to \interior(\cD_A)$, we mean $f$ is
 a free analytic map and there exists a free analytic map $g:\interior(\cD_A)\to\interior(\cD_B)$
 such that  $g_n(f_n(X))=X$ and
 $f_n(g_n(Y))=Y$  for each $n,$ 
 $X\in \interior(\cD_B(n))$ and $Y\in \interior(\cD_A(n)).$

\begin{theorem}
\label{t:pencil-ball-alt}
 Suppose $E\in M_{d\times e}(\C)^g$ and  $A\in M_r(\C)^g$ are  linearly independent.
 If  $f:\interior(\cB_E)\to \interior(\cD_A)$ is a free  bianalytic mapping
 with $f(0)=0$ and $f^\prime(0)=I_g$,  then $f$ is convexotonic.

If, in addition,  $A$ is minimal for $\cD_A$, then 
  there is convexotonic tuple $\Xi\in M_g(\C)^g$ such that
 equation \eqref{e:JgetsXi} holds, and  $f$ is the corresponding 
 convexotonic map, namely
\begin{equation}
\label{e:Af}
  f(x) = x(I-\Lambda_\Xi(x))^{-1}.
\end{equation}
In particular, $\{A_1,\dots,A_g\}$ spans an algebra.

If $A$ is minimal for $\cD_A$ and  $E$ is \bmin, then $\max\{d,e\}\le r\le d+e$
and there is an $r\times r$ unitary matrix $U$ such that, up to unitary
 equivalence,
\begin{equation}
\label{e:UEE}
 A = U \begin{pmatrix} E & 0 \\ 0 & 0 \end{pmatrix}.
\end{equation}

Conversely, given a linearly independent  $E\in M_{d\times e}(\C)^g,$ 
 an integer  $r\ge \max\{d,e\}$ and an $r\times r$ unitary matrix $U$, let
$A$ be given by equation \eqref{e:UEE}. %
If there is a tuple $\Xi$ such that equation \eqref{e:JgetsXi} holds,
 then $f$ of equation \eqref{e:Af}
 is a free bianalytic map $f:\interior(\cB_E)\to \interior(\cD_A).$
\end{theorem}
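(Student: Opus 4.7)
The plan is to bootstrap from the spectrahedron-to-spectrahedron result, Theorem \ref{thm:sdmain}, by observing that the spectraball $\cB_E$ is itself a free spectrahedron: $\cB_E = \cD_B$ with $B = \begin{pmatrix} 0 & E \\ 0 & 0 \end{pmatrix}$. To apply Theorem \ref{thm:sdmain}, the irreducibility and minimality hypotheses it requires on the domain must be verified for $\cB_E$; this is where Theorem \ref{t:hairspans} enters, translating algebraic irreducibility of the defining polynomial of $\cB_E$ into the geometric boundary property used by Theorem \ref{thm:sdmain}. The Nullstellensatz of Proposition \ref{p:HKVpBergman} supplies the rigidity needed to identify $\Lre_A \circ f$ with the pullback of $\Lre_B$ up to a structured factor on the boundary. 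Once these hypotheses are checked, Theorem \ref{thm:sdmain} furnishes the conclusion that $f$ is convexotonic, proving the first assertion.

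Assume next that $A$ is minimal for $\cD_A$. Since $f$ is convexotonic there is a convexotonic tuple $\Xi$ with $f(x) = x(I-\Lambda_\Xi(x))^{-1}$, which is equation \eqref{e:Af}. To extract the algebra identity \eqref{e:JgetsXi}, I would expand $\Lre_A(f(X)) \succeq 0$ as a power series at the origin and invoke $f^\prime(0) = I_g$; the quadratic-order terms equate $A_k A_j$ with $\sum_s (\Xi_j)_{k,s} A_s$, and minimality of $A$ rules out the cancellations that would otherwise permit this identity to fail. Linear independence of $A$ (a hypothesis) then forces \eqref{e:JgetsXi} to be the \emph{unique} such relation via Lemma \ref{l:secret}, and simultaneously exhibits $\spann\{A_1,\ldots,A_g\}$ as closed under multiplication, i.e., as an algebra.

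To derive the structural form \eqref{e:UEE} under ball-minimality of $E$, I would compare two pencils representing $\cD_A$: the minimal pencil $L_A$, and the pullback through $f$ of $L_B$, where $B=\begin{pmatrix}0&E\\0&0\end{pmatrix}$. The convexotonic identity linking $\Lre_A(f(X))$ and $\Lre_B(X)$ means that, up to a conjugation factor built from $\Lambda_\Xi$, the pencil $L_B$ descends to a pencil for $\cD_A$ of size $d+e$; uniqueness (up to unitary equivalence) of the minimal $L_A$ from \cite{HKM, Zal17} then forces $A$ to be a unitary compression of $B$. The upper bound $r \le d+e$ is simply the size of $B$, while the lower bound $\max\{d,e\} \le r$ comes from ball-minimality of $E$ via Lemma \ref{l:trans}\eqref{it:lt7}: a smaller compression would yield an $E$ of strictly smaller size with the same spectraball. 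The hardest step is reconciling what ball-minimality of $E$ buys against what minimality of $A$ buys, and carefully tracking the excess zero block that appears in \eqref{e:UEE}.

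Finally, the converse is a direct calculation. Given $A = U\begin{pmatrix}E&0\\0&0\end{pmatrix}$ and a convexotonic $\Xi$ satisfying \eqref{e:JgetsXi}, the explicit inverse $q(x) = x(I+\Lambda_\Xi(x))^{-1}$ from \cite[Proposition 6.2]{AHKM18} establishes bianalyticity at the formal rational level. The containment $f(\interior(\cB_E)) \subseteq \interior(\cD_A)$ then reduces to a conjugation identity relating $\Lre_A(f(X))$ to $\Lre_B(X)$ through factors built from $I - \Lambda_\Xi(X)$, from which strict positivity transfers from domain to image; the reverse containment follows symmetrically from the analogous identity for $q$.
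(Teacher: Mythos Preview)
Your plan to deduce Theorem \ref{t:pencil-ball-alt} from Theorem \ref{thm:sdmain} does not work, because the hypotheses of Theorem \ref{thm:sdmain} are strictly stronger than those of Theorem \ref{t:pencil-ball-alt} and cannot be verified here. Concretely, write $B=\begin{pmatrix}0&E\\0&0\end{pmatrix}$ so that $\cB_E=\cD_B$. Then $Q_B(x,y)=I_d\oplus Q_E(x,y)$, which is \emph{never} an atom (it has an invertible $I_d$ block), so hypothesis \eqref{it:sdmainb} of Theorem \ref{thm:sdmain} fails regardless of any irreducibility property of $E$. Moreover, Theorem \ref{t:pencil-ball-alt} assumes nothing about $Q_A$ being an atom, about $\ker(A)$, or about ball-minimality of $A^*$, so those hypotheses cannot be checked either. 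Finally, Theorem \ref{thm:sdmain} requires the maps to extend analytically to $t\cD_A$ for some $t>1$, which is a \emph{conclusion} (not a hypothesis) in the spectraball setting. Theorem \ref{t:hairspans} does not rescue this: it relates atomicity of $Q_E$ to boundary geometry of $\cB_E$, but cannot manufacture atomicity of $Q_B$ or of $Q_A$.

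The paper's actual argument is quite different and does not go through Theorem \ref{thm:sdmain}. It first uses nilpotent evaluations (Lemma \ref{lem:useproper}, Lemma \ref{l:DimpliesB}) to show $\cB_E=\cB_A$, hence $A$ has the form $U\begin{pmatrix}E&0\\0&R\end{pmatrix}$ up to unitary equivalence. It then extends $A$ to a basis $J=(J_1,\dots,J_h)$ of the algebra it generates (Lemma \ref{l:agr2}), uses the known convexotonic bijection $\varphi:\interior(\cD_J)\to\interior(\cB_J)$ from Proposition \ref{prop:properobvious}, and studies the composite proper map $\ff=\varphi\circ\iota\circ f:\interior(\cB_E)\to\interior(\cB_J)$. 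The Nullstellensatz (Proposition \ref{p:HKVpBergman}) is applied not to $\Lre_A\circ f$ but to the homogeneous components of $\Lambda_{J\QQ}(\ff)$, forcing $\ff(x)=\begin{pmatrix}x&0&\Psi(x)\end{pmatrix}$; unwinding the composition then shows $f$ itself is convexotonic with a tuple $\widehat\Xi$ satisfying \eqref{e:JgetsXi}. Your second paragraph also inverts the logic: the identity \eqref{e:JgetsXi} is not read off from ``quadratic terms of $\Lre_A(f(X))\succeq 0$'' (an inequality, not an identity), but emerges from the structure of $\ff$ established via the Nullstellensatz. Your treatment of the converse direction is essentially correct and matches Corollary \ref{c:pba-converse}.
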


\begin{proof}
 See Corollary \ref{c:pba-converse} and Section \ref{sec:winning}. 
\end{proof}

\begin{remark}\rm
\label{r:aff}
\hspace{2em}
\begin{enumerate}[\rm (a)]\itemsep=5pt
\item\label{i:affa}
 The normalizations $f(0)=0$ and $f^\prime(0)=I_g$ can easily be enforced.
 Given a $g\times g$ matrix $\Delta$
and a tuple $C\in M_{d\times e}(\C)^g$, let \index{$\Delta\cdotb C$}
 $\Delta\cdotb C \in M_{d\times e}(\C)^g$ denote the tuple
\begin{equation}
\label{e:MdotB}
 (\Delta\cdotb C)_j = \sum_k \Delta_{j,k} C_k.
\end{equation}
 In the case $f:\interior(\cB_E)\to \interior(\cD_A)$ is bianalytic, but $f(0)=b\ne 0$ or $f^\prime(0)=M\ne I$,
 let  $\lambda:\cD_A\to\cD_F$
denote the affine linear map $\lambda(x) = x\cdotb M + b$, where
\[
F=M \cdotb (\sH A \sH) \quad\text{ and }\quad \sH=\LRE_A(b)^{-1/2}.
\]
By Proposition \ref{prop:aff}, $h = \lambda^{-1} \circ f:\interior(\cB_E) \to \interior(\cD_B)$ is bianalytic  with
$h(0)=0$ and $h^\prime(0)=I_g$ and,  if $A$ is minimal for $\cD_A$,
 then $B$ is minimal for $\cD_B.$ In particular, $f$ is, up to affine linear equivalence, convexotonic.  

Further, with a bit of bookkeeping the algebraic conditions 
of equations \eqref{e:UEE} and \eqref{e:JgetsXi} can be
expressed intrinsically in terms of $E$ and $A$. In the case
$\cD_A$ is a spectraball, these conditions are spelled
out in Corollary \ref{t:B2B} below.
\item 
In the context of Theorem \ref{t:pencil-ball-alt} (and Remark \ref{r:aff}),
 $f^{-1}$ extends analytically to an open set containing $\cD_A$ and if
 $\cD_A$ is bounded, then $f$ extends analytically to an open set
 containing $\cB_E$. The precise result is stated
as Theorem \ref{t:1point1} below. Theorem \ref{t:1point1} is 
 an elaboration on \cite[Theorem 1.1]{AHKM18}. 
\item \label{i:aff3}
 Given $A$ as in equation \eqref{e:UEE} and 
 writing $U=(U_{j,k})_{j,k=1}^2$ in the natural block form, equation \eqref{e:JgetsXi}
 is equivalent to $E_k U_{11} E_j =\sum_s (\Xi_j)_{k,s}E_s$.
\item Corollary \ref{c:rat1} and Theorem \ref{t:rat2} extend Theorem \ref{t:pencil-ball-alt}
 to cases where the codomain is matrix convex\footnote{In the present setting, matrix
convex is the same as the convexity at each level.}, but  not, by assumption, the interior of a
 free spectrahedron assuming the inverse of the bianalytic map is 
 rational.
\item 
  Here is an example of a free spectrahedron that is not a spectraball,
 but is bianalytically equivalent to a spectraball. Let
\[
 E= I_2, \, E_1 = \begin{pmatrix}0&0\\1&0 \end{pmatrix}, \,
 U=\begin{pmatrix} 0&0&1\\1&0&0\\0&1&0\end{pmatrix}
\]
and set 
\[
 A = U \begin{pmatrix} E &0\\0&0\end{pmatrix} \in M_3(\C)^2.
\]
 With $\Xi_1 = (\begin{smallmatrix} 0&1\\0&0\end{smallmatrix})$ and
 $\Xi_2=0$, the tuples $A$ and $\Xi$ satisfy equation \eqref{e:JgetsXi}
and the corresponding convexotonic map is given by 
$f(x_1,x_2) =(x_1,x_2+x_1^2)$. It is thus bianalytic from $\interior(\cB_E)$
to $\interior(\cD_A)$. Moreover, $\cD_A$ is not a spectraball since
$\cD_A(1)$ is not rotationally invariant.   \qed
\end{enumerate} 
\end{remark}

For a matrix $T$ with $\|T\|\le 1$, let \index{$D_T$}
 $D_T$ \index{$D_T$} denote the positive square root of $I-T^*T$. 
Thus, if $T$ is $k\times \ell$, then $D_{T}$ is $\ell\times\ell$
and $D_{T^*}$ is $k\times k$.

\begin{cor}
\label{t:B2B}
 Suppose $E\in M_{d\times e}(\C)^g$ and $C\in M_{k\times \ell}(\C)^g$ are 
 linearly independent and
 \bmin,  $b\in \interior(\cB_C)$ and $M\in M_{g}(\C)$. 
 There exists a free bianalytic mapping
 $\varphi:\interior(\cB_E)\to \interior(\cB_C)$ such 
 that $\varphi(0)=b$ and $M=\varphi^\prime(0)$  if and only if 
 $E$ and $C$ have the same size (that is, $k=d$ and $\ell = e$)
 and there exist $d\times d$ and $e\times e$ unitary
 matrices $\sW$ and $\sV$ respectively and a convexotonic
 $g$-tuple $\Xi\in M_g(\C)^g$  such that
\begin{enumerate}[\rm (a)]\itemsep=5pt
 \item 
 \label{i:ELCE}
  $-E_j \sV^* \Lambda_C(b)^* \sW E_k =  \sum_s (\Xi_k)_{j,s}E_s =(\Xi_k \cdotb E)_j$; and
 \item 
  \label{i:CME}
     $D_{\Lambda_C(b)^*} \sW E_j \sV^* D_{\Lambda_C(b)}  = \sum_s M_{js} C_s = (M\cdotb C)_j,$
\end{enumerate} 
for all $1\le j,k\le g$.    Moreover, in this case $\varphi = \psi \cdotb M + b,$
 where $\psi$ is the convexotonic map associated to $\Xi$; i.e.,  $\psi(x) = x(I-\Lambda_\Xi(x))^{-1}.$
\end{cor}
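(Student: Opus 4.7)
The strategy is to reduce Corollary \ref{t:B2B} to Theorem \ref{t:pencil-ball-alt} via the affine normalization of Remark \ref{r:aff}\eqref{i:affa} and Proposition \ref{prop:aff}, and then to unpack the resulting algebraic identities in the special case that the codomain is itself a spectraball. First, realize $\cB_C$ as a free spectrahedron by setting $\tilde A_j = \begin{pmatrix} 0 & C_j \\ 0 & 0 \end{pmatrix} \in M_{k+\ell}(\C)$, so that $\cB_C = \cD_{\tilde A}$; the \bminy of $C$ together with Lemma \ref{l:trans}\eqref{it:lt7} implies that $\tilde A$ is minimal for $\cD_{\tilde A}$.

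For the forward direction, suppose $\varphi : \interior(\cB_E) \to \interior(\cB_C)$ is a free bianalytic map with $\varphi(0) = b$ and $\varphi'(0) = M$. Put $\lambda(x) = x \cdotb M + b$ and $h = \lambda^{-1} \circ \varphi$. By Proposition \ref{prop:aff} (cf.\ Remark \ref{r:aff}\eqref{i:affa}), $h : \interior(\cB_E) \to \interior(\cD_B)$ is free bianalytic with $h(0) = 0$ and $h'(0) = I_g$, where $\sH = \Lre_{\tilde A}(b)^{-1/2}$ and $B = M \cdotb (\sH \tilde A \sH)$ is minimal for $\cD_B$. Applying Theorem \ref{t:pencil-ball-alt} to $h$ yields a convexotonic tuple $\Xi$ with $h(x) = x(I - \Lambda_\Xi(x))^{-1}$, so that $\varphi = h \cdotb M + b = \psi \cdotb M + b$, along with an $r \times r$ unitary $U$ satisfying $B = U \begin{pmatrix} E & 0 \\ 0 & 0 \end{pmatrix}$ and $\max\{d,e\} \le r \le d+e$. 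Since $B$ has size $r = k+\ell$, and since running the same argument for $\varphi^{-1}$ interchanges the roles of $E$ and $C$, a dimension and rank count (using that each $B_j$ has image inside an $\sH$-image of a $k$-dimensional subspace while the image of $U\begin{pmatrix} E & 0 \\ 0 & 0\end{pmatrix}$ sits inside a $d$-dimensional subspace) forces $k = d$ and $\ell = e$.

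The heart of the argument is extracting \eqref{i:ELCE}--\eqref{i:CME} from the identity $B = U \begin{pmatrix} E & 0 \\ 0 & 0 \end{pmatrix}$ written block by block. Using the explicit $2 \times 2$ block form $\Lre_{\tilde A}(b) = \begin{pmatrix} I_d & \Lambda_C(b) \\ \Lambda_C(b)^* & I_e \end{pmatrix}$ and the standard formula for its inverse square root in terms of the Julia/defect operators $D_{\Lambda_C(b)}$ and $D_{\Lambda_C(b)^*}$, one computes $\sH \tilde A_j \sH$ blockwise, expressing the entries of $B_j = \sum_k M_{j,k} \sH\tilde A_k\sH$ in terms of $C$, $\Lambda_C(b)$, and the defect operators. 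Matching these against the blocks of $U \begin{pmatrix} E & 0 \\ 0 & 0 \end{pmatrix}$ identifies the $d \times d$ and $e \times e$ diagonal blocks of $U$ with the unitaries $\sW$ and $\sV$ respectively (after absorbing defect operator factors). The intrinsic reformulation of \eqref{e:JgetsXi} given in Remark \ref{r:aff}\eqref{i:aff3} then transcribes to condition \eqref{i:ELCE}, while matching the off-diagonal blocks delivers condition \eqref{i:CME}.

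The converse is a direct verification: starting from the data $(b, M, \sW, \sV, \Xi)$ satisfying \eqref{i:ELCE}--\eqref{i:CME}, assemble a $(d+e) \times (d+e)$ unitary $U$ out of $\sW$, $\sV$, $\Lambda_C(b)$, and the Julia operators $D_{\Lambda_C(b)}$, $D_{\Lambda_C(b)^*}$; set $B = U \begin{pmatrix} E & 0 \\ 0 & 0 \end{pmatrix}$; invoke the converse direction of Theorem \ref{t:pencil-ball-alt} to obtain the free bianalytic map $\psi(x) = x(I - \Lambda_\Xi(x))^{-1} : \interior(\cB_E) \to \interior(\cD_B)$; and check that \eqref{i:ELCE}--\eqref{i:CME} are exactly what is needed for the affine map $\lambda$ to identify $\cD_B$ with $\cB_C$, so that $\varphi := \lambda \circ \psi = \psi \cdotb M + b$ is the required bianalytic map. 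The principal obstacle is the block-matrix bookkeeping for $\Lre_{\tilde A}(b)^{-1/2}$ and the careful matching of blocks in both directions; this is technical but routine once the Julia operator formalism is in hand, and is where the unitaries $\sW$, $\sV$ and the precise forms in \eqref{i:ELCE}, \eqref{i:CME} arise.
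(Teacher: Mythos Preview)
Your overall strategy matches the paper's: realize $\cB_C$ as $\cD_{\tilde A}$, affinely normalize via Proposition~\ref{prop:aff}, apply Theorem~\ref{t:pencil-ball-alt}, and read off \eqref{i:ELCE}--\eqref{i:CME} from the block structure. There is, however, one substantive technical divergence worth flagging. You propose to compute the symmetric square root $\sH=\Lre_{\tilde A}(b)^{-1/2}$ explicitly in terms of defect operators; this is possible but not as ``standard'' or clean as you suggest. The paper instead sidesteps this by using the triangular factor
\[
\sG=\begin{pmatrix} I_k & \Lambda_C(b)\\ 0 & D_{\Lambda_C(b)}\end{pmatrix}^{-1},
\]
which satisfies $\sG^*\Lre_{\tilde A}(b)\sG=I$ and hence differs from $\sH$ only by a unitary. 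Conjugating $\tilde A$ by $\sG^*,\sG$ rather than by $\sH,\sH$ gives a unitarily equivalent (hence equally minimal) tuple whose block entries are immediately expressible via $D_{\Lambda_C(b)}$ and $D_{\Lambda_C(b)^*}$, making the subsequent block-matching straightforward. Your route would work but the bookkeeping is heavier.

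A second, minor point: your size-matching argument (``run the same argument for $\varphi^{-1}$'') is vaguer than what the paper does. The paper computes $\rk\sum F_j^*F_j$ and $\rk\sum F_jF_j^*$ directly from the two available expressions for $F$ (one via $\sG$ and $C$, one via $U$ and $E$) and equates them, using $\ker(E)=\ker(E^*)=\{0\}$ and $\ker(C)=\ker(C^*)=\{0\}$ from \bminy. This is cleaner than appealing to symmetry of the situation. Also, the minimality of $\tilde A$ comes from Lemma~\ref{l:trans}\eqref{it:lt3}, not \eqref{it:lt7}.
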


The proof of Corollary \ref{t:B2B} appears in Subsubsection \ref{sssec:proof1point3}.

\begin{remark}\rm
\label{r:GivenC}
\begin{enumerate}[\rm (a)]\itemsep=5pt
 \item  If $\cB_E$ and $\cB_C$ are bounded (equivalently $E$ and $C$
 are linearly independent \cite[Proposition 2.6(2)]{HKM}), then any free bianalytic map 
  $\varphi:\interior(\cB_E)\to\interior(\cB_C)$
 is, up to an affine linear bijection, convexotonic without any further assumptions (e.g.,
 $C$ and $E$ need not be \bmin). Indeed, simply replace $E$ and $C$ by 
 \bmin $E^\prime$ and $C^\prime$ with $\cB_{E^\prime} = \cB_E$ and $\cB_{C^\prime}=\cB_C$ 
 and apply Corollary~\ref{t:B2B}.
 The \bmin hypothesis allows for an explicit description
 of $\varphi$.\looseness=-1

\item  While $M$ is not assumed invertible, both the condition $M=\varphi^\prime(0)$ (for a
 bianalytic $\varphi$)  and 
 the identity of  Corollary \ref{t:B2B}\eqref{i:CME} (since $E$ is assumed linearly independent) imply it is. 

\item  Assuming $E$ and $C$ of Corollary \ref{t:B2B} are \bmin, 
 by using the relation between $E$ and $C$ from Corollary \ref{t:B2B}\eqref{i:CME},
 item \eqref{i:ELCE}  can be expressed purely in terms of $C$ as
\begin{equation}
 \label{e:onlyC}
 C_j D_{\Lambda_C(b)}^{-1} \Lambda_C(b)^* D_{\Lambda_C(b)^*}^{-1} C_k 
   \in \spann\{C_1,\dots,C_g\}.
\end{equation}
 In particular,   given a \bmin tuple $C\in M_{d\times e}(\C)^g$ 
 and $b\in \interior(\cB_C)$, if equation \eqref{e:onlyC} holds 
 then, for any choice of $M,\sW$ and $\sV$ and solving
 equation \eqref{i:CME} for $E,$ 
 there is a free bianalytic map $\varphi:\interior(\cB_E)\to\interior(\cB_C)$  such 
 that $\varphi(0)=b$ and $\varphi^\prime(0)=M.$ 
\item 
\label{i:affMT}
 Among the results in \cite{MT16} is a complete analysis
of the free bianalytic maps between the free versions of 
matrix ball,  antecedents and special cases of which 
appear elsewhere in the literature such as \cite{HKMSlinglend}
and \cite{Po2}. The connection between the results in \cite{MT16} on 
free matrix balls and Corollary \ref{t:B2B} is worked out
in Subsubsection \ref{e:MT}. 
Subsubsection \ref{e:FPD} gives a complete
classification of free automorphisms of free polydiscs. \qed
\end{enumerate}
\end{remark}

\subsection{Main result on maps between free spectrahedra}\label{subsec:main1}
The article \cite{AHKM18} 
characterizes the triples $(p,A,B)$ such that $p:\cD_A\to\cD_B$
 is bianalytic under 
unconventional geometric hypotheses (sketched in Subsection \ref{sec:introGeom} below), cf.~\cite[\S 7]{AHKM18}.
Here we obtain Theorem \ref{thm:sdmain}  by  converting  those geometric hypotheses to algebraic {\it irreducibility} hypotheses
that we now describe.

For a tuple of rectangular matrices $E=(E_1,\dots,E_g)\in M_{d\times e}(\C)^g$ 
denote \index{$Q_E$} \index{$\M_E$} \index{$\ker(E)$} \index{$\ran(E)$}
$$Q_{E}(x,y) := I - \Lambda_{E^*}(y) \Lambda_{E}(x),\qquad 
\M_E(x,y):=\begin{pmatrix}I & \Lambda_E(x) \\ \Lambda_{E^*}(y) & I\end{pmatrix},$$
$$\ker(E):= \bigcap_{j=1}^g  \ker(E_j) = \ker(\begin{pmatrix} E_1\\ \vdots \\ E_g\end{pmatrix}),
\quad \ran(E) = \ran(\begin{pmatrix} E_1 & \dots E_g\end{pmatrix}).
$$
Thus $\M_E(x,y)=L_F(x,y)$ where
\[
 F=\begin{pmatrix} 0  & E \\ 0 & 0\end{pmatrix}.
\]
We also let $\Mre_E$ denote the \hmm   pencil,
\[
 \Mre_E(x) := \M_E(x,x^*) = L_F(x,x^*) = \Lre_F(x) \index{$\Mre_E(x)$}
\]
 and likewise 
\[
\Qre_E(x)= Q_E(x,x^*).
\]
Observe  $\cB_E=\cD_{\M_E^{\rm re}}:=\{X: \M_E(X,X^*)\succeq 0\}=\cD_F.$
 Finally, for a \mop $L_A$, let
\begin{equation*}\index{$\cZ_{L_A}$}\index{$\Zre_{L_A}$}
 \cZ_{L_A}=\{(X,Y): \det(L_A(X,Y))=0\}, \ \ \ 
 \Zre_{L_A} = \{X: \det(\Lre_A(X))=0\}.
\end{equation*}
 We also  use the  notation  $\cZ_{Q_E}=\cZ_{\M_E}.$

Let $\pxx$ \index{$\pxx$}  denote the \df{free algebra}  of noncommutative polynomials
 in the letters $x=\{x_1,\dots,x_g\}.$ 
Thus elements of $\pxx$ are finite $\C$-linear combinations of words
 in the letters $\{x_1,\dots,x_g\}.$ For each positive integer $n$,
 an  element $p$ of $\pxx$ naturally 
 induces a function, also denoted $p$, mapping $M_n(\C)^g\to M_n(\C)$
 by replacing the letter $x_1,\dots,x_g$  by $n\times n$ matrices
 $X_1,\dots,X_g.$ In this way,
 we view $p$ as a function on the disjoint union of the sets $M_n(\C)^g$ 
 (parameterized by $n$). 
When $e>1$ there are non-constant $F\in \pxx^{e\times e}$  that are
invertible, and
 the appropriate analog of irreducible elements of  $\pxx^{e\times e}$
 reads as follows.
 An $F\in \pxx^{e\times e}$ with $\det f(0)\neq 0$ is an \df{atom} \cite[Chapter 3]{Coh}
 if $F$ does  not factor; i.e., $F$ cannot be
written
 as $F=F_1F_2$ for some non-invertible $F_1,F_2\in\pxx^{e\times e}$.
As a consequence of Lemma \ref{l:trans}\eqref{it:lt5}
  below, we will see that
 if $Q_E$ is an atom, $\ker(E)=\{0\}$ and $\ker(E^*)=\{0\}$,
 then $E$ is \bmin.

\begin{theorem}
	\label{thm:sdmain}
 Suppose $A\in M_d(\C)^g$, $B\in M_e(\C)^g$ and
\begin{enumerate}[\rm (a)]
 \item\label{it:sdmaina} $\cD_A$ is bounded;
 \item\label{it:sdmainb}
 $Q_A$ and $Q_B$ are atoms, $\ker(B)=\{0\}$ 
 and  $A^*$ is \bmin;
\item $t>1$ and $p:\interior(t\cD_A)\to M(\C)^g$ and $q:\interior(t\cD_B)\to M(\C)^g$
are free bianalytic mappings;
\item $p(0)=0,$ $p^\prime(0)=I$, $q(0)=0$ and $q^\prime(0)=I$.
\end{enumerate}
 If   $q(p(X))=X$ and $p(q(Y))=Y$ for $X\in \cD_A$
 and $Y\in \cD_B$ respectively, then 
 $p$ is convexotonic,  $A$ and $B$ are of the same size
 $d=e$, and there exist  $d\times d$ unitary matrices
 $Z$ and $M$ and a convexotonic $g$-tuple $\Xi$  such that 
\begin{enumerate}[\rm (1)]
 \item\label{it:secret0}
 $p$ is the   convexotonic map
     $p=x(I-\Lambda_\Xi(x))^{-1}$, where
  for each $1\le j,k\le g$, 
 \begin{equation}
   \label{it:secretalg0}
  A_k (Z-I)  A_j = \sum_s (\Xi_j)_{k,s} A_s;
\end{equation}
in particular, 
     the tuple $R=(Z-I)A$ spans an algebra with multiplication table $\Xi$,
\[
 R_k R_j = \sum_s (\Xi_j)_{k,s} R_s;
\]
\item \label{it:BMZEM0} $B_j= M^* ZA_jM$ for $1\leq j\leq g$.
\end{enumerate}
\end{theorem}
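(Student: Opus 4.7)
The plan is to reduce Theorem~\ref{thm:sdmain} to the main theorem of \cite{AHKM18}, which characterizes bianalytic free maps between free spectrahedra under certain \emph{geometric} hypotheses on their boundaries (a ``hair spanning'' condition). The key bridge is Theorem~\ref{t:hairspans}, whose whole purpose, as advertised in the introduction, is to show that the algebraic irreducibility hypotheses used here — the atom conditions on $Q_A$ and $Q_B$, together with $\ker(B)=\{0\}$ and \bminy of $A^*$ — are equivalent to precisely those geometric boundary conditions.

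First I would invoke Theorem~\ref{t:hairspans} symmetrically for $A$ and $B$. Since $\cD_A$ is bounded and $p$ is bianalytic, $\cD_B$ is bounded as well, so Theorem~\ref{t:hairspans} converts the algebraic hypothesis into the geometric hair-spanning condition at the relevant boundary points of both $\cD_A$ and $\cD_B$. Feeding this into the main result of \cite{AHKM18} and using the normalizations $p(0)=0$, $p'(0)=I$, $q(0)=0$, $q'(0)=I$, one concludes that $p$ is convexotonic: there exists a convexotonic tuple $\Xi\in M_g(\C)^g$ with
\[
p(x)=x(I-\Lambda_\Xi(x))^{-1},\qquad q(y)=y(I+\Lambda_\Xi(y))^{-1}.
\]

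Next I would extract the algebraic relations \eqref{it:secretalg0} and in item \eqref{it:BMZEM0}. Since $p$ sends $\interior(\cD_A)$ bianalytically onto $\interior(\cD_B)$ and extends across the boundary by (an appropriate extension of) Theorem~\ref{t:1point1}, the real zero set $\Zre_{L_B}$ pulls back through $p$ to $\Zre_{L_A}$. This should force a rational factorization identity of the shape
\[
\Lre_B(p(X))=V(X)^*\,\Lre_A(X)\,V(X),
\]
where $V$ is a rational function built from $(I-\Lambda_\Xi(X))^{-1}$. Expanding in powers of $X$: the linear-in-$X$ term produces a unitary $M$ and a rotation-type unitary $Z$ intertwining $B$ and $A$ via $B_j=M^*ZA_jM$, which in particular forces $d=e$. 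Matching the quadratic-in-$X$ term, after substituting the linear relation, yields
\[
A_k(Z-I)A_j=\sum_s(\Xi_j)_{k,s}A_s,
\]
so that the tuple $R=(Z-I)A$ spans an algebra with multiplication table $\Xi$. The unitarity of $Z$ and $M$ is forced by the \bminy of $A^*$ and the atom hypotheses, which pin down the minimal pencils uniquely up to unitary equivalence; the free Nullstellensatz of Proposition~\ref{p:HKVpBergman} eliminates spurious non-constant factors in the factorization.

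The principal obstacle is the third step: while \cite{AHKM18} readily delivers the convexotonic form of $p$, pinning down the \emph{unitary} $Z$ in the algebraic relations takes real work. The tuple $\Xi$ records only the local (derivative) data of $p$, whereas $Z$ captures the global twist between the defining pencils $L_A$ and $L_B$. Producing $Z$, verifying its unitarity, and deriving the precise identity for $A_k(Z-I)A_j$ rests on combining the Nullstellensatz of Proposition~\ref{p:HKVpBergman} with the minimality/\bminy assumptions, in a manner closely parallel to the converse direction of Theorem~\ref{t:pencil-ball-alt}.
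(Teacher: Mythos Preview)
Your overall strategy is right: translate the algebraic hypotheses into the geometric ones required by \cite{AHKM18}, then cite that paper. But you misidentify where the work lies.

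First, the bridge to \cite{AHKM18} is Proposition~\ref{p:eigony}, not Theorem~\ref{t:hairspans} directly. Proposition~\ref{p:eigony} is the statement that ``$Q_A$ is an atom and $\ker(A)=\{0\}$'' is equivalent to ``$A$ is eig-generic,'' and that ``$A^*$ is \bmin'' is equivalent to ``$A$ is $*$-generic and $\ker(A)=\{0\}$.'' These are precisely the hypotheses of \cite[Corollary~7.11]{AHKM18}. (Theorem~\ref{t:hairspans} is the engine behind Proposition~\ref{p:eigony}, but the translation you need is the eig-generic/$*$-generic language, not the raw hair-spanning statement.)

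Second, and more importantly, your ``principal obstacle'' is not an obstacle at all. \cite[Corollary~7.11]{AHKM18} does not merely deliver the convexotonic form of $p$; it delivers the entire conclusion of Theorem~\ref{thm:sdmain}, including the unitary matrices $Z$ and $M$, the relation $B_j=M^*ZA_jM$, and the identity $A_k(Z-I)A_j=\sum_s(\Xi_j)_{k,s}A_s$. There is no need to expand $\Lre_B(p(X))$ in powers of $X$, no need to match quadratic terms, and no role here for the Nullstellensatz of Proposition~\ref{p:HKVpBergman}. All of that machinery lives inside the proof of \cite[Corollary~7.11]{AHKM18}, which you are citing as a black box.

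What you are missing is the one thing the paper actually does check: that the hypotheses of \cite[Corollary~7.11]{AHKM18} include boundary preservation, namely $p(\partial\cD_A)\subset\partial\cD_B$ and $q(\partial\cD_B)\subset\partial\cD_A$, and that $\cD_B$ is bounded. This is a short argument using the mutual inverse relations $q(p(X))=X$ and $p(q(Y))=Y$ on the closed spectrahedra: if $X\in\partial\cD_A$ but $p(X)\in\interior(\cD_B)$, then a nearby $Z\notin\cD_A$ would satisfy $p(Z)\in\cD_B$, whence $Z=q(p(Z))\in\cD_A$, a contradiction. That verification, plus Proposition~\ref{p:eigony}, is the entire proof.
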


\begin{proof}
See Section \ref{sec:proof1}.
\end{proof}

\ssec{Geometry of the boundary vs irreducibility}
\label{sec:introGeom}
At the core of the proofs of our main theorems in this paper
 is a richness of the
geometry of the boundary, $\partial \cB_E,$  of a  \index{$\partial \cB_E$}
spectraball, $\cB_E$.
We shall show that  a  (rather ungainly) key  geometric  property of the 
boundary of $\cB_E$ is 
equivalent to the defining polynomial $Q_E$ of $\cB_E$ 
 being an atom and $\ker(E)=\{0\}.$

To describe the geometric structure involved, fix $E\in M_{d\times e}(\C)^g.$
 The \df{detailed boundary} \index{$\widehat{\partial \cB_E}$}
 $\widehat{\partial \cB_E}$  of $\cB_E$ is the sequence of sets 
\[
\widehat{\partial \cB_E }(n) :=
\left\{
(X,v)\in\mat{n}^g\times [\C^e\otimes \C^n]  \midx  X \in \partial \cB_E , \
 v\ne 0, \,  \Qre_E (X, X^*) v = 0  \right\}.
\]
For $n\in\N$, let \df{$\widehat{\partial^1 \cB_E}(n)$} denote 
 the points $(X,v)$ 
 in $\widehat{\partial \cB_E}(n)$ such that $\dim\ker \Qre_E(X,X^*)=1.$  For a
 vector $v \in \C^e\otimes \C^n = \C^{en},$ partitioned as
 $$ v = \bem v_1 \\ v_2 \\ \vdots \\ v_n
      \eem  $$
      for $v_k \in \C^e$, 
define $\pi(v) = v_1$.
The geometric property important to mapping studies is that
$\pi(\widehat{ \partial^1 \cB_E})$ contain enough vectors to span
$\C^e$ or better yet to hyperspan $\C^e$.
Here a set $\{u^1,\dots,u^{e+1}\}$ of vectors in $\C^e$
\df{hyperspans} $\C^e$  provided  each
$e$ element subset spans; i.e., is a basis of $\C^e.$

\begin{thm}
\label{t:hairspans}
Let $E\in M_{d\times e}(\C)^g$. Then
\ben[\rm (1)]
\item\label{i:hairspans}
 $E$ is \bmin  if and only if
$\pi(\widehat{ \partial^1 \cB_E})$
 spans $\C^e$.

\item
\label{i:hairhyperspans}
 $Q_E$ is an atom and $\ker (E)=(0)$
if and only if
$\pi(\widehat{ \partial^1 \cB_E} )$
contains a hyperspanning set for  $\C^e$.\looseness=-1
\een
\end{thm}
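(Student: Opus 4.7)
The plan is to translate both equivalences into statements about the top singular vector structure of $\Lambda_E(X)$ at boundary points and then match them with the realization-theoretic data of $E$ via Lemma \ref{l:trans} and the rank-one boundary geometry developed in Section \ref{sec:bianalhedra}. The enabling observation is that for $X\in\partial\cB_E$, a vector $v \in \ker\Qre_E(X,X^*)$ satisfies $v = \Lambda_E(X)^*\Lambda_E(X)v$, so $u := \Lambda_E(X)v$ and $v$ are paired top singular vectors (singular value $1$) of $\Lambda_E(X)$, and $\pi(v)$ is the leading $\C^e$-component of $v$.

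Several implications come by contrapositive. If $E$ is not ball-minimal, Lemma \ref{l:trans}\eqref{it:lt7} gives, after a unitary change of variable on $\C^e$, a block-zero reduction of $E$ in which every top singular vector of $\Lambda_E(X)$ lies in a proper subspace $W\otimes\C^n$; hence $\pi(\widehat{\partial^1\cB_E})\subseteq W\subsetneq\C^e$. If $\ker(E)\ne 0$ and $0\ne\xi\in\ker E$, then $\xi\otimes\C^n\subseteq\ker\Lambda_E(X)$ forces every $v\in\ker\Qre_E(X,X^*)\subseteq\ran\Lambda_E(X)^*$ to have $\pi(v)\perp\xi$, so $\pi(\widehat{\partial^1\cB_E})\subseteq\xi^\perp$ and hyperspanning fails. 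Finally, if $Q_E=F_1F_2$ is a nontrivial factorization in $\pxy^{e\times e}$, then at Zariski-generic boundary points $F_1(X,X^*)$ is invertible and so $\ker\Qre_E(X,X^*)=\ker F_2(X,X^*)$; the noninvertible factor $F_2$ pins $\pi(v)$ pointwise to a proper $X$-dependent subspace of $\C^e$, incompatible with hyperspanning by an $(e+1)$-point general position argument.

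For the converses, suppose first $W:=\spann\pi(\widehat{\partial^1\cB_E})\subsetneq\C^e$. Unitary covariance of $\cB_E$ under conjugation of the matrix variable (i.e., $UXU^*\in\partial\cB_E$ for every unitary $U$ on $\C^n$) immediately promotes $\pi(v)\in W$ to $v_k\in W$ for every block $k$, so every rank-one top singular vector of $\Lambda_E(X)$ lies in $W\otimes\C^n$, and by density/continuity this extends to all boundary points. A standard norm calculation then gives $\cB_E=\cB_{E|W}$, contradicting ball-minimality since $\dim W<e$. For part (2), assume $\pi(\widehat{\partial^1\cB_E})$ hyperspans: then part (1) yields ball-minimality, the contrapositive above yields $\ker(E)=0$, and it remains to exclude a nontrivial factorization $Q_E=F_1F_2$. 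Such a factorization would yield $X$-dependent proper subspaces of $\C^e$ containing every $\pi(v)$ at regular boundary points; Proposition \ref{p:HKVpBergman} (the free Nullstellensatz) promotes this to a global polynomial identity forcing a linear dependence among any $e+1$ outputs of $\pi$, contradicting hyperspanning.

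The main obstacle I anticipate is this final step: turning a hypothetical matrix-valued factorization of $Q_E$ in $\pxy^{e\times e}$ into a rigorous constraint on $\pi$ tight enough to break hyperspanning. Matrix-valued noncommutative factorizations are non-unique and the envelope they produce for $\pi$ is only pointwise a priori; passing from pointwise linear relations at rank-one contacts to a global polynomial obstruction requires both the detailed local and global boundary description of $\partial\cB_E$ from Section \ref{sec:bianalhedra} and the free Nullstellensatz of Proposition \ref{p:HKVpBergman} -- precisely the two new tools highlighted in the introduction as essential for the main theorems of the paper.
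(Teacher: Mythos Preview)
Your outline handles part (1) along the lines of Proposition \ref{p:hairspans}, though the paper's argument that $\cB_E=\cB_{EW}$ when $W=\spann\pi(\widehat{\partial^1\cB_E})$ goes through Zariski density of $\partial^1\cB_E$ in $\cZ_{\M_E}$ (\cite[Corollary 8.5]{HKV}) rather than a direct norm computation.

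The genuine gap is in part (2): you prove only the direction $(\Leftarrow)$, and you do so twice. Your first paragraph (``$Q_E$ factors $\Rightarrow$ hyperspanning fails'') and your second paragraph (``hyperspans $\Rightarrow$ atom'') are contrapositives of the \emph{same} implication. You never address $(\Rightarrow)$: given that $Q_E$ is an atom and $\ker E=0$, why must $\pi(\widehat{\partial^1\cB_E})$ contain a hyperbasis? This is the hard direction, and nothing in your plan produces the extra $(e{+}1)$-st vector in general position. The paper's mechanism (Lemma \ref{l:hyperhair}) is algebro-geometric: the atom hypothesis yields, via Lemma \ref{l:trans}\eqref{it:lt2} and \cite{KV,HKV}, that $\cZ_{Q_E}(n)$ is an irreducible hypersurface and that $\{(X,X^*):X\in\partial^1\cB_E(n)\}$ is Zariski dense in it for large $n$. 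One then builds a polynomial map $u:\cZ_{Q_E}(n)\to\C^e$ out of columns of $\adj Q_E$, so that $u(X,X^*)$ is a scalar multiple of $\pi(v)$ at rank-one boundary points. Part (1) supplies $X^1,\dots,X^e$ with $\{u(X^k,X^{k*})\}$ a basis; writing $u=\sum_k r_k\,u(X^k,X^{k*})$ for polynomial coefficients $r_k$, irreducibility of the hypersurface prevents $r_1\cdots r_e$ from vanishing on all of $\partial^1\cB_E$, yielding an $X^0$ with every $r_k(X^0,X^{0*})\ne 0$ and hence a hyperbasis $\{u(X^0,X^{0*}),\dots,u(X^e,X^{e*})\}$.

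Your $(\Leftarrow)$ argument is also weaker than the paper's. An $X$-dependent proper subspace containing $\pi(v)$ does not by itself obstruct hyperspanning as $X$ varies, and invoking Proposition \ref{p:HKVpBergman} does not obviously produce a linear relation among $e{+}1$ different outputs of $\pi$. The paper sidesteps this: if $Q_E$ is not an atom but $E$ is ball-minimal, then $\M_E$ is minimal but not \irrL, so Lemma \ref{l:trans}\eqref{it:lt4} forces a \emph{direct sum} splitting $Q_E=Q_{E^1}\oplus Q_{E^2}$ (after a fixed unitary change of basis on $\C^e$), whence $\pi(\hair\cB_E)\subseteq(\C^{e_1}\oplus 0)\cup(0\oplus\C^{e_2})$, a set that visibly contains no hyperbasis. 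The key is that non-atomicity of $Q_E$ in the ball-minimal case upgrades to a direct sum, not merely a product.
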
 

\begin{proof} Part \eqref{i:hairspans} is established in Proposition
\ref{p:hairspans},
while \eqref{i:hairhyperspans} is Proposition \ref{p:hairhyperspans}.
\end{proof}

\subsection{A Nullstellensatz}
\label{sec:null}
 Theorem \ref{t:pencil-ball-alt} uses the following Nullstellensatz
 whose proof depends upon Theorem \ref{t:hairspans}.

\begin{prop}
 \label{p:HKVpBergman}
   Suppose $E=(E_1,\dots,E_g)\in M_{d\times e}(\C)^g$ is \bmin
   and $V\in \pxx^{\dd \times e}$
   is a (rectangular) matrix polynomial.
   If $V$ vanishes on $\widehat{\partial \cB_E};$ that is $V(X)\gamma=0$
  whenever $(X,\gamma)\in \widehat{\partial \cB_E}$, then $V=0.$
\end{prop}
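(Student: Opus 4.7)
The plan is to use Theorem \ref{t:hairspans}\eqref{i:hairspans} as the main input. First I would reduce to the case $V = (V_1,\dots,V_e) \in \pxx^{1\times e}$ by considering each row of $V$ in isolation, since $V(X)\gamma = 0$ is equivalent to the vanishing of each row's action on $\gamma$. The central claim is then: for every $u \in \pi(\widehat{\partial^1\cB_E}) \subseteq \C^e$, the scalar polynomial $V\cdot u := \sum_j V_j u_j \in \pxx$ vanishes identically on matrix tuples of every size. Granting this, Theorem \ref{t:hairspans}\eqref{i:hairspans} combined with the $\C$-linearity of $u\mapsto V\cdot u$ immediately forces $V\cdot u = 0$ for every $u\in\C^e$, so each $V_j = 0$.

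To establish the central claim I fix $u = \pi(v)$ for some $(X_0,v) \in \widehat{\partial^1\cB_E}(n_0)$ and an arbitrary target $Y \in \matn^g$, which may be taken strictly interior to $\cB_E$ by rescaling. The idea is to construct a block upper-triangular enlargement
\[
Z = \begin{pmatrix} X_0 & W \\ 0 & Y\end{pmatrix} \in M_{n_0+n}(\C)^g
\]
for a suitable $W$, together with a kernel vector $\xi = \xi_1 \oplus (u\otimes h)$ of $\Qre_E(Z,Z^*)$ for a prescribed $h\in \C^n$. Because any free polynomial evaluated at a block upper-triangular tuple is again block upper-triangular, with diagonal blocks obtained by evaluating at the diagonal components, $V(Z)$ has $V(X_0)$ and $V(Y)$ as its diagonal blocks; the lower half of $V(Z)\xi = 0$ is therefore $V(Y)(u\otimes h) = (V\cdot u)(Y)\cdot h = 0$. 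Letting $h$ vary over $\C^n$ gives $(V\cdot u)(Y) = 0$, and then letting $Y$ vary yields $V\cdot u \equiv 0$.

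The main obstacle is the construction of $W$. A Schur-complement analysis of
\[
\Qre_E(Z,Z^*) = \begin{pmatrix} \Qre_E(X_0,X_0^*) & -\Lambda_E(X_0)^*\Lambda_E(W) \\ -\Lambda_E(W)^*\Lambda_E(X_0) & \Qre_E(Y,Y^*) - \Lambda_E(W)^*\Lambda_E(W)\end{pmatrix},
\]
with the lower-right block positive definite for small $W$ (since $Y$ is interior), encodes both the kernel condition and the boundary condition $Z\in\partial\cB_E$ as an algebraic system in $W$: roughly, the equations $\Lambda_E(X_0)^*\Lambda_E(W)(u\otimes h) = 0$ and $\Lambda_E(W)^*\Lambda_E(W)(u\otimes h) = \Qre_E(Y,Y^*)(u\otimes h)$, together with Schur positivity to keep $Z\in\cB_E$. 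A first-order perturbative analysis near $W = 0$ collapses to $\xi_2 = 0$, so a non-perturbative solution is needed. Establishing solvability of the above system in the parameters $(X_0, v; Y, u, h)$---and exploiting the freedom provided by Theorem \ref{t:hairspans}\eqref{i:hairspans} to choose $(X_0,v)$ flexibly with $u = \pi(v)$---is the technical heart of the argument.
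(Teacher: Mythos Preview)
Your reduction to rows and your use of the hair--spanning property from Theorem~\ref{t:hairspans}\eqref{i:hairspans} are both sound, and the block--triangular calculus you set up is correct: if $(Z,\xi)\in\widehat{\partial\cB_E}$ with $Z=\left(\begin{smallmatrix}X_0&W\\0&Y\end{smallmatrix}\right)$ and $\xi_2=u\otimes h$, then the lower block of $V(Z)\xi=0$ is indeed $(V\cdot u)(Y)h=0$. The gap is exactly where you place it, and it is genuine: you do not construct $W$. In fact the specific $(X_0,v)$ handed to you by the hair condition can be useless for this purpose. Take $g=d=e=1$, $E=1$, so $\cB_E$ is the operator unit ball, and take $(X_0,v)=(1,1)\in\widehat{\partial^1\cB_E}(1)$, giving $u=1$. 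For any $Y$ and any row $W$, the $(1,1)$ entry of $I-Z^*Z$ vanishes, so positive semidefiniteness forces $W=0$; then for $Y\in\interior(\cB_E)$ the kernel of $I-Z^*Z$ is $\C\oplus\{0\}$, i.e.\ $\xi_2=0$ necessarily. So you would need to \emph{prove} that for every $u$ in a spanning subset of $\pi(\widehat{\partial^1\cB_E})$ one can pass to a better representative $(X_0,v)$ with $\pi(v)=u$ for which the system in $W$ is solvable for arbitrary $(Y,h)$. You hint at ``choosing $(X_0,v)$ flexibly'' but give no mechanism, and your written kernel equations suppress the unknown $\xi_1$ in the second row, so even the system to be solved is not fully stated.

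The paper bypasses this obstacle with a different construction. Instead of block enlargements, it evaluates $V$ at tuples $\beta\cdotb S$, where $S$ is the tuple of truncated shifts on the Fock space $\sF_M$ with $M=\deg V$ and $\beta\in M_g(M_r(\C))$. Because $S^w[\gamma\otimes 1]=\gamma\otimes\vecd{w}$, the expansion of $F(\beta\cdotb S)[\gamma\otimes 1]$ separates cleanly by word length (Lemma~\ref{l:combinatorics}), and vanishing of the degree--$N$ part becomes the linear system $B(\beta,N)y=0$ with $y$ built from the coefficients $F^s_w$, $|w|=N$. One then needs boundary spanning data $(\beta^a\cdotb S,\gamma^a\otimes 1)$ with each $B(\beta^a,N)$ invertible; these are produced from the hair spanning set of Proposition~\ref{p:hairspans} by a small perturbation of $\beta$ (using the generic invertibility of $B(\beta,N)$, Lemma~\ref{l:stillinvertible}) together with standard eigenvector continuity (Lemma~\ref{l:preserve}) to keep the kernel vector close. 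This trades your unsolved algebraic system for $W$ for a routine perturbation--plus--genericity argument.
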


\begin{proof}
 See Subsection \ref{sec:aux}.
\end{proof}

\subsection{An overview of the proof of Theorem~\ref{t:pencil-ball-alt}}
We are now in a position to convey, in broad strokes, an outline of the 
proof  of Theorem \ref{t:pencil-ball-alt}.  The conversely  direction
is an immediate consequence of Proposition  \ref{prop:properobvious} 
(see Corollary~\ref{c:pba-converse}) of Section 
 \ref{s:prelims}. Its proof reflects the fact that convexotonic maps are bianalytic
between certain special spectrahedral pairs. Proposition \eqref{prop:properobvious} is also the  starting
 point for the proof of the more challenging converse.  Given the tuple  $A,$ let
 $J=(J_1,\dots, J_h)$ denote a basis for the algebra  spanned by
  $A$ with $J_j=A_j,$ for $1\le j\le g.$ Proposition \ref{prop:properobvious} says 
 that $\cD_J$ and $\cB_J$ are bianalytic via the convexotonic
 map  associated 
  to  the convexotonic $h$-tuple $\Xi$ determined by the tuple $J$ via  
  equation \eqref{e:JgetsXi} (with $J$ in place of $A$). Starting with the free bianalytic map $f:\cB_E\to \cD_A$, observe that
 $G=\varphi\circ \iota \circ f:\cB_E\to \cB_J$ is a free proper map  satisfying $G(0)=0$ and
  $G^\prime(0) = \begin{pmatrix} I_g & 0_{g\times(h-g)} \end{pmatrix},$ 
 where $\iota:\cD_A\to \cD_J$ is the inclusion,  since 
  $\varphi(0)=0$ and $\varphi^\prime(0)=I_h.$   An argument  
  that uses Proposition \ref{p:HKVpBergman} produces a representation for $G$
   that can be thought of as an analog of the Schwarz Lemma (see equation \eqref{e:agr2}).
   In simple cases,
\begin{equation}
 \label{e:bs}
  G(x) = \begin{pmatrix} x & 0 \end{pmatrix}
\end{equation}
 from which it follows that the $g$-tuple $\widehat{\Xi}\in M_g(\C)^g$ defined by
\[
 (\widehat{\Xi}_j)_{s,t}  = (\Xi_j)_{s,t}, \ \ 1\le j,s,t \le g
\]
 is convexotonic and thus $A$  spans an algebra. 
 Hence  $h=g,$ the map $\varphi$ (and hence $\varphi^{-1}$) is
 convexotonic  and   $f=\varphi^{-1}.$
  In general  only a weaker version of equation \eqref{e:bs} holds, an inconvenience 
 that does not conceptually
 alter the argument, but one that does make the proof more technical.

\section{Free rational maps and convexotonic maps}
\label{s:prelims}
In this section we review the notions of a free set and free rational function 
and provide further background on  free functions and mappings. In particular,
convexotonic maps are seen to be free rational mappings. In Subsection \ref{ssec:ct} we show
how algebras of matrices give rise to convexotonic bianalytic maps between free spectrahedra.
See Theorem \ref{t:1point1}.

\subsection{Free sets, free analytic functions and mappings}
Let $M(\C)^g$ \index{$M(C)^g$} denote the sequence $(M_n(\C)^g)_n$.  
A \df{subset} $\Gamma$ of $M(\C)^g$ is a sequence $(\Gamma_n)_n$ where 
$\Gamma_n \subseteq M_n(\C)^g$. (Sometimes we  write $\Gamma(n)$ in place of $\Gamma_n.$)  
The subset $\Gamma$ is a \df{free set} if it is closed under direct sums and simultaneous unitary similarity.
Examples of such sets include spectraballs and free spectrahedra introduced above.
 We say the free set $\Gamma=(\Gamma_n)_n$ is \df{open} if each $\Gamma_n$ is open.
 Generally adjectives are applied level-wise to free sets unless noted otherwise.

A \df{free function}   $f:\Gamma\to M(\C)$ is a sequence of functions
 $f_n:\Gamma_n\to M_n(\C)$ that \df{respects intertwining}; that is,
 if $X\in\Gamma_n$, $Y\in\Gamma_m$, $T:\C^m\to\C^n$,
  and
 \[
  XT=(X_1T,\dots, X_gT)
   =(T Y_1,\dots, TY_g)=T Y,
 \]
  then $f_n(X) T =  T f_m(Y)$. 
In the case $\Gamma$ is open, $f$ is \df{free analytic} if each $f_n$
 is analytic in the ordinary sense.  
We refer the reader to   \cite{Voi04,KVV14,AM14,AM15,HKMforbill,HKM11a}
for a fuller discussion of free sets and functions.
For further results, not already cited, on 
free bianalytic and proper free analytic maps see
\cite{Po2,MS,KS,HKMSlinglend,HKM11b,SSS18}
and the references therein.

A \df{free mapping} $p:\Gamma\to M(\C)^h$ is a tuple
 $p=\begin{pmatrix} p^1 & p^2 & \cdots & p^h\end{pmatrix}$
 where each $p^j:\Gamma \to M(\C)$ is a free function.
 The free mapping $p$ is \df{free analytic} if each $p^j$ is a free analytic
  function.    If $h=g$ and $\Delta \subset M(\C)^g$ is a free set,
  then $p:\Gamma\to \Delta$ is \df{bianalytic} if 
  $p$ is analytic and $p$ has an inverse, that is 
  necessarily free and analytic,  $q:\Delta\to \Gamma$.

\subsection{Free rational functions and mappings}
Based on the results of \cite[Theorem 3.1]{KVV09}
and \cite[Theorem 3.5]{Vol17} a \df{free rational function regular at $0$}
 can, for the purposes of this article,
 be defined with minimal overhead as an expression of the form
\begin{equation}
\label{e:realizer}
r(x)= c^* \big(I-\Lambda_S(x)\big)^{-1} b,
\end{equation}
where, for some positive integer $s$, we have
 $S\in M_s(\C)^g$ and  $b,c\in\C^s.$
The expression $r$ 
 is known as a \df{realization}.\index{Realization} Realizations are easy
to manipulate and a powerful tool as developed in the series of papers
 \cite{BGM05,BGM06a,BGM06b} of  Ball-Groenewald-Malakorn;
see also \cite{Coh,BR}.
The realization $r$ is  evaluated in the obvious fashion on 
a tuple $X\in M_n(\C)^g$ as long as $I-\Lambda_S(X)$ is invertible. 
Importantly, free rational functions are  free analytic.
 
Given a tuple $T\in M_k(\C)^g$, let \index{$\mathscr{I}$}
\begin{equation}
\label{e:msI}
\sI_T = \{X\in M(\C)^g: \det(I-\Lambda_T(X))\ne 0\}.
\end{equation}
A realization 
$ \tilde{r}(x) = \tilde{c}^* (I-\Lambda_{\widetilde{S}})^{-1}\tilde{b}$
is \df{equivalent} to the realization $r$ as in \eqref{e:realizer} if $r(X)=\tilde{r}(X)$ for
$X\in \sI_{S}\cap \sI_{\tilde{S}}$.  A free rational function is an equivalence
class of realizations and 
we identify 
$r$ with its equivalence class and refer to it as a free rational function.
 The  realization \eqref{e:realizer} is \df{minimal}
if $s$ is the minimum size among all realizations equivalent to $r$. 
By \cite{KVV09,Vol17}, if $r$ is minimal and $\tilde{r}$ is equivalent
to $r$, then  $\sI_S \supset \sI_{\widetilde{S}}$.  Moreover, the results
in \cite{Vol17} explain precisely, in terms of evaluations,
the sense in which $\sI_S$ deserves to be called
 the \df{domain of the free rational function} $r,$ 
 denoted $\dom(r)$.

A free polynomial $p$ is a free rational function regular at $0$ and, as is well
known, its domain is $M(\C)^g$. 
 If $f$ and $g$ are free rational functions regular at $0$, then
so are $f+g$ and $f g.$ Moreover, $\dom(f+g)$ and $\dom(fg)$ both contain
$\dom(f)\cap \dom(g)$ as a consequence of \cite[Theorem 3.10]{Vol18}.
Free rational functions regular at $0$ are determined by their evaluations near $0$; that
is if $f(X)=g(X)$ in some neighborhood of $0$ in $\dom(f)\cap \dom(g)$, then $f=g$.
In what follows, we often omit {\it regular at $0$} when it is understood from context.
We refer the reader to \cite{Vol17,KVV09} for a fuller discussion of
the domain of a free rational function.

A \df{free rational mapping} $p$ is a tuple of rational functions $p=\begin{pmatrix} p^1 & \cdots & p^g\end{pmatrix}.$ 
 The domain of $p$ is the intersection of the domains
of the $p^j$.
By \cite[Proposition 1.11]{AHKM18},
if $r$ is a free rational mapping  with no singularities on 
a bounded free spectrahedron
$\cD_A$, then
there is a %
$t>1$ such that $r$ has no singularities on $t\cD_A$.

\subsection{Algebras and convexotonic maps}\label{ssec:ct}
Theorem \ref{t:1point1} below  is an expanded version of \cite[Theorem 1.1]{AHKM18}.
To begin we discuss a sufficient condition
for a tuple $X\in M_n(\C)^g$ to lie in $\dom(p)$, the domain of a convexotonic mapping
\[
 p=\begin{pmatrix} p^1 & \cdots & p^g\end{pmatrix} 
  = x(I-\Lambda_\Xi(x))^{-1}.
\]
Since
\[
p^j = %
	\sum_{k=1}^g x_k \left [ e_k^* (I-\Lambda_{\Xi}(x))^{-1} e_j \right],
\]
it follows that $\sI_{\Xi}\subset \cap \dom(p^j) = \dom(p)$. 
Now suppose $R\in M_N(\C)^g$ and $f_{k,s,a,b},g_{k,s,a,b}, h_{k} \in \pxx$  and let
$r^k$ denote the free rational function 
\[
r^k(x) =  %
  \sum_{s,a,b} f_{k,s,a,b}(x) [e_a^*\left (I-\Lambda_R(x)\right )^{-1}e_b]\, g_{k,s,a,b}(x) + h_{k}.
\]
If $r^j=p^j$ in some neighborhood of $0$ lying in $\sI_{\Xi} \cap \sI_{R}$,
then $r^j$ and $p^j$ represent the same free rational function. In particular, 
$\sI_R \subset \dom(p^j)$ and therefore $\sI_{R} \subset \dom(p)$.

Let $\exterior(\cD_B)$ denote the sequence \index{$\exterior$}
$(\exterior(\cD_B(n)))_n$ where $\exterior(\cD_B(n))$ is the complement of
$\cD_B(n)$. Likewise let $\partial \cD_B(n)$ denote the boundary of
$\cD_B(n)$ and let $\partial \cD_B$  denote the sequence 
$(\partial \cD_B(n))_n$. \index{$\partial \cD_B$}

\begin{thm}
\label{t:1point1}
Suppose $\Aalt,\Balt\in M_r(\C)^g$ are linearly independent,
$U\in M_r(\C)^g$ is unitary and  $\Balt=U\Aalt$.  If there exists
a tuple $\Xi\in M_g(\C)^g$ such that
\[
 \Aalt_\ell (U-I)\Aalt_j = \sum_{s=1}^g (\Xi_j)_{\ell,s} \Aalt_s,
\]
then $\Xi$ is convexotonic and the convexotonic maps
$p$ and $q$ associated to $\Xi$ are 
bianalytic maps between $\cD_\Aalt$ and $\cD_{\Balt}$ in the following sense.
\begin{enumerate}[\rm (a)]
\item \label{i:11a}
   $\interior(\cD_\Aalt)\subset \dom(p)$, $\interior(\cD_{\Balt})\subset \dom(q)$; and
  $p:\interior(\cD_\Aalt)\to \interior(\cD_{\Balt})$ is bianalytic.
\item \label{i:11d}
 If $X\in \exterior(\cD_\Aalt)\cap \dom(p)$, then $p(X)\in \exterior(\cD_{\Balt})$.
\item \label{i:11e}
   If $X\in \partial \cD_\Aalt \cap \dom(p)$, then $p(X) \in \partial \cD_{\Balt}$.
\item  \label{i:11b}
   If $\cD_{\Balt}(1)$ is bounded, then $\cD_\Aalt\subset \dom(p)$.
\end{enumerate}
\end{thm}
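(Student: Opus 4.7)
The plan is to reduce Theorem~\ref{t:1point1} to a single congruence identity
\[
M(X)^*\,\Lre_\Balt(p(X))\,M(X) \;=\; \Lre_\Aalt(X), \qquad M(X) := I - \Lambda_R(X),
\]
where $R_j := (U-I)\Aalt_j$. First, to verify that $\Xi$ is convexotonic, I would introduce the associative product $a\star b := a(U-I)b$ on $M_r(\C)$. The hypothesis says that $\spann\{\Aalt_1,\dots,\Aalt_g\}$ is closed under $\star$ with structure constants $(\Xi_j)_{\ell,s}$ in the basis $\{\Aalt_k\}$. Because $\Aalt$ is linearly independent, expanding $(\Aalt_k\star\Aalt_\ell)\star\Aalt_j = \Aalt_k\star(\Aalt_\ell\star\Aalt_j)$ in this basis yields exactly the convexotonic relation $\Xi_k\Xi_j = \sum_s(\Xi_j)_{k,s}\Xi_s$; this is essentially Lemma~\ref{l:secret}.

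Next I would derive the congruence identity. An easy induction on $n$ using the hypothesis gives
\[
\Aalt_{l_0}R_{l_1}\cdots R_{l_n} \;=\; \sum_s \bigl[\Xi_{l_1}\cdots\Xi_{l_n}\bigr]_{l_0,s}\,\Aalt_s,
\]
and summing the corresponding tensors $\Aalt_{l_0}R_{l_1}\cdots R_{l_n}\otimes X_{l_0}\cdots X_{l_n}$ over indices, while recognizing the inner block sums as the degree-$(n{+}1)$ term of $X(I-\Lambda_\Xi(X))^{-1}$, produces the identity of free rational functions
\[
\Lambda_\Aalt(p(X)) \;=\; \Lambda_\Aalt(X)\,M(X)^{-1},
\qquad \Lambda_\Balt(p(X)) \;=\; (U\otimes I)\Lambda_\Aalt(p(X)) \;=\; \Lambda_\Balt(X)\,M(X)^{-1}.
\]
Linear independence of $\Aalt$ lets one recover each $p^j$ from $\Lambda_\Aalt\circ p$, so this exhibits a realization of $p$ with domain $\supseteq \sI_R$; in particular $\sI_R\subseteq\dom(p)$. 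Conjugating $\Lre_\Balt(p(X))$ by $M(X)$ and expanding, the unitarity of $U$ gives $\Lambda_\Balt^*\Lambda_\Balt = \Lambda_\Aalt^*\Lambda_\Aalt$, and straightforward bookkeeping shows every occurrence of $\Lambda_\Balt$, $\Lambda_\Balt^*$, and the mixed quadratic terms cancels, leaving exactly $I + \Lambda_\Aalt(X) + \Lambda_\Aalt(X)^* = \Lre_\Aalt(X)$. Applying the whole argument to the symmetric data $(\Balt,\Aalt,U^*,-\Xi)$, whose hypothesis still holds because $\Balt_\ell(U^*-I)\Balt_j = -\sum_s(\Xi_j)_{\ell,s}\Balt_s$, produces the analogous congruence for $q(Y) = Y(I+\Lambda_\Xi(Y))^{-1}$.

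With the identity in hand, parts (a)--(d) fall out. For (a), if $X\in\interior(\cD_\Aalt)$ and $M(X)v=0$, the identity rewritten as $M^*M + M^*\Lambda_\Balt + \Lambda_\Balt^*M = \Lre_\Aalt$ pairs with $v$ to give $\langle \Lre_\Aalt(X)v,v\rangle = 0$, contradicting positivity; so $M$ is invertible on $\interior(\cD_\Aalt)\subseteq\sI_R\subseteq\dom(p)$, and the congruence forces $\Lre_\Balt(p(X))\succ 0$. Combining with the symmetric assertion for $q$ and the already known rational inversion $p\circ q = q\circ p = \mathrm{id}$ from \cite[Proposition 6.2]{AHKM18} yields bianalyticity. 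For (b), congruence by an invertible matrix preserves signature, so on $\sI_R$ the identity transfers any negative eigenvalue of $\Lre_\Aalt(X)$ directly to $\Lre_\Balt(p(X))$. For (c), one approximates $X\in\partial\cD_\Aalt\cap\dom(p)$ by interior points to get $p(X)\in\cD_\Balt$, and uses that the inverse $q$ carries $\interior(\cD_\Balt)$ into $\interior(\cD_\Aalt)$ to exclude $p(X)\in\interior(\cD_\Balt)$. For (d), boundedness of $\cD_\Balt(1)$ forces $\cD_\Balt$ bounded (cf.\ \cite[Proposition 2.6(2)]{HKM}), so $p$ is bounded on $\interior(\cD_\Aalt)$; the exceptional set $\{X\in\cD_\Aalt : \det M(X)=0\}$ is a proper algebraic subvariety of $\cD_\Aalt$, and a Riemann-type removable-singularity argument for free rational maps extends $p$ analytically across it. The main obstacle is the congruence identity itself: the induction producing $\Lambda_\Aalt(p(X)) = \Lambda_\Aalt(X)M^{-1}$ is where the convexotonic structure concentrates, and tracking the unitarity-driven cancellations that isolate exactly $\Lre_\Aalt(X)$ requires careful bookkeeping.
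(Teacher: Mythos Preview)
Your framework via the congruence $M(X)^*\Lre_\Balt(p(X))M(X)=\Lre_\Aalt(X)$ with $M=I-\Lambda_R$ is exactly the paper's (Proposition~\ref{prop:Nstatz}), and your treatments of items~\eqref{i:11d} and~\eqref{i:11e} are essentially the same. Your argument for item~\eqref{i:11a} is in fact \emph{cleaner} than the paper's: you observe that $M^*M+M^*\Lambda_\Balt+\Lambda_\Balt^*M=\Lre_\Aalt$ is a polynomial identity valid for all $X$, so pairing with a putative null vector of $M(X)$ immediately contradicts $\Lre_\Aalt(X)\succ0$. The paper instead runs a pole analysis on the one-variable function $z\mapsto\Lambda_\Balt\bigl(p((1-z)X)\bigr)$ to conclude $M(X)$ is invertible; your route bypasses that entirely.

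There is, however, a genuine gap in item~\eqref{i:11b}. The ``Riemann-type removable-singularity argument'' does not apply as stated: you only know $p$ is bounded on $\interior(\cD_\Aalt)$, not on a full complex neighborhood of a boundary point $Z$, so the classical extension theorem across a thin analytic set is unavailable. What boundedness of $\cD_\Balt$ actually gives is that the one-variable rational function $t\mapsto p(tZ)$ is bounded on $(0,1)$ and hence has a limit $L$ at $t=1$; but the existence of this radial limit is not yet $Z\in\dom(p)$. You still have to show $M(Z)$ is invertible. One fix: pass to the limit in the polynomial identities $(I+\Lambda_\Balt(p(tZ)))M(tZ)=I+\Lambda_\Aalt(tZ)$ and $\Lambda_\Balt(p(tZ))M(tZ)=\Lambda_\Balt(tZ)$; if $M(Z)\gamma=0$ the first forces $\Lambda_\Aalt(Z)\gamma=-\gamma$, whence $\Lambda_\Balt(Z)\gamma=(U\otimes I)\Lambda_\Aalt(Z)\gamma=-(U\otimes I)\gamma$, while the second forces $\Lambda_\Balt(Z)\gamma=0$, giving $\gamma=0$. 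The paper argues differently: from $M(tZ)\gamma=(1-t)\gamma$ and the congruence it extracts $(1-t)\langle\Lre_\Balt(p(tZ))\gamma,\gamma\rangle=1$, which contradicts boundedness of $p(tZ)$ as $t\to1^-$. Either closes the gap, but your sketch supplies neither.
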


Before taking up the proof of Theorem \ref{t:1point1}, 
we  prove the following proposition and collect a few of its consequences
that  will be used  in the sequel.

\begin{prop}[{\cite[Proposition 1.3]{AHKM19}}]
\label{prop:properobvious}
  Suppose $J\in M_d(\C)^g$ is linearly independent and spans an algebra with convexotonic tuple $\Xi$
 (as in equation \eqref{e:JgetsXi} with $J$ in place of $A$). Let
  $p=x(I-\Lambda_\Xi(x))^{-1}$ and $q=x(I+\Lambda_\Xi(x))^{-1}$ denote the
 corresponding convexotonic maps.
\begin{enumerate}[\rm (i)]
 \item  \label{i:obv1} 
   $\interior(\cB_J)\subset \dom(p)$
 and $p:\interior(\cB_J)\to \interior(\cD_J).$ 
 \item \label{i:obv4} $\cD_J\subset \dom(q)$ and 
    $q:\interior(\cD_J)\to \interior(\cB_J)$ 
    and  $q(\partial \cD_J)\subset \partial \cB_J$. 
\item \label{i:obv6} $p:\interior(\cB_J)\to\interior(\cD_J)$ and 
  $q:\interior(\cD_J)\to\interior(\cB_J)$ are birational inverses
   of one another.
\item \label{i:obv2}  If $X\in\dom(p)$, but $X\notin \interior(\cB_J),$ then $p(X)\notin\interior(\cD_J)$.
 \item  \label{i:obv3} If $\cD_J$ is bounded, then the domain of $p$ contains $\cB_J$ and
 $p(\partial \cB_J)\subset  \partial \cD_J.$

\item \label{i:obv5}  If  $Y\in \dom(q),$ but $Y\notin \cD_J,$ then $q(Y)\notin \cB_J.$
\end{enumerate}
\end{prop}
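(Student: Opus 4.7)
The plan is to reduce everything to one algebraic transport identity and two Cayley-type calculations. First I would establish
\[
\Lambda_J(p(x)) = \Lambda_J(x)(I-\Lambda_J(x))^{-1},\qquad \Lambda_J(q(x)) = \Lambda_J(x)(I+\Lambda_J(x))^{-1},
\]
by expanding $p(x)(I-\Lambda_\Xi(x))=x$ coordinate-wise, tensoring against $J_j,$ and collapsing the result through the algebra relation $\sum_j(\Xi_i)_{k,j}J_j=J_kJ_i.$ Running the same argument with $\Xi$ in place of $J$ (legitimate because $\Xi$ tautologically satisfies its own convexotonic identity) also yields $\Lambda_\Xi(p(x))=\Lambda_\Xi(x)(I-\Lambda_\Xi(x))^{-1},$ which will be the key to inverting $p.$

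Next I would use these to extend the domains of $p$ and $q.$ Choose functionals $\phi_i\in M_d(\C)^*$ dual to the linearly independent tuple $J_1,\dots,J_g,$ so that $(\phi_i\otimes\operatorname{id})\Lambda_J(Y)=Y_i$ for every tuple $Y.$ Applying $\phi_i\otimes\operatorname{id}$ to the two transport identities produces rational expressions for $p^i$ and $q^i$ whose only denominators are $I\mp\Lambda_J(x);$ by the equivalent-realizations discussion immediately preceding the proposition, this proves $\{X:\det(I-\Lambda_J(X))\ne 0\}\subset\dom(p)$ and $\{X:\det(I+\Lambda_J(X))\ne 0\}\subset\dom(q).$ In particular $\interior(\cB_J)\subset\dom(p).$ For the corresponding half of~\eqref{i:obv4} I would note that $X\in\cD_J$ forces $I+\Lambda_J(X)$ invertible: a nullvector $v$ would give $\langle\Lre_J(X)v,v\rangle=-\|v\|^2<0,$ contradicting positivity of $\Lre_J(X).$

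All mapping statements now reduce to two Cayley-type identities
\[
\Lre_J(p(x)) = (I-\Lambda_J(x)^*)^{-1}\bigl(I-\Lambda_J(x)^*\Lambda_J(x)\bigr)(I-\Lambda_J(x))^{-1}
\]
and
\[
I-\Lambda_J(q(x))^*\Lambda_J(q(x)) = (I+\Lambda_J(x)^*)^{-1}\Lre_J(x)(I+\Lambda_J(x))^{-1},
\]
both of which fall out from the transport identities by a routine common-denominator manipulation. The first shows that $\Lre_J(p(X))$ and $I-\Lambda_J(X)^*\Lambda_J(X)$ share signature, yielding~\eqref{i:obv1} and~\eqref{i:obv2}; the second shows $q(X)\in\cB_J$ iff $X\in\cD_J$ (and similarly for interiors and boundaries), giving~\eqref{i:obv4} and~\eqref{i:obv5}. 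For~\eqref{i:obv6} I would use the $\Xi$-transport identity to get $I+\Lambda_\Xi(p(x))=(I-\Lambda_\Xi(x))^{-1},$ so that $q(p(x))=p(x)(I+\Lambda_\Xi(p(x)))^{-1}=p(x)(I-\Lambda_\Xi(x))=x,$ and symmetrically $p(q(x))=x.$

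The step I expect to be delicate is the boundedness claim in~\eqref{i:obv3}: a point of $\partial\cB_J$ can simultaneously make both $I-\Lambda_J(X)$ and $I-\Lambda_\Xi(X)$ singular, so neither of the explicit rational expressions produced above is guaranteed to cover all of $\cB_J.$ To handle this I would apply \cite[Proposition 1.11]{AHKM18} to $q$: since $q$ is a free rational map regular on the bounded free spectrahedron $\cD_J,$ it is in fact regular on $\interior(t\cD_J)$ for some $t>1.$ The rational inverse $p$ is then regular on the open set $q(\interior(t\cD_J)),$ which by the already-established Cayley identity contains a neighborhood of $\cB_J.$ The inclusion $p(\partial\cB_J)\subset\partial\cD_J$ then follows by continuity together with~\eqref{i:obv1} and the bijection in~\eqref{i:obv6}.
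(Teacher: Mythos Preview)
Your argument is correct and largely parallels the paper's: establish the transport identities $\Lambda_J(p(x))=\Lambda_J(x)(I-\Lambda_J(x))^{-1}$ and $\Lambda_J(q(x))=\Lambda_J(x)(I+\Lambda_J(x))^{-1}$, apply functionals dual to $J$ to enlarge the domains, and read off the mapping claims from Cayley-type congruences. The paper packages the latter as the standalone Lemmas~\ref{lem:inverseok} and~\ref{lem:onehalf}, but the content is the same. Your handling of~\eqref{i:obv6} via the $\Xi$-version of the transport identity is a bit more explicit than the paper's one-line appeal to $p$ and $q$ being formal inverses; both are fine. For~\eqref{i:obv2} and~\eqref{i:obv5} the paper instead argues by contradiction through the already-established bijection, whereas you get them directly from the congruence; again equivalent.

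The only substantive difference is~\eqref{i:obv3}. The paper's argument is direct: if $X\in\partial\cB_J$ and $I-\Lambda_J(X)$ were singular, say $\Lambda_J(X)v=v$, then $\Lambda_J(p(tX))v=\tfrac{t}{1-t}v$ would be unbounded as $t\uparrow 1$, contradicting $p(tX)\in\interior(\cD_J)$ and boundedness of $\cD_J$. Your route through \cite[Proposition~1.11]{AHKM18} also works, but one step is under-justified: the Cayley identity alone does not give $q(\interior(t\cD_J))\supset\cB_J$. What it gives is $q(\cD_J)\subset\cB_J$; to get equality you need that $\cD_J(n)$ is compact (boundedness), so $q(\cD_J(n))$ is a compact set containing the dense subset $q(\interior(\cD_J(n)))=\interior(\cB_J(n))$, whence $q(\cD_J(n))=\cB_J(n)$. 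Once that is said, $\cB_J=q(\cD_J)\subset q(\interior(t\cD_J))\subset\dom(p)$, the last inclusion because $I-\Lambda_J(q(Y))=(I+\Lambda_J(Y))^{-1}$ is invertible whenever $Y\in\dom(q)$, and the boundary claim follows. The paper's argument is shorter and self-contained; yours has the advantage of actually producing an open neighborhood of $\cB_J$ in $\dom(p)$.
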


\begin{lemma}
	\label{lem:inverseok}
	Suppose $F\in M_d(\C)^g$.
	If $I+\Lambda_F(X)+\Lambda_F(X)^* \succeq 0$, then $I+\Lambda_F(X)$
	is invertible. 
\end{lemma}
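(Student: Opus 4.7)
The plan is to reduce the claim to a simple fact about matrices whose Hermitian part is bounded below. Write $M := I+\Lambda_F(X)$, so that
\[
I+\Lambda_F(X)+\Lambda_F(X)^* \;=\; M+M^*-I.
\]
The hypothesis therefore becomes $M+M^* \succeq I$. Since we are in finite dimensions, it suffices to show $\ker M = \{0\}$.

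Suppose toward a contradiction that $Mv=0$ for some vector $v\ne 0$. Then $v^*Mv=0$ and $v^*M^*v = \overline{v^*Mv}=0$, so
\[
v^*(M+M^*)v \;=\; 0.
\]
On the other hand, $M+M^* \succeq I$ gives $v^*(M+M^*)v \geq \|v\|^2 > 0$, a contradiction. Hence $M$ is injective, and therefore invertible.

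The argument is a two-line observation, so there is no real obstacle; the only subtlety is recognizing that $I+\Lambda_F(X)+\Lambda_F(X)^*$ is precisely $M+M^*-I$, after which positivity of the Hermitian part of $M$ together with finite-dimensionality closes the proof.
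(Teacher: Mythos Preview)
Your proof is correct and takes essentially the same approach as the paper: both argue by contradiction, taking a nonzero vector in the kernel of $I+\Lambda_F(X)$ and showing the quadratic form $\langle (I+\Lambda_F(X)+\Lambda_F(X)^*)v,v\rangle$ is then negative (or nonpositive), contradicting the hypothesis. Your reformulation via $M+M^*\succeq I$ is a minor cosmetic difference, not a different method.
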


\begin{proof}
	Arguing the contrapositive, suppose $I+\Lambda_F(X)$ is not invertible. 
	In this case
	there is a unit vector $\gamma$ such that
	\[
	\Lambda_F(X)\gamma = -\gamma.
	\]
	Hence,
	\[
	\langle (I+\Lambda_F(X)+\Lambda_F(X)^*)\gamma,\gamma\rangle
	= \langle \Lambda_F(X)^*\gamma,\gamma\rangle = 
	\langle \gamma,\Lambda_F(X)\gamma\rangle = 
	-1. \qedhere
	\]
\end{proof}

\begin{lem}\label{lem:onehalf}
	Let $T\in M_d(\C)$. Then
	\begin{enumerate}[\rm (a)]
		\item \label{i:oha}  $I+T+T^*\succeq0$ if and only if $I+T$ is invertible and
		$\|(I+T)^{-1}T\|\leq1$;
		\item \label{i:ohb} $I+T+T^*\succ0$ if and only if
		$I+T$ is invertible and $\|(I+T)^{-1}T\|<1$.
               \item \label{i:ohc} If $\|T\|<1$, then $I-T$ is invertible and
                     $I+(I-T)^{-1}T +  \left ( (I-T)^{-1}T\right )^*\succ 0.$
                \item \label{i:ohd} If $\|T\|=1$ and $I-T$ is invertible, then 
                 $I+(I-T)^{-1}T +  \left ( (I-T)^{-1}T\right )^*$ is 
                   positive semidefinite and singular. 
	\end{enumerate}
\end{lem}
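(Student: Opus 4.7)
The plan is to prove all four parts via two \emph{sandwich identities} that relate $I+T+T^{*}$ (respectively $I-T^{*}T$) to the relevant expression by conjugation with an invertible matrix.

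For (a) and (b), for the forward implications I would first invoke Lemma \ref{lem:inverseok} to secure invertibility of $I+T$, so that $S:=(I+T)^{-1}T$ makes sense. The key calculation is to verify the identity
\begin{equation*}
(I+T)^{*}(I - S^{*}S)(I+T) \;=\; I+T+T^{*},
\end{equation*}
which is a direct expansion using $(I+T)S = T$ and cancellation of the $T^{*}T$ terms. Since conjugation by the invertible matrix $I+T$ preserves both positive semidefiniteness and strict positive definiteness, the identity immediately yields both (a) and (b): $I-S^{*}S\succeq 0$ (resp.\ $\succ 0$) is equivalent to $\|S\|\le 1$ (resp.\ $\|S\|<1$) by the usual characterization of operator norm. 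The reverse implications are read off the same identity, with invertibility of $I+T$ assumed outright.

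For (c) and (d), set $R:=(I-T)^{-1}T$. From $(I-T)(I+R)=I$ one obtains $R=(I-T)^{-1}-I$, and similarly $R^{*}=(I-T^{*})^{-1}-I$. A short expansion then gives the analogue
\begin{equation*}
(I-T^{*})(I + R + R^{*})(I-T) \;=\; I - T^{*}T.
\end{equation*}
With this in hand, (c) follows because $\|T\|<1$ forces $I-T$ invertible (Neumann series) and $I-T^{*}T\succ 0$ (since $\|T^{*}T\|=\|T\|^{2}<1$); conjugation by the invertible $(I-T)^{-1}$ transfers strict positivity to $I+R+R^{*}$. For (d), $\|T\|=1$ gives $I-T^{*}T$ positive semidefinite and singular, and, with $I-T$ assumed invertible, the same conjugation preserves both properties, delivering the claim.

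The main ``obstacle'' is essentially nil: once the two sandwich identities are written down, the rest is bookkeeping. The identities themselves have Cayley-transform flavor, and the only subtle point to flag is the appeal to Lemma \ref{lem:inverseok} in the forward directions of (a) and (b), which is needed before one can even write $(I+T)^{-1}T$.
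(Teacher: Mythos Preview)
Your proposal is correct and follows essentially the same approach as the paper: both rely on conjugation identities relating $I+T+T^{*}$ (resp.\ $I-T^{*}T$ or $I-TT^{*}$) to $I-S^{*}S$ (resp.\ $I+R+R^{*}$) via the invertible matrix $I+T$ (resp.\ $I-T$). The only cosmetic difference is that the paper uses $I-SS^{*}$ and $I-TT^{*}$ where you use $I-S^{*}S$ and $I-T^{*}T$, which is immaterial since both characterize the operator norm; your explicit flagging of Lemma~\ref{lem:inverseok} for the forward direction of (a) and (b) is in fact slightly more careful than the paper's presentation.
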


\begin{proof}
	Item \eqref{i:oha} follows from the chain of equivalences,
	\[
	\begin{split}
	\|(I+T)^{-1}T\|\leq1 \quad & \iff
	\quad 
	I- \big((I+T)^{-1}T\big)\big((I+T)^{-1}T\big)^*\succeq0
	\\
	& \iff \quad 
	I- (I+T)^{-1}T T^*(I+T)^{-*}\succeq0  \\
	& \iff \quad
	(I+T)(I+T)^* - TT^*\succeq0 \\
	& \iff \quad
	I+T+T^*\succeq0.
	\end{split}
	\]
	The proof of item \eqref{i:ohb}  is the same. 

        The proof of \eqref{i:ohc}
        is routine. Indeed, it is immediate that $I-T$ is invertible and
\[
   I+(I-T)^{-1}T + \left ( (I-T)^{-1}T\right )^*
      = (I-T)^{-1}\, \left (I-TT^*\right) \, (I-T)^{-*} \succ 0.
\]
 The proof of item \eqref{i:ohd} is similar.
\end{proof}

\begin{proof}[Proof of Proposition \ref{prop:properobvious}]
 Compute
\[
\begin{split}
\Lambda_{\JJ}(\rr(x))\, \Lambda_\JJ(x)
& =   \sum_{s,k=1}^g q^s(x) x_k \JJ_s \JJ_k 
= \sum_{j=1}^g  \sum_{s=1}^g q^s(x) \left [ \sum_{k=1}^g x_k (\Xi_k)_{s,j} \right ] \JJ_j \\
& =  \sum_{j=1}^g \sum_{s=1}^g q^s(x) (\Lambda_{\Xi}(x))_{s,j} \JJ_j 
=  \sum_{j=1}^g \sum_{t=1}^g x_t  \left [ \sum_{s=1}^g (I+\Lambda_\Xi(x))^{-1}_{t,s} (\Lambda_{\Xi}(x))_{s,j} \right ] \JJ_j \\ 
& =  \sum_{j=1}^g   \sum_{t=1}^g x_t [(I+\Lambda_\Xi(x))^{-1} \Lambda_\Xi(x)]_{t,j} \JJ_j.
\end{split}
\]
Hence,
\[
\Lambda_\JJ(\rr(x))\, (I+\Lambda_\JJ(x))
=  \sum_{j=1}^g   \sum_{t=1}^g x_t [(I+\Lambda_\Xi(x))^{-1} (I+\Lambda_\Xi(x))]_{t,j} \JJ_j 
=  \Lambda_{\JJ}(x).
\]
Thus, as free (matrix-valued) rational functions regular at $0$,
\begin{equation}
\label{eq:LJr}
\Lambda_{\JJ}(\rr(x)) = (I+\Lambda_{\JJ}(x))^{-1} \, \Lambda_{\JJ}(x)=:F(x).
\end{equation}

Since $J$ is linearly independent, given $1\le k\le g$, 
there is a linear functional $\lambda$ such that 
$\lambda(J_j)=0$ for  $j\ne k$ and $\lambda(J_k)=1$. Applying
$\lambda$ to equation \eqref{eq:LJr}, gives
\begin{equation}
\label{eq:lambdaLJr}
q^k(x) = \lambda(F(x)).
\end{equation}
Since $\lambda(F(x))$ is a free rational function whose domain contains
\[
\mathscr{D} = \{X: I+\Lambda_{\JJ}(X) \mbox{ is invertible}\},
\]
the same is true for $q^k$. (As a technical matter, each side of equation
\eqref{eq:lambdaLJr} is a rational expression. Since they are defined and
agree on a neighborhood of $0$, they determine the same free rational
function. It is the domain of this rational function that contains
$\mathscr{D}.$ See \cite{Vol17}, and also \cite{KVV09}, for full details.)
By Lemma \ref{lem:inverseok}, $\mathscr{D}$ contains $\cD_\JJ,$
(as $X\in \cD_\JJ$ implies $I+\Lambda_{\JJ}(X)$ is invertible). Hence
the domain of the free rational mapping  $\rr$ contains   $\cD_\JJ$.
By Lemma \ref{lem:onehalf} and equation \eqref{eq:LJr}, $\rr$ maps the interior of $\cD_{\JJ}$ into the 
interior of $\cB_{\JJ}$ and the boundary of $\cD_{\JJ}$ into
the boundary of $\cB_{\JJ}$. Thus item \eqref{i:obv4} is proved.

Similarly, 
\begin{equation}
\label{eq:LJp}
 (I-\Lambda_{\JJ}(x))^{-1} \, \Lambda_\JJ(x) =\Lambda_{\JJ}(\pp(x)).
\end{equation}
 Arguing as above shows
 the domain of $\pp$ contains the set
\[
\mathscr{E} =\{X: I-\Lambda_{\JJ}(X) \mbox{ is invertible}\},
\]
which in turn contains $\interior(\cB_{\JJ})$
(since $\|\Lambda_{\JJ}(X)\|<1$ allows for an
application of Lemma \ref{lem:onehalf}). By Lemma \ref{lem:onehalf}
and equation \eqref{eq:LJp},
 $\pp$ maps the interior of
$\cB_{\JJ}$ into the interior of $\cD_{\JJ},$ 
proving item \eqref{i:obv1}.  Since $p$ and $q$ are 
formal rational inverses of one another, it follows 
 from items \eqref{i:obv1} and \eqref{i:obv4}
 that they are inverses of one another as
 maps between $\cD_J$ and $\cB_J,$ proving 
 item \eqref{i:obv6}.   Further, if  $X$ is in the boundary
of $\cB_{\JJ}$, then for $t\in \mathbb C$ and $|t|<1$, 
we have $\pp(tX)\in \interior(\cD_{\JJ})$ and
\[
 \Lambda_J(\pp(tX))= (I-\Lambda_{\JJ}(tX))^{-1} \, \Lambda_\JJ(tX).
\]
Assuming $\cD_{\JJ}$ is bounded, it follows that
  $I-\Lambda_{\JJ}(X)$ is invertible and thus, by Lemma \ref{lem:onehalf},
  $X$ is in the domain
of $\pp$ and $\pp(X)$ is in the boundary of $\cD_{\JJ},$ proving item \eqref{i:obv3}.
 Finally, to prove item \eqref{i:obv2}, suppose $X\notin\interior(\cB_J),$ but
 $p(X)\in \interior(\cD_J).$ By item \eqref{i:obv1}, there is a $Y\in \interior(\cB_J)$
 such that $p(Y)=p(X).$ By item \eqref{i:obv4}, $p(Y)=p(X)\in \dom(q)$ and therefore,
 $Y=q(p(Y))=q(p(X))=X,$ a contradiction. The proof of \eqref{i:obv5} is similar. 
\end{proof}

The converse  portion of Theorem \ref{t:pencil-ball-alt} is an immediate consequence
 of Proposition \ref{prop:properobvious}, stated below as Corollary \ref{c:pba-converse}.

\begin{cor}
\label{c:pba-converse}
 Suppose $E\in M_{d\times e}(\C)^g$ is linearly independent, $r\ge \max\{d,e\}$,
the $r\times r$ matrix  $U$ is unitary and 
\[
 A= U\begin{pmatrix} 0 & E \\0 &0 \end{pmatrix}.
\]
If  there exists a tuple $\Xi\in M_g(\C)^g$ such that equation \eqref{e:JgetsXi}
holds, then $\Xi$ is convexotonic and the associated convexotonic map 
 $p$ is a bianalytic mapping $\interior(\cB_E)=\interior(\cB_A)\to \interior(\cD_A)$.
 Moreover, $\cD_A\subset \dom(q)$ and 
$q(\partial \cD_A)\subset  \partial \cB_A,$ where $q=x(I+\Lambda_\Xi(x))^{-1}$
 is the inverse of $p.$
\end{cor}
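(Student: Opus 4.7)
The plan is to derive Corollary~\ref{c:pba-converse} as an immediate specialization of Proposition~\ref{prop:properobvious} applied with the basis $J = A$ of a finite-dimensional algebra. To invoke that proposition I must verify four statements: (a) $A=(A_1,\dots,A_g)$ is linearly independent; (b) $\spann\{A_1,\dots,A_g\}$ is closed under multiplication, with multiplication table $\Xi$; (c) $\Xi$ is convexotonic; and (d) $\cB_A = \cB_E$.

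Items (a) and (b) come essentially for free. Since $U$ is invertible, the map $F_j \mapsto U F_j$ preserves linear independence, so linear independence of $E$ (hence of the zero-padded tuple $F$) forces linear independence of $A$. Condition (b) is just a restatement of hypothesis \eqref{e:JgetsXi}. For (c), I would expand $A_\ell A_k A_j$ in two ways using associativity of matrix multiplication and \eqref{e:JgetsXi}: on the one hand
\[
(A_\ell A_k) A_j = \sum_{s}(\Xi_k)_{\ell,s}\,A_sA_j = \sum_t [\Xi_k \Xi_j]_{\ell,t}\,A_t,
\]
and on the other
\[
A_\ell (A_k A_j) = \sum_{s}(\Xi_j)_{k,s}\,A_\ell A_s = \sum_t \Bigl(\sum_s (\Xi_j)_{k,s}(\Xi_s)_{\ell,t}\Bigr) A_t.
\]
Linear independence of $A$ then forces $\Xi_k \Xi_j = \sum_s (\Xi_j)_{k,s}\,\Xi_s$, which is exactly the convexotonic identity; this is essentially the content of Lemma~\ref{l:secret}.

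For (d), write $A = UF$ where each $F_j$ is the $r \times r$ matrix with $E_j$ as an upper-right block and zeros elsewhere. Unitarity of $U \otimes I_n$ together with the fact that padding by zero blocks preserves operator norms gives $\|\Lambda_A(X)\| = \|\Lambda_F(X)\| = \|\Lambda_E(X)\|$ for every tuple $X$, so $\cB_A = \cB_E$ at every level.

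With (a)--(d) in place, Proposition~\ref{prop:properobvious} applied to $J = A$ delivers every conclusion of the corollary in one stroke: parts \eqref{i:obv1} and \eqref{i:obv4} yield that $p$ maps $\interior(\cB_A)$ into $\interior(\cD_A)$, that $\cD_A \subset \dom(q)$, and that $q(\partial \cD_A) \subset \partial \cB_A$; part \eqref{i:obv6} identifies $q$ as the two-sided inverse of $p$ on these interiors, completing bianalyticity. Combined with $\cB_A = \cB_E$ from (d), this is precisely the statement of the corollary. I do not anticipate a genuine obstacle: once the algebraic verification (c) is in hand, the corollary is essentially a transcription of Proposition~\ref{prop:properobvious}.
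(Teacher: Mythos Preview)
Your proposal is correct and follows exactly the paper's approach: the paper's proof is the one-liner ``By the definition of $A$ we have $\cB_A=\cB_E$. The rest follows by Proposition~\ref{prop:properobvious},'' and your argument simply spells out the verifications (linear independence of $A$, the algebra/multiplication-table condition, convexotonicity of $\Xi$ via Lemma~\ref{l:secret}, and $\cB_A=\cB_E$) that the paper leaves implicit before invoking that proposition.
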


\begin{proof}
By the definition of $A$ we have $\cB_A=\cB_E$. The rest follows by Proposition \ref{prop:properobvious}.
\end{proof}

In the case $\JJ$ does not span an algebra, we have the following variant
of Proposition \ref{prop:properobvious}. It says that
each free spectrahedron can be mapped properly 
to a bounded spectraball and is used in the proof of 
 Theorem \ref{t:pencil-ball-alt}.
 Recall a mapping  between topological spaces 
is \df{proper} if the inverse image of each  compact sets is compact.
Thus, for free open sets $\cU\subseteq M(\C)^g$ and $\cV\subseteq M(\C)^h$, 
 a free mapping $f:\cU\to \cV$ is proper if each
$f_n :\cU_n\to \cV_n$ is proper. For perspective,
given  subsets $\Omega\subset \C^g$
 and $\Delta \subset \C^h$ (that are not necessarily closed), 
 and a proper  analytic  map $\psi:\Omega\to\Delta$, 
 if   $\Omega\ni z^j \to \partial \Omega$, then $\psi(z^j) \to \partial \Delta.$
 \cite[page 429]{Kra92}.\looseness=-1

\begin{cor}\label{cor:obvious}
Let $A\in M_d(\C)^g$ and assume $A$ is  linearly independent.
Let $C_{g+1},\ldots,C_h\in M_d(\C)$ be any matrices such that 
 the tuple $\JJ=(\JJ_1, \dots, \JJ_{h})=(A_1,\dots,A_g,C_{g+1},\dots,C_h)$ 
is a basis for the algebra  generated by the tuple $A$. 
Let $\Xi\in M_h(\C)^h$ denote the convexotonic tuple associated to $J$, let 
$p:\interior(\cB_J)\to\interior(\cD_J)$ denote the corresponding convexotonic map, 
let $q$ denote the inverse of $p,$  and let 
 $\iota:\interior(\cD_A)\to \interior(\cD_{\JJ})$ denote the inclusion.  
 Then  we have the commutative diagram
\begin{center}
\begin{tikzcd}
& \interior(\cB_J) \ar[dd,"p"',"\cong"]
\\  {} \\
\interior(\cD_A)  
\ar[uur,dashed, "f"]
\ar[r, hook,"\iota"']
& \interior(\cD_J)
\end{tikzcd}
\end{center}
and 
 the mapping
 \begin{equation} \label{e:GG}
\GG(x) = \qq\circ \iota(x)=
  \begin{pmatrix} x_1 & \cdots & x_g & 0 & \cdots &0 \end{pmatrix} 
    \, \Big ( I+\sum_{j=1}^g \Xi_j x_j \Big )^{-1}
    \end{equation}
is (injective)  proper and  
extends analytically to a neighborhood of $\cD_A$.
\end{cor}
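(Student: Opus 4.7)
My plan is to write $G=q\circ\iota$ and derive all three claims (injectivity, properness, analytic extension) from the analogous properties of $q$ and $\iota$, leveraging Proposition~\ref{prop:properobvious} applied to the algebra-spanning tuple $\JJ$. First observe that $\Lre_\JJ(\iota(X)) = \Lre_A(X)$, so $\iota$ is a well-defined map $\interior(\cD_A) \to \interior(\cD_\JJ)$ (and $\cD_A \to \cD_\JJ$) and is obviously injective. Composed with the bianalytic bijection $q : \interior(\cD_\JJ) \to \interior(\cB_\JJ)$ furnished by Proposition~\ref{prop:properobvious}\eqref{i:obv6}, this makes $G$ a well-defined injection with image in the bounded set $\interior(\cB_\JJ)$ (bounded because $\JJ$ is linearly independent).

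For properness at each level $n$, it suffices, since $q_n$ is a homeomorphism, to show that $\iota_n$ is proper. Given compact $L \subseteq \interior(\cD_\JJ(n))$, let $V := \{(Y,0):Y\in M_n(\C)^g\}$ be the closed linear subspace of $M_n(\C)^h$ in which $\iota_n$ takes values, and let $\pi$ be the coordinate projection $V\to M_n(\C)^g$. Then $\iota_n^{-1}(L) = \pi(L\cap V)$ is compact as the continuous image of the compact set $L\cap V$, and applying this with $L = q_n^{-1}(K)$ for compact $K\subseteq\interior(\cB_\JJ(n))$ yields compactness of $G_n^{-1}(K)$.

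For the analytic extension, I would invoke the realization \eqref{eq:lambdaLJr} from the proof of Proposition~\ref{prop:properobvious}, which writes $q^k(y) = \lambda_k\bigl((I+\Lambda_\JJ(y))^{-1}\Lambda_\JJ(y)\bigr)$ for a suitable linear functional $\lambda_k$. Substituting $y=\iota(x)$ collapses $\Lambda_\JJ(\iota(x)) = \Lambda_A(x)$, so $G^k(x) = \lambda_k\bigl((I+\Lambda_A(x))^{-1}\Lambda_A(x)\bigr)$ is a realization of $G^k$ defined and analytic on the open set $\mathscr{D} := \{X : I+\Lambda_A(X)\text{ invertible}\}$, which contains $\cD_A$ by Lemma~\ref{lem:inverseok}; hence $\mathscr{D}$ is an open neighborhood of $\cD_A$ on which $G$ is analytic. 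The main subtlety I anticipate is precisely in this last step: the naive formula $(x,0)\bigl(I+\sum_{j=1}^g \Xi_j x_j\bigr)^{-1}$ may have a strictly smaller domain of evaluation than $\mathscr{D}$, so it is essential to inherit the realization from $q$ directly rather than substitute $\iota(x)$ into the convexotonic expression for $q$.
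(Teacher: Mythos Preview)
Your proposal is correct and follows essentially the same route as the paper: verify $\Lre_\JJ(\iota(X))=\Lre_A(X)$ so $\iota$ is a well-defined injection, show $\iota$ is proper at each level, invoke Proposition~\ref{prop:properobvious} for the properties of $q$, and compose. Your properness argument for $\iota$ via $\iota_n^{-1}(L)=\pi(L\cap V)$ is a clean variant of the paper's sequential-compactness argument, and your treatment of the analytic extension is actually more explicit than the paper's: the paper simply notes that $\dom(q)\supset\cD_\JJ$ (from Proposition~\ref{prop:properobvious}\eqref{i:obv4}) and lets the reader infer that $\iota^{-1}(\dom(q))$ is the required open neighborhood of $\cD_A$, whereas you unpack this to the set $\{X:I+\Lambda_A(X)\text{ invertible}\}$ directly. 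Your closing remark about the domain subtlety is well taken and is exactly the content of the realization argument in the proof of Proposition~\ref{prop:properobvious}.
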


\begin{proof}
 By Proposition \ref{prop:properobvious}, $p:\interior(\cB_J)\to \interior(\cD_J)$
 is birational and the domain of its inverse $q$ contains $\cD_J$ and 
 maps $\partial \cD_J$ into $\partial \cB_J.$ In particular $q$ is proper.

 Given $X\in M(\C)^g$,  letting $Y=\begin{pmatrix} X& 0 \end{pmatrix}$,
\[
 \Lambda_J(Y)
 = \sum_{j=1}^h J_j \otimes Y_j = \sum_{j=1}^g A_j\otimes X_j.
\]
 Hence $\LRE_J(\begin{pmatrix} X& 0 \end{pmatrix}) = \LRE_A(X)$
 and it follows that $X\in \interior(\cD_A)$ if
 and only if $Y\in \interior(\cD_J)$.
 Hence, we obtain a mapping $\iota:\interior(\cD_A)\to \interior(\cD_J)$
 defined by $\iota(X)=Y$.

 Fix $m\in\mathbb N$ and  suppose $K\subset \interior(\cD_J(m))$ is compact and let 
  $\backK=\iota^{-1}(K)\subset \cD_A(m)$.
 If   $(X^n)$ is a sequence from $\backK$, then 
  $Y^n = \begin{pmatrix} X^n& 0 \end{pmatrix}$ is  a sequence
 from $K$. Since $K$ is compact,  $(Y^n)_n$ has a subsequence $(Y^{n_j})_j$
 that converges to some $Y\in K$. It follows that 
  $Y=\begin{pmatrix} X& 0 \end{pmatrix}\in K\subset \interior(\cD_J)$ 
 for some $X\in \backK$. Hence $(X^{n_j})_j$ converges to $X$
 and we conclude that $\backK$ is compact. Thus $\iota$ 
 is proper.  Since $q$ is also proper, $f=q\circ \iota$ is too. 
 Letting $z=(z_1,\dots,z_{h})$ denote an $h$ tuple of freely noncommuting 
indeterminates,
\[
\qq(z) = z (I+\Lambda_{\Xi}(z))^{-1}
\]
and thus $f$ takes the form of equation \eqref{e:GG}.
\end{proof}

\subsection{Proof of Theorem \ref{t:1point1}}
\begin{lemma}
\label{l:secret} 
   Suppose $\FG\in M_{d\times e}(\C)^g$  
 is linearly independent,
 $C\in M_{e\times d}(\C)$ and  $\Psi\in M_g(\C)^g$. 
 If\looseness=-1
\[
  \FG_\ell C \FG_j =\sum_{s=1}^g (\Psi_j)_{\ell,s} \FG_s,
\]
 then the tuple $\Psi$ is convexotonic.  Moreover, letting
 $T=CG\in M_e(\C)^g$,
\begin{equation}
\label{e:conmore}
 G_\ell T^\alpha = \sum_{s=1}^g (\Psi^\alpha)_{\ell,s} G_s.
\end{equation}

 In particular,
 if $A\in M_d(\C)^g$ is linearly independent and spans an algebra,
 then the tuple $\Psi$ uniquely determined by equation 
 \eqref{e:JgetsXi} is convexotonic.
\end{lemma}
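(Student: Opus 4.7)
The plan is to exploit associativity of matrix multiplication together with the linear independence of $\{G_1,\ldots,G_g\}$. First I would verify the "moreover" identity \eqref{e:conmore} by induction on the length of the word $\alpha$. The length-one case is literally the hypothesis, since $T_j=CG_j$ makes $G_\ell T_j = G_\ell C G_j$. For the inductive step with $\alpha=\beta j$, I would use $G_\ell T^\alpha = (G_\ell T^\beta)T_j$, apply the inductive hypothesis to rewrite $G_\ell T^\beta$ as $\sum_s (\Psi^\beta)_{\ell,s}G_s$, and then apply the hypothesis once more to each $G_s T_j = G_s C G_j$. Collapsing the double sum produces $\sum_t(\Psi^\beta \Psi_j)_{\ell,t}G_t = \sum_t (\Psi^\alpha)_{\ell,t}G_t$.

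Next I would derive the convexotonic relation for $\Psi$ by computing the triple product $G_\ell C G_k C G_j$ in two different orders. Associating to the left, $(G_\ell CG_k)CG_j$, gives $\sum_t(\Psi_k\Psi_j)_{\ell,t}G_t$ after applying the hypothesis twice. Associating to the right, $G_\ell C(G_k CG_j)$, first collapses $G_k CG_j=\sum_s(\Psi_j)_{k,s}G_s$, and then applying the hypothesis to each $G_\ell C G_s$ yields $\sum_{s,t}(\Psi_j)_{k,s}(\Psi_s)_{\ell,t}G_t$. Equating the two expressions and invoking linear independence of the $G_t$ gives $(\Psi_k\Psi_j)_{\ell,t}=\sum_s(\Psi_j)_{k,s}(\Psi_s)_{\ell,t}$ for every $\ell,t$, i.e.\ $\Psi_k\Psi_j=\sum_s(\Psi_j)_{k,s}\Psi_s$, which is precisely the convexotonic identity.

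For the "in particular" clause, I would simply note that when $A\in M_d(\C)^g$ is linearly independent and spans an algebra, the products $A_\ell A_j$ lie in $\spann\{A_1,\ldots,A_g\}$, so by linear independence there are unique scalars $(\Psi_j)_{\ell,s}$ with $A_\ell A_j=\sum_s(\Psi_j)_{\ell,s}A_s$. This is exactly the hypothesis of the lemma in the square case $d=e$ with $C=I_d$ and $G=A$, and convexotonicity of $\Psi$ then follows from the first part of the proof.

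I do not foresee any genuine obstacle: the entire argument is symbolic manipulation, and the only substantive input is that two different parenthesizations of the same triple product must agree. The mildly delicate point is bookkeeping the indices so that the two expansions genuinely match the form of the convexotonic identity $\Psi_k\Psi_j=\sum_s(\Psi_j)_{k,s}\Psi_s$ rather than its transpose; getting this straight is simply a matter of being careful about which factor the hypothesis is applied to at each step.
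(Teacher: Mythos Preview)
Your proposal is correct and follows essentially the same approach as the paper: both arguments compare two associativity-based expansions of a triple product (the paper writes it as $G_\ell T_j T_k$ with $T=CG$, you write it as $G_\ell C G_k C G_j$) and then invoke linear independence of the $G_s$ to read off the convexotonic identity, with the ``moreover'' identity established by the same straightforward induction. The only cosmetic difference is that you prove \eqref{e:conmore} first and the convexotonic relation second, whereas the paper reverses that order.
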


Note that the hypothesis implies $T$ spans an algebra
 (but not that $T$ is linearly independent).

\begin{proof}
    Routine calculations give
\[
 (\FG_\ell T_j) T_k = \sum_{t=1}^g (\Psi_j)_{\ell,t} \FG_t\, T_k 
 = \sum_{s,t=1} (\Psi_j)_{\ell,t} (\Psi_k)_{t,s} \FG_s
  = \sum_s (\Psi_j\, \Psi_k)_{\ell,s} \FG_s.
\]
 On the other hand
\[
 \FG_\ell (T_j T_k) = \FG_\ell C (\FG_j T_k) =  \sum_t \FG_\ell (\Psi_k)_{j,t}T_t 
  = \sum_{s,t}  (\Psi_t)_{\ell,s} (\Psi_k)_{j,t} \FG_s.
\]
 By independence of $\FG$, 
\[
(\Psi_j \Psi_k)_{\ell,s} =  \sum_{t}  (\Psi_k)_{j,t}  (\Psi_t)_{\ell,s}
\]
 and therefore
\[
\Psi_j \Psi_k  = \sum_{t}  (\Psi_k)_{j,t}  \Psi_t.
\] 
Hence $\Psi$ is convexotonic. 

A straightforward induction argument establishes the identity \eqref{e:conmore}.
\end{proof}

\begin{prop}
\label{prop:Nstatz}
Suppose $A,B\in M_{\rt}(\C)^g$ are linearly independent,
$U\in M_{\rt}(\C)^g$ is unitary,  $B=UA$ and there exists
a convexotonic tuple $\Xi\in M_g(\C)^g$ such that
\[
 A_\ell (U-I)A_j  = \sum_{s=1}^g (\Xi_j)_{\ell,s} A_s.
\]
Letting $p$ denote the associated convexotonic map, 
$R$ the tuple $(U-I)A=B-A$  and
\[
  Q(x) = I-\Lambda_R(x),
\]
\begin{enumerate}[\rm (a)]
\item \label{i:Nstatz1} we have
\begin{equation*}
\big(I+\Lambda_B(p(x))\big)Q(x)=I+\Lambda_A(x); 
\end{equation*}
\item \label{i:Nstatz3} if $Z\in \dom(p)$, then 
\begin{equation}
\label{e:Nstatzpre}
\big(I+\Lambda_B(p(Z)) \big)Q(Z) = I+\Lambda_A(Z),
\end{equation}
 and
\begin{equation}
 \label{e:Nstatz}
     Q(Z)^* \LRE_B(p(Z)) Q(Z) = \LRE_A(Z);
\end{equation}
\item \label{i:Nstatz4}   if $Z\in M(\C)^g$ and  $Q(Z)$ is invertible, then $Z\in \dom(p)$
 and equation \eqref{e:Nstatz} holds. 
\end{enumerate}
\end{prop}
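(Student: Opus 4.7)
My plan is to prove (a) as an identity of free rational functions, deduce (b) by evaluating (a) and performing an adjoint manipulation powered by the unitarity hypothesis $B=UA$, and handle (c) by using linear independence of $B$ to recast $p$ in a form that is manifestly regular wherever $Q$ is invertible.

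For (a), multiplying the convexotonic hypothesis $A_\ell(U-I)A_j=\sum_s(\Xi_j)_{\ell,s}A_s$ on the left by $U$ produces the corresponding identity $B_\ell R_j=\sum_s(\Xi_j)_{\ell,s}B_s$. Re-indexing then gives
\[
\Lambda_B(x)\Lambda_R(x)=\sum_{\ell,j}B_\ell R_j\, x_\ell x_j=\Lambda_B\bigl(x\Lambda_\Xi(x)\bigr),
\]
and an identical calculation establishes by induction that $\Lambda_B(x)\Lambda_R(x)^n=\Lambda_B(x\Lambda_\Xi(x)^n)$ for every $n\ge 0$. Summing the formal Neumann series in the ring of free rational functions regular at $0$ (cf.~\cite{KVV09,Vol17}) yields $\Lambda_B(x)Q(x)^{-1}=\Lambda_B(p(x))$. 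Rearranging via $\Lambda_A=\Lambda_B-\Lambda_R=\Lambda_B+Q-I$ then produces $(I+\Lambda_B(p(x)))Q(x)=I+\Lambda_A(x)$.

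For (b), evaluating (a) at $Z\in\dom(p)$ immediately gives \eqref{e:Nstatzpre}, since $Q$ is polynomial. Setting $P=\Lambda_B(p(Z))$, the identity and its adjoint deliver $PQ(Z)=\Lambda_B(Z)$ and $Q(Z)^*P^*=\Lambda_B(Z)^*$. Substituting these into
\[
Q(Z)^*\LRE_B(p(Z))Q(Z)=Q(Z)^*Q(Z)+Q(Z)^*PQ(Z)+Q(Z)^*P^*Q(Z)
\]
and simplifying reduces \eqref{e:Nstatz} to the identity
\[
\Lambda_R(Z)^*\Lambda_A(Z)+\Lambda_A(Z)^*\Lambda_R(Z)+\Lambda_R(Z)^*\Lambda_R(Z)=0.
\]
This follows from $B_j^*B_k=A_j^*A_k$ (a consequence of $B=UA$ with $U$ unitary), which gives $\Lambda_B(x)^*\Lambda_B(x)=\Lambda_A(x)^*\Lambda_A(x)$, expanded using $\Lambda_B=\Lambda_A+\Lambda_R$.

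For (c), fix $Z$ with $Q(Z)$ invertible and solve the identity of (a) for $\Lambda_B(p(x))=(I+\Lambda_A(x))Q(x)^{-1}-I$ as a free rational identity. Since $B$ is linearly independent, there are linear functionals $\lambda_1,\dots,\lambda_g$ on $M_r(\C)$ with $\lambda_j(B_i)=\delta_{ij}$, and applying $\lambda_j$ gives the free rational identity $p^j(x)=\lambda_j\bigl((I+\Lambda_A(x))Q(x)^{-1}-I\bigr)$. The right-hand side is regular whenever $Q$ is invertible; by the domain theory of \cite{Vol17} (exactly as in the proof of Proposition \ref{prop:properobvious}) this forces $\sI_R\subseteq\dom(p)$, so $Z\in\dom(p)$ and \eqref{e:Nstatz} follows from (b). The main technical obstacle will be justifying the Neumann-series step in (a), which is standard in the realization calculus of \cite{KVV09,Vol17}.
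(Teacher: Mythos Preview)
Your proposal is correct and follows essentially the same route as the paper. The paper proves (a) by first invoking Lemma~\ref{l:secret} to get $B_jR^\alpha=\sum_s(\Xi^\alpha)_{j,s}B_s$ wordwise and then summing to obtain $\Lambda_B(p(x))=\Lambda_B(x)(I-\Lambda_R(x))^{-1}$, which is exactly what your induction on $\Lambda_B(x)\Lambda_R(x)^n=\Lambda_B(x\Lambda_\Xi(x)^n)$ encodes; the derivations of (b) and (c) (via $\Lambda_B^*\Lambda_B=\Lambda_A^*\Lambda_A$ from unitarity, and via linear functionals dual to $B$ to exhibit $\sI_R\subset\dom(p)$) are likewise the same as the paper's, only with slightly different bookkeeping.
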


\begin{proof}
Item \eqref{i:Nstatz1} is straightforward, so we merely outline a proof. From Lemma \ref{l:secret},
 for words $\alpha$ and $1\le j\le g$,
\[
 A_j R^\alpha = \sum_{s=1}^g (\Xi^\alpha)_{j,s} A_s.
\]
Hence 
\[
 B_j R^\alpha =  \sum_{s=1}^g (\Xi^\alpha)_{j,s} B_s,
\]
from which it follows that, 
letting $\{e_1,\dots, e_g\}$ denote the standard basis for $\C^g$,
\[
\begin{split}
 \Lambda_B(p(x)) & = \sum_s B_s p^s(x) 
 = \sum_{s=1}^g \sum_{j=1}^g x_j \, [e_j^* (I-\Lambda_\Xi(x))^{-1} e_s] \\
& = \sum_{n=0}^\infty \, \sum_{j,s=1}^g  x_j \, [e_j^* \Lambda_\Xi(x)^n e_s]
   = \sum_{n=0}^\infty  \, \sum_{|\alpha|=n}  [\sum_{j,s=1}^g (\Xi^\alpha)_{j,s} B_s ]  x_j \alpha
  = \sum_{n=0}^\infty \sum_{j=1}^g B_j x_j \sum_{|\alpha|=n}  R^{\alpha}  \alpha \\
& = \sum_{j=1}^g B_j x_j \sum_{n=0}^\infty \Lambda_R(x)^n 
  = \Lambda_B(x) (I-\Lambda_R(x))^{-1}.
\end{split}
\]
In particular,
\[
\begin{split}
 \big(I+\Lambda_B(p(x))\big)Q(x) &=
 \big(I+\Lambda_B(p(x))\big)(I-\Lambda_R(x)) \\
 &  = I -\Lambda_R(x) +\Lambda_B(x)  = I+\Lambda_A(x),
\end{split}
\]
since $R=B-A$. This computation also shows if both $\|\Lambda_\Xi(Z)\|<1$
and $\|\Lambda_R(Z)\|<1$,  then equation \eqref{e:Nstatzpre} holds.
Since both sides of equation \eqref{e:Nstatzpre} are rational functions, equation
\eqref{e:Nstatzpre} holds whenever $Z\in \dom(p)$.  Finally, using
 $\Lambda_B(p(x))Q(x)=\Lambda_B(x)$ as well as $R=B-A$ and $B=UA,$
\[
\begin{split}
  Q(Z)^* \LRE_B(p(Z)) Q(Z) &=  Q^*(Z)Q(Z)+ Q(Z)^*\Lambda_B(Z) +\Lambda_B(X)^*Q(Z) \\
  & = I+\Lambda_A(Z) +\Lambda_A(Z) +\Lambda_B(Z)^*\Lambda_B(Z)-\Lambda_A(Z)^*\Lambda_A(Z) = \LRE_A(Z).
\end{split}
\]

 a routine calculation
shows that equation \eqref{e:Nstatzpre} implies equation \eqref{e:Nstatz}.

Since $B\in M_{\rt}(\C)^g$ is linearly independent, for each $1\le k\le g$
 there exists a linear functional $\lambda_k:M_{\rt}(\C)\to \C$ such that
 $\lambda_k(B_k)=1$ and $\lambda_k(B_j)=0$ if $j\ne k$. 
For each $k$, there is a matrix $\Psi_k\in M_{\rt}(\C)$ such that $\lambda_k(T)=\trace(T\Psi_k).$
Writing $\Psi_k = \sum_s  v_{k,s} u_{k,s}^*$ for vectors $u_{k,s},v_{k,s}\in \C^t$, 
\[
\lambda_k(T) = \sum_s u_{k,s}^* Tv_{k,s}.
\]
Let 
\[
 r^k(x)=\sum_{\ell,s} (u_{k,s}^*+ u_{k,s}^*A_\ell x_\ell) (I-\Lambda_R(x))^{-1} v_{k,s} -\lambda_k(I).
\]
Hence, for $X\in M_n(\C)^g$ sufficiently close to $0$, and with $W=Q^{-1}$
 and $\Phi_k=\lambda_k\otimes I_n,$ 
\[
\begin{split}
	p^k(X)& =  \Phi_k \left (\Lambda_B(p(X)) )
     = \Phi_k\left([I_{\rt}\otimes I_n +\Lambda_A(X)\right ]W(X)-I_{\rt}\otimes I_n \right) \\
	& = \sum_{\ell,s} [ u_{k,s}^*\otimes I + (u_{k,s}^*A_j \otimes I_n)(I_{\rt}\otimes X_j)] 
          (I_{\rt}\otimes I_n-\Lambda_R(X))^{-1}[v_{k,s}\otimes I_n] - \lambda_k(I)\otimes I_n\\
  & = r^k(X).
\end{split}
\]
Thus, in the notation  of equation \eqref{e:msI}, $\sI_R\subset \dom(p)$;
 that is, if $Q(Z)=I-\Lambda_R(Z)$ is invertible, 
    then $Z\in \dom(p),$ proving
 item \eqref{i:Nstatz4}.
\end{proof}

\begin{proof}[Proof of Theorem \ref{t:1point1}]
That $\Xi$ is convexotonic follows from Lemma \ref{l:secret}.
Let $p$ denote the resulting convexotonic map. 
 Let $R=\Balt-\Aalt=(U-I)\Aalt$ and $Q(x)=I-\Lambda_R(x).$ 
From Proposition \ref{prop:Nstatz},
\begin{equation}
\label{e:QLBQ}
 Q(X)^* \LRE_\Balt(p(X)) Q(X) = \LRE_\Aalt(X),
\end{equation}
holds whenever $Q(X)$ is invertible.

 Let $X\in \interior(\cD_\Aalt(n))$ be given.  
 The function 
 $F_X(z) = \Lambda_\Balt(p((1-z)X))$  
 is a $M_d(\C)\otimes M_n(\C)$-valued rational function (of the single complex variable $z$
 that is regular at $z=1$).  Suppose $\lim_{z\to 0} F_X(z)$ exists and let $T$ denote
 the limit. In that case,
\[
\begin{split}
 Q(X)^* (I+T+T^*) Q(X) & = \lim_{z\to 0} Q((1-z)X)^* (I+F_X(z)+F_X(z)^*) Q((1-z)X) \\
  & = \LRE_\Aalt(X)\succ 0
  \end{split}
\]
and therefore $Q(X)$ is invertible (and $I+T+T^*\succ 0$). 
 Hence, if $\lim_{z\to 0} F_X(z)$ exists, then $Q(X)$ is invertible.

We now show the limit  $\lim_{z\to 0} F_X(z)$ must exist, arguing by contradiction. Accordingly,
suppose this limit fails to exist. Equivalently, $F_X(z)$ has a pole at $0$.
In this case there exists a $M_d(\C)\otimes M_n(\C)$ matrix-valued function $\Psi(z)$ 
analytic and never $0$ in a neighborhood of $0$ and a positive 
integer $m$ such that $F_X(z) = z^{-m} \Psi(z)$.   Since $\Psi(0)\ne 0$,
there is a vector $\gamma$ such that $\langle \Psi(0)\gamma,\gamma\rangle \ne 0$
(since the scalar field is $\C$).  Choose a real number $\theta$
such that $\kappa:=e^{-im \theta} \langle \Psi(0)\gamma,\gamma\rangle <0$. Hence,
 for $t$ real and positive,
\[
\begin{split}
 \langle &(F_X(te^{i \theta}) +  F_X(te^{i\theta})^*)\gamma,\gamma \rangle \\
   & = t^{-m} \langle [e^{-im \theta} \Psi(te^{i\theta})
     +e^{im\theta}\Psi(te^{i\theta})^*]\gamma,\gamma\rangle
\\
& = t^{-m} \left [ 2\langle e^{-im\theta} \Psi(0)\gamma,\gamma\rangle + 
 \langle [ e^{-im\theta} [\Psi(te^{-i\theta})-\Psi(0)]\gamma,\gamma\rangle
    + e^{im\theta} \langle [\Psi(te^{-i\theta})^* -\Psi(0)^*]\gamma,\gamma\rangle  \right ]\\
& \le 2t^{-m} [\kappa + \delta_t],
\end{split}
\]
where $\delta_t$ tends to $0$ as $t$ tends to $0$. Hence, for $0<t$ sufficiently small,
\[
 \langle \LRE_\Balt(p((1-te^{-im\theta})X))\gamma,\gamma\rangle
 =  \langle (I+F_X(te^{i\theta}) + F_X(te^{i\theta})^*)\gamma,\gamma \rangle <0,
\]
contradicting the fact that 
 $(1-t e^{-im\theta}) X\in \interior(\cD_\Aalt)\cap \dom(p)$
for all $0<t$ sufficiently small.  At this point
we have shown if $X\in \interior(\cD_\Aalt)$, then $Q(X)$ is invertible
 and therefore, by Proposition~\ref{prop:Nstatz}, $X\in \dom(p).$ Further, 
 if $X\in \interior(\cD_\Aalt)$, then, by equation \eqref{e:QLBQ},
\[
 Q(X)^* \LRE_\Balt (p(X)) Q(X) = \LRE_\Aalt(X) \succ 0
\]
and thus $\LRE_\Balt(p(X))\succ 0$; that is $p(X)\in \interior(\cD_\Balt)$, 
 By symmetry,  the same is true for $q.$ Consequently, $p:\interior(\cD_\Aalt)\to\interior(\cD_\Balt)$
 is bianalytic with inverse $q:\interior(\cD_{\Balt})\to\interior(\cD_{\Aalt}),$
 proving item \eqref{i:11a}.

 If $X\in \exterior(\cD_{\Aalt})\cap \dom(p),$ then $\LRE_\Balt(p(X)) \not\succeq 0$
 by Proposition~\ref{prop:Nstatz}\eqref{i:Nstatz3} and equation \eqref{e:Nstatz},
 proving item \eqref{i:11d}.

Now suppose $\cD_\Balt(1)$ is  bounded and $Z\in \partial \cD_\Aalt(n)$.
By \cite[Proposition 2.4]{HKM}, $\cD_\Balt(n)$ is also bounded. 
For $0<t<1$, we have  $tZ\in\dom(p)$ (by item \eqref{i:11a})
 and hence $\varphi$, defined on $(0,1)$ by $\varphi_Z(t):=p(tZ)$,
 maps into $\interior(\cD_\Balt(n))$ and is thus bounded. It follows
 that $G_Z(t) = \Lambda_\Balt(\varphi_Z(t))$ is also a bounded function
 on $(0,1)$.  Arguing by contradiction, suppose $Q(Z)=I-\Lambda_R(Z)$ is not invertible.
Thus there is a unit vector $\gamma$ 
  such that $Q(zZ)\gamma  = (1-z)\gamma.$
 For $0<t<1$, equation \eqref{e:QLBQ} gives,
\[
 (1-t)^2 \langle  \LRE_\Balt(\varphi_Z(t))\gamma,\gamma\rangle
  = 1 -  t [- \langle [\Lambda_\Aalt(Z)+\Lambda_\Aalt(Z)^*]\gamma,\gamma\rangle].
\]
 Since the left hand side converges to $0$ as $t$ approaches $1$ from below, 
 the right hand equals $1-t$. Hence 
\[
(1-t) \langle  \LRE_\Balt(\varphi_Z(t))\gamma,\gamma\rangle = 1,
\]
 and we have arrived at a contradiction, as the left hand side converges to $0$ 
 as $t$ tends to $1$ from below.  Hence $Q(Z)$ is invertible.
 By Proposition \ref{prop:Nstatz}\eqref{i:Nstatz4}, if $\cD_{\Balt}$ is bounded,
then $\cD_{\Aalt}\subset \dom(p)$, proving item \eqref{i:11b}.

Suppose  $X\in \dom(p)\cap \partial \cD_\Aalt.$ Since $\dom(p)$ is open,
 $tX\in \dom(p)$
 for $t\in\mathbb R$ sufficiently close to $1.$ Further   $p(tX)\in \interior(\cD_A)$
for $t<1$ and $p(tX)\in \exterior(\cD_{\Balt})$ for $t>1.$
By continuity,   $p(X)\in \partial \cD_{\Balt},$  proving item \eqref{i:11e}.
\end{proof}

\section{Minimality and \irrLy}\label{sec:prelim}
A  \mop $L_A=L_A(x,y)$ of size $e$ is \df{\irrL} 
if its coefficients $\{A_1,\dots,A_g,A_1^*,\dots,A_g^*\}$ generate
$\mat{e}$ as a $\C$-algebra.\footnote{Previously, in \cite{KV}
such pencils were called irreducible.}  
A collection of   sets $\{S_1,\dots,S_k\}$ is \df{irredundant} if
$\bigcap_{j\ne \ell} S_j \not\subseteq S_\ell$  for all $\ell$.
A collection $\{L_{A^1},\dots,L_{A^k}\}$ of \mops
is \df{irredundant} if $\{\cD_{A^j}:1\le j\le k\}$ is irredundant.

\begin{lemma}
\label{l:Z+}
Given $B\in M_r(\C)^g$, there exists a reducing subspace $\sM$ for 
$\{B_1,\dots,B_g\}$ 
such that, with $A=B|_{\sM}$,
the \mop $L_A$ is minimal for $\cD_B=\cD_A.$

If $L_A$ and $L_B$ are both minimal and $\cD_A=\cD_B$, then
$A$ and $B$ are unitarily equivalent. In particular $A$ and $B$
have the same size.

Given a \mop $L_A(x,y)= I+\sum A_j x_j  +\sum A_j^* y$, 
there is a $k$ and  \irrL  \mops $L_{A^j}$ such that 
\[
 L_A=\bigoplus_{j=1}^k L_{A^j} = L_{\bigoplus_{j=1}^k A^j},
\]
where the direct sum is in the sense of an orthogonal direct
sum decomposition of the space that $A$ acts upon.
Moreover, $L_A$ is minimal if and only if $\{L_{A^j}:1\le j\le \ell\}$ is irredundant.
\end{lemma}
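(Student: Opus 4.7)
The plan is to establish (3) first, then derive (1), and finally use the refined form of (3) to prove (2). The main obstacle throughout is the ``irredundant implies minimal'' direction of (3), which also underlies the uniqueness claim in (2); I address my approach to it at the end.

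For (3), I would apply Artin--Wedderburn to the unital $*$-algebra $\cA\subseteq M_e(\C)$ generated by $\{A_1,\dots,A_g,A_1^*,\dots,A_g^*\}$. Since $\cA$ is a finite-dimensional $*$-subalgebra of $M_e(\C)$, there exist a unitary $U\in M_e(\C)$ and positive integers $e_1,\dots,e_\ell,m_1,\dots,m_\ell$ with $\sum_j e_j m_j = e$ such that
\[
U^*\cA U \;=\; \bigoplus_{j=1}^{\ell} \bigl(M_{e_j}(\C)\otimes I_{m_j}\bigr).
\]
Each coefficient then takes the form $U^* A_i U = \bigoplus_j A_i^{(j)}\otimes I_{m_j}$ with $A_i^{(j)}\in M_{e_j}(\C)$, and by construction the coefficients of the tuple $A^{(j)}$ together with their adjoints generate all of $M_{e_j}(\C)$, so $L_{A^{(j)}}$ is irreducible. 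Unfolding the multiplicities yields the desired decomposition $L_A = \bigoplus_{j=1}^k L_{A^j}$ with each $L_{A^j}$ irreducible and $k=\sum_j m_j$; the underlying orthogonal splitting of $\C^e$ is the one dictated by the Wedderburn blocks.

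Part (1) then follows by iterative pruning. Apply (3) to $L_B$ to write $L_B=\bigoplus_j L_{B^j}$ with each $L_{B^j}$ irreducible. Whenever the family $\{\cD_{B^j}\}$ is redundant, some index $\ell$ satisfies $\bigcap_{j\ne\ell}\cD_{B^j}\subseteq \cD_{B^\ell}$, so removing $L_{B^\ell}$ leaves $\cD_B=\bigcap_j\cD_{B^j}$ unchanged. Discarding redundant summands one at a time produces an irredundant irreducible decomposition $L_A=\bigoplus_j L_{B^j}$ over the surviving indices with $\cD_A=\cD_B$; the surviving blocks span a reducing subspace $\sM\subseteq \C^r$ for $\{B_1,\dots,B_g\}$ with $A=B|_\sM$. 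The ``minimal implies irredundant'' direction of the iff in (3) is immediate, since a redundant summand can always be dropped to produce a strictly smaller pencil with the same spectrahedron. Thus minimality of $L_A$ reduces to the reverse implication.

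That reverse implication, together with (2), is the main hurdle. My approach combines the Effros--Winkler matricial Hahn--Banach separation theorem with an Arveson-style dilation argument. Irredundancy of $\{L_{A^j}\}$ provides, for each $j$, a tuple $X^j\in \cD_A$ and a nonzero vector $v^j\in \ker \LRE_{A^j}(X^j)$ witnessing that $\cD_{A^j}$ is not absorbed by the other constraints; irreducibility of $L_{A^j}$ then ensures that the cyclic subspace generated by $v^j$ under the coefficients of $A^j$ exhausts the full representation space. A Stinespring-type compression argument then forces any pencil $L_C$ with $\cD_C=\cD_A$ to contain a direct summand unitarily equivalent to $L_{A^j}$. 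Summing over $j$ yields $\mathrm{size}(C)\ge \mathrm{size}(A)$, proving minimality in (3) and simultaneously identifying the multiset of irreducible summands as an intrinsic invariant of $\cD_A$. Part (2) is then immediate: two minimal pencils $L_A$ and $L_B$ with $\cD_A=\cD_B$ have irredundant irreducible decompositions with the same multiset of blocks, and matching blocks produces a unitary $U$ with $U^* A_i U = B_i$ for all $i$; in particular $A$ and $B$ have the same size.
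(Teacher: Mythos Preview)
The paper does not actually prove this lemma; its entire proof is a citation to \cite{Zal17} (with pointers to \cite{HKM}, \cite{KV}, \cite{HKV}), noting that the real-coefficient arguments there go through over $\C$. Your plan---Artin--Wedderburn for the decomposition into indecomposable summands, iterated pruning for existence of a minimal reducing subspace, and an Effros--Winkler/Stinespring (completely positive map) argument to show irredundant $\Rightarrow$ minimal and to extract uniqueness---is precisely the route taken in those references, especially \cite{HKM} and \cite{Zal17}, so your proposal is correct and in line with what the paper invokes. One small sharpening: in your sketch of the hard direction you mix the boundary-witness/cyclic-vector picture with the CP-map/Arveson picture; either suffices on its own, and if you commit to the latter the key step is that $\cD_C=\cD_A$ gives UCP maps both ways between the operator systems, and irreducibility of each $A^{j}$ (its coefficients generate a full matrix algebra) forces the induced map onto that block to be a $*$-homomorphism, hence a restriction to a reducing subspace of $C$, after which pairwise inequivalence of the blocks (forced by irredundancy) makes the dimension count and uniqueness immediate.
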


\begin{proof}
Zalar \cite{Zal17} (see also \cite{HKM}) establishes this result
over the reals, but the proofs work (and are easier) over $\C$;
it can also be deduced from the results in \cite{KV} 
and \cite{HKV}.
\end{proof}

Note if $E$ is \bmin then $\ker(E)=\{0\}$ and $\ker(E^*)=\{0\},$  an observation that will be used 
repeatedly in the sequel.

\begin{lemma}\label{l:trans}
Let $E$ be a $g$-tuple of $d\times e$ matrices and assume $\ker(E^*)=\{0\}$ and $\ker(E)=\{0\}.$
\begin{enumerate}[{\rm (1)}]
	\item \label{it:lt1} We have
	\begin{equation}\label{e:sa}
	\begin{pmatrix}I & 0 \\ \Lambda_{E^*} & I\end{pmatrix}
	\begin{pmatrix}I & 0 \\ 0 & Q_E\end{pmatrix}
	\begin{pmatrix}I & \Lambda_E \\ 0 & I\end{pmatrix}
	=\M_E. 
	\end{equation}
   \item \label{it:lt2} The \mop $\M_{E}$ is \irrL if 
    and only if $Q_E$ is an atom. %
	\item \label{it:lt3}  $E$ is \bmin
     if and only if $\Mre_{E}$ is minimal.
\item \label{it:lt3+} If $A\in M_N(\C)^g$ and $A_mA_j=0$ for all $1\le j,m\le g$
  then,  $\dim\rg A + \dim\rg A^* \le N$ and for any $s\ge \dim\rg A$
and $t\ge \dim \rg A^*$ with $s+t= N$, 
 there exists a tuple  $F \in M_{s\times t}(\C)^g$ such that $A$ is unitarily equivalent to
\[
  \begin{pmatrix} 0 & F \\ 0 & 0 \end{pmatrix}.
\]
 	\item \label{it:lt4} If $L_A$ is minimal and $\cD_A$ is a spectraball,
  then there exist \bmin tuples $F^1,\dots,F^k$ such that each $\M_{F^j}$ is an \irrL \mop,
  $\{\cB_{F^1},\dots,\cB_{F^k}\}$ is irredundant  and 
  $L_A$ is unitarily equivalent to $\M_{F^1}\oplus\cdots\oplus \M_{F^k}.$
\item \label{it:lt4-} If $A$ is \bmin, then $L_A$ is minimal. 
  \item  \label{it:lt8}
   If $E$ is \bmin, then, up to unitary equivalence,
   $Q_E=Q_{E^1}\oplus\cdots\oplus Q_{E^k}$, where the  $Q_{E^j}\in \pxy^{e_j\times e_j}$ 
are atoms, $\ker (E^j)=\{0\}$ for all $j$, and the spectraballs $\cB_{E^j}$ are 
irredundant.
 \item \label{it:lt5} If $Q_E$ is an atom, %
           then $E$ is \bmin.
 \item \label{it:lt7} 
 If  $E$ \bmin, $F\in M_{k\times \ell}(\C)^g$ 
and $\cB_E=\cB_F$, then there
 is a tuple $R\in M_{(k-d)\times (\ell-e)}(\C)^g$
   and unitaries $U,V$  of sizes $k\times k$ and $\ell\times \ell$
 respectively such that
 $\cB_E\subseteq \cB_R$ and 
\begin{equation}
\label{e:FUERV}
 F = U\begin{pmatrix} E & 0 \\ 0 & R \end{pmatrix} V.
\end{equation}
In particular,
 \begin{enumerate}[\rm (a)]
  \item $d\le k$ and $e\le \ell$;
 \item  if $F\in M_{d\times e}(\C)^g$ is \bmin too, then $E$ and $F$ are ball-equivalent.
  \end{enumerate}
\end{enumerate}
\end{lemma}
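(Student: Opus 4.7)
The plan is to prove the items in an order that reflects their dependencies: (1), (3+), (2), (3), then (5), (4-), (4), (8), and finally (7). Item (1) is a direct block multiplication: $Q_E$ is the Schur complement of the $(1,1)$ block of $\M_E$, and the outer factors are the standard LDU factors. For (3+), the hypothesis $A_m A_j = 0$ gives $\rg A \subseteq \bigcap_m \ker A_m = \ker A$, hence $\rg A \perp \rg A^*$ and $\dim \rg A + \dim \rg A^* \le N$. Given $s,t$ satisfying the stated bounds, I would pick an $s$-dimensional subspace $W_1$ with $\rg A \subseteq W_1 \subseteq \ker A$ (possible because $\dim \ker A = N - \dim \rg A^* \ge s$); in an orthonormal basis adapted to $\C^N = W_1 \oplus W_1^\perp$, each $A_j$ kills $W_1$ and maps $W_1^\perp$ into $\rg A \subseteq W_1$, producing the desired block form.

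Item (2) exploits (1): the outer factors are polynomial matrices with polynomial inverses, so a non-trivial factorization of $\M_E$ (into non-invertible polynomial matrices) descends to one of $Q_E$, and vice versa, giving equivalence of irreducibility of $\M_E$ and atomness of $Q_E$. For (3), combine (3+), (1), and Lemma~\ref{l:Z+}. Writing $F = \left(\begin{smallmatrix}0&E\\0&0\end{smallmatrix}\right)$, both minimality of $\Mre_E$ and ball-minimality of $E$ translate to $d+e$ being the size of the minimal monic pencil for $\cB_E$; a smaller minimal pencil $L_B$ for $\cB_E$ would appear, by Lemma~\ref{l:Z+}, as the restriction of $L_F$ to a reducing subspace, so $B$ inherits the square-zero property, and (3+) produces a strictly smaller $E'$ with $\cB_{E'} = \cB_E$, contradicting ball-minimality.

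Items (5), (4-), (4), and (8) then chain together. For (5), atomness of $Q_E$ yields irreducibility of $\M_E$ via (2), hence minimality by Lemma~\ref{l:Z+}, and finally ball-minimality by (3); item (4-) is essentially the same statement. For (4), Lemma~\ref{l:Z+} decomposes the minimal $L_A$ into irreducible pieces $L_{A^j}$; since $\cD_A$ is a spectraball, uniqueness of minimal monic pencils identifies $L_A$ with some $\Mre_E$, and (3+) applied to each irreducible component forces $A^j \cong \left(\begin{smallmatrix}0&F^j\\0&0\end{smallmatrix}\right) = \M_{F^j}$ with $F^j$ ball-minimal and $Q_{F^j}$ atomic by (2); irredundance follows from Lemma~\ref{l:Z+}. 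Item (8) then specializes (4) to the case $L_A = \Mre_E$.

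Finally, (7) is the most delicate item and where I expect the main bookkeeping obstacle to lie. Given any $F \in M_{k\times\ell}(\C)^g$ with $\cB_F = \cB_E$, form $\M_F$ and apply Lemma~\ref{l:Z+}: the minimal pencil for $\cB_E$ (which by (3) is unitarily equivalent to $\Mre_E$) appears as the restriction of $\M_F$ to a reducing subspace. Because all $F_j$ and $F_j^*$ preserve this subspace and their off-diagonal square-zero structure is rigid, one can construct unitaries $U$ of size $k$ and $V$ of size $\ell$ realizing $F = U \left(\begin{smallmatrix}E&0\\0&R\end{smallmatrix}\right) V$; then $\cB_F = \cB_E \cap \cB_R$ gives $\cB_E \subseteq \cB_R$, and if $F$ is itself ball-minimal, size-counting forces the absence of $R$ and yields the claimed ball-equivalence with $E$.
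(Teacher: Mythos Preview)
Your overall structure matches the paper's proof closely, and your alternative route for (3)---using Lemma~\ref{l:Z+} together with (3+) to avoid the external citation to \cite{EHKM17}---is in fact cleaner than what the paper does. There are, however, two genuine gaps.

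First, in (2) you conflate two distinct properties of $\M_E$: being \emph{indecomposable} (its coefficients $\{\left(\begin{smallmatrix}0&E_j\\0&0\end{smallmatrix}\right),\left(\begin{smallmatrix}0&0\\E_j^*&0\end{smallmatrix}\right)\}$ generate the full matrix algebra $M_{d+e}(\C)$) and being an \emph{atom} (admitting no nontrivial factorization in $\pxy^{(d+e)\times(d+e)}$). Your stable-association argument correctly shows that $\M_E$ is an atom iff $Q_E$ is an atom, but the equivalence of indecomposability and atomicity for monic linear pencils is a separate nontrivial fact (\cite[Section~2.1 and Theorem~3.4]{HKV}, using the standing hypothesis $\ker E=\ker E^*=\{0\}$), which the paper invokes explicitly and your plan omits.

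Second, your claim that (4-) is ``essentially the same statement'' as (5) is incorrect. Item (5) deduces ball-minimality of $E$ from atomicity of $Q_E$; item (4-) asks for minimality of $L_A$---a size-$d$ pencil---from ball-minimality of a \emph{square} tuple $A$, which is a different implication in a different setting. Item (3) would only give minimality of $\Mre_A$, a pencil of size $2d$, not of $L_A$. The paper's argument hinges on the off-diagonal trick: $X\in\cB_A$ iff $S\otimes X\in\cD_A$, where $S=\left(\begin{smallmatrix}0&1\\0&0\end{smallmatrix}\right)$. This forces $\cD_A=\cD_B\Rightarrow\cB_A=\cB_B$, and then ball-minimality of $A$ (as a $d\times d$ tuple) bounds the size of $B$ from below by $d$. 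Your plan contains no substitute for this step.
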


Item \eqref{it:lt7} can be interpreted in terms of 
completely contractive maps and as special cases of the rectangular operator spaces of \cite{FHL18}.
Indeed, letting $\sE$ and $\sF$ denote the spans of $\{E_1,\dots,E_g\}$
and $\{F_1,\dots,F_g\}$ respectively, the inclusion $\cB_E \subset \cB_F$ is equivalent to the 
mapping $\Phi:\sE\to\sF$ defined by $\Phi(E_j)=F_j$ being completely contractive. Hence
 $\cB_E=\cB_F$ if and only if $\Phi$ is completely isometric.

\begin{proof}
\eqref{it:lt1} Straightforward. 

\eqref{it:lt2} By \eqref{e:sa}, $Q_E$ and $\M_{E}$ are stably associated, cf. 
\cite[Section 4]{HKV}. Hence $\M_{E}$ does not factor in $\pxy^{(d+e)\times (d+e)}$ 
if and only if $Q_E$ does not factor in $\pxy^{e\times e}$ by 
 \cite[Section 4]{HKV}.
 Next, $\M_{E}$  is \irrL 
if and only if it does not factor  and 
\[
 \ker(\begin{pmatrix} 0&E\\0&0 \end{pmatrix}) \cap \ker(\begin{pmatrix} 0& 0\\ E^* & 0 \end{pmatrix}) = \{0\}
\]
(\cite[Section 2.1 and Theorem 3.4]{HKV}).
 Thus $\M_{E}$ is \irrL if and only if $Q_E$ does not factor. 

\eqref{it:lt3} Let $L_B$ be minimal for $\cD_B=\cB_E$ and let $N$ denote the size of $B$. 
  By  \cite[Theorem 1.1(2)]{EHKM17} there exists positive integers $s,t$
  such that $s+t=N$ and a tuple $F\in M_{s\times t}(\C)^g$ such that 
\[
 B = \begin{pmatrix} 0&F\\0&0\end{pmatrix}.
\]
Thus $\cB_E=\cB_F$. On the other hand, with
\[
 A= \begin{pmatrix} 0&E\\0&0\end{pmatrix},
\]
$\cD_A=\cB_E$ too. By minimality of $B$, $s+t\le d+e$. 
If $E$ is \bmin, then, since  $\cB_E=\cB_F,$ we have
$s+t\ge d+e$ and hence $\Lre_A=\Mre_E$ is minimal. On the other hand,
if $\Mre_E$ is minimal, then $\Mre_E$ and $L_B$ have
the same size, $N=s+t = d+e$  and thus $E$ is \bmin.

\eqref{it:lt3+} Let $\mathscr{R} = \rg A$ and $\mathscr{R_*}=\rg A^*$.  
Since $A_mA_j=0$ it follows that $\mathscr{R}$ and $\mathscr{R_*}$ are orthogonal
and also that $A_m \mathscr{R}=0$ and $A_m^* \mathscr{R_*}=0$
for
$1\le m\le g$.  In particular, $\dim\mathscr{R}+\dim\mathscr{R_*} \le N.$
	Letting $V$ and $V_*$ denote
the inclusions of $\mathscr{R}$ and $\mathscr{R}_*$ into $\C^N$ respectively,
\begin{equation}\label{e:cor}
 A= \begin{pmatrix} 0 & 0 & V^* A V_*\\ 0 & 0 &0\\ 0&0&0\end{pmatrix},
\end{equation}
with respect to the decomposition 
$\C^N = (\mathscr{R}\oplus\mathscr{R_*})^\perp \oplus \mathscr{R}\oplus \mathscr{R_*}$. Now any choice of $s\ge \dim \mathscr{R}$ and $t\ge \dim \mathscr{R_*}$ with $s+t=N$ applied to \eqref{e:cor} gives the desired decomposition.

\eqref{it:lt4}
  Since  $L_A$ is minimal, 
by Lemma \ref{l:Z+},  $L_A$ is 
unitarily equivalent to $L_{A^1}\oplus\cdots\oplus L_{A^k}$ for some \irrL  
irredundant \mops
$L_{A^1},\dots,L_{A^k}.$  Let $N_j$ denote the size
of $A^j$.   Now suppose $\cD_A$ is a spectraball.
Thus, there exists $m,\ell$ and a \bmin tuple $G\in M_{m\times\ell}(\C)^g$ such that 
$\cD_A = \cB_G$. By item \eqref{it:lt3}  $\Mre_G$ is minimal for $\cD_A$.
Thus
\[
B:=\begin{pmatrix} 0 & G \\0 & 0 \end{pmatrix}\in M_{m+\ell}(\C)^g
\]
is unitarily equivalent to $A^1\oplus\cdots\oplus A^k$ by Lemma \ref{l:Z+}. 
Since $B_mB_j=0$ for $1\le j,m\le g$, it follows that 
$A^\ell_m A^\ell_j =0$ for all $j,m,\ell$. By item \eqref{it:lt3+},
there exists $s_j,t_j$ such that $s_j+t_j=N_j$ and tuples 
$F^j \in M_{s_j\times t_j}(\C)^g$ such that, up to unitary equivalence,
\[
 A^j = \begin{pmatrix} 0& F^j \\ 0 & 0 \end{pmatrix}  \in M_{N_j}(\C)^g.
\]
Moreover, since $L_A$ is minimal and $\cD_A=\cap_{j=1}^k \cB_{F^j}$,  each $F^j$ is \bmin.

\eqref{it:lt4-} Given a tuple $A\in M_d(\C)^g$, observe that $X\in \cB_A$ if and only if
 $S\otimes X\in \cD_A$, where 
\[
 S=\begin{pmatrix}0 &1\\0 & 0\end{pmatrix}.
\]
Thus, if $B\in M_r(\C)^d$ and $\cD_B=\cD_A$, then $\cB_B=\cB_A$ and by \bminy, 
$r\ge d$. Hence $L_A$ is minimal.

\eqref{it:lt8} Combine items \eqref{it:lt3}, \eqref{it:lt4} and \eqref{it:lt2}
in that order.

\eqref{it:lt5}
  By item \eqref{it:lt2},  $\M_E$ is \irrL. %
  For a pencil $L$, \irrLy of $L$ implies minimality of $\Lre$  by Lemma \ref{l:Z+}. 
   Thus $\Mre_E$ is minimal
   and hence $E$ is \bmin by item \eqref{it:lt3}.

\eqref{it:lt7}
 Let
\[
 A= \begin{pmatrix} 0&E\\0 &0\end{pmatrix} \in M_{d+e}(\C)^g.
\]
 By item \eqref{it:lt3}, $\Lre_A=\Mre_E$ is minimal.
 Since $\Mre_F$ defines 
 $\cB_E,$ there is a reducing subspace $\mathscr{M}$  for 
\[
B= \begin{pmatrix} 0 & F\\0& 0\end{pmatrix} \in M_{k+\ell}(\C)^g
\]
such that the restriction of $B$ to $\mathscr{M}$ is unitarily equivalent to  $A$
by Lemma \ref{l:Z+}. Thus, 
there is unitary $Z\in M_{k+\ell}(\C)$ and a tuple $C\in M_{(k+\ell)-(d+e)}(\C)^g$
such that,  with
respect to the decomposition $\mathscr{M}\oplus \mathscr{M}^\perp$,
\[
  B= Z^*\begin{pmatrix}  A & 0 \\ 0 & C \end{pmatrix} Z.
\]
Since $B_mB_j=0$ for all $j,m$, we have $C_mC_j=0$ too.  
Further, using \bminy of $E$,  $\ell\ge\rk F^*F =\rk E^*E +\rk C^*C = e+\rk C^*C$.
Thus $\dim \rg C \le \ell-e$. Likewise, $\dim\rg C^* \le k-d$. 
By item \eqref{it:lt3+}, there exists a tuple $R\in M_{(k-d)\times(\ell-e)}(\C)^g$ such that,
up to unitary equivalence, 
\[
 C= \begin{pmatrix} 0 & R  \\ 0 & 0 \end{pmatrix}.
\]
Thus, letting $G=(\begin{smallmatrix} E&0\\0&R\end{smallmatrix})\in M_{k\times\ell}(\C)^g$,
\[
\begin{pmatrix} 0 & F\\ 0 & 0  \end{pmatrix} X
  = X \begin{pmatrix} 0 & G \\ 0&0 \end{pmatrix}
\]
for some unitary matrix  $X$.  Writing  $X=(X_{j,k})_{j,k=1}^2$ with respect to the
decomposition $\C^k\oplus \C^\ell$, it follows that
\[
 X_{11} G = FX_{22}, \ \ X_{21}G=0, \ \ FX_{21}=0.
\]
Hence $F X_{22}X_{22}^* = F$ and $X_{11}^*X_{11}G= G$. Thus $X_{11}$ is isometric
on $\rg G$ and therefore $X_{11}$ extends to a  unitary mapping $U$ on all of $\C^k$
such that $UG=X_{11}G$. Similarly, $X_{22}^*$ is isometric on $\rg F^*$
and hence $X_{22}^*$ extends to a unitary $V$ on all of $\C^\ell$ such that
$V F^*= X_{21}^* F^*.$  Finally, $UG=X_{11}G=FX_{22}=FV^*$. Hence
equation \eqref{e:FUERV} holds, which implies $\cB_E=\cB_F= \cB_E\cap \cB_R.$
Thus $\cB_E\subset \cB_R$ and the remainder of item \eqref{it:lt7} follows.
\end{proof}

Minimality and \irrLy  of \mops 
are preserved under an affine linear change of variables.

\begin{prop}\label{prop:aff}
Consider a \hmm  pencil $\LRE_A$  and an affine linear change of variables
$\lambda:x\mapsto xM+b$ for some invertible $g\times g$ matrix $M$
and vector $b\in\C^g$. 
If $\LRE_A(b)\succ0,$  then $\lambda^{-1}(\cD_A)= \cD_{F}$, where
\beq\label{eq:aff}
F=M \cdotb (\sH A \sH) \quad\text{ and }\quad \sH=\LRE_A(b)^{-1/2}.
\eeq
Further,
\begin{enumerate}[\rm (1)]
\item
\label{it:affineirr}
$L_A$ is \irrL if and only if $L_F$ \irrL;
\item
\label{it:affinemin}
$L_A$ is minimal if and only if $L_F$ is minimal.
\end{enumerate}
\end{prop}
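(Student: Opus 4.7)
The plan is to first establish the set equality $\lambda^{-1}(\cD_A) = \cD_F$ via a direct pencil identity, and then derive (1) and (2) by combining this identity with a symmetry argument for the reverse change of variables.

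I would start by expanding $\LRE_A(\lambda(X))$ using $\lambda(X)_j = \sum_k M_{k,j} X_k + b_j I$ and commuting scalar factors past the coefficients $A_j$, yielding
\[
\LRE_A(\lambda(X)) = \LRE_A(b) + \sum_k (M\cdotb A)_k X_k + \sum_k (M\cdotb A)_k^* X_k^*.
\]
Conjugating both sides by $\sH = \LRE_A(b)^{-1/2}$ (which commutes through $X_k, X_k^*$ since they live on a distinct tensor factor) gives
\[
\sH \LRE_A(\lambda(X)) \sH = I + \sum_k F_k X_k + \sum_k F_k^* X_k^* = \LRE_F(X),
\]
where $F_k = \sH(M\cdotb A)_k \sH = (M\cdotb (\sH A \sH))_k$. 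Since $\sH$ is positive definite, positivity of $\LRE_A(\lambda(X))$ is equivalent to positivity of $\LRE_F(X)$, which yields $\lambda^{-1}(\cD_A) = \cD_F$ and simultaneously shows $\LRE_F(0)=I$, so $L_F$ is a \mop.

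For (1), let $\mathcal{A}$ and $\mathcal{F}$ denote the unital $*$-subalgebras of $\mat{e}$ generated by $\{A_j\}$ and $\{F_j\}$ respectively; indecomposability of $L_A$ (resp.\ $L_F$) is equivalent to $\mathcal{A} = \mat{e}$ (resp.\ $\mathcal{F} = \mat{e}$). The key observation is that $\sH$ lies in $\mathcal{A}$: $\LRE_A(b)\in\mathcal{A}$ is positive definite, so its inverse square root is a polynomial in it via functional calculus. Consequently, if $T \in \mathcal{A}'$, then $T$ commutes with $\sH$ and with each $(M\cdotb A)_k$, hence with $F_k = \sH(M\cdotb A)_k\sH$ and with $F_k^*$; so $\mathcal{A}' \subseteq \mathcal{F}'$. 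For the reverse inclusion I would use symmetry: direct computation gives $\LRE_F(-bM^{-1}) = \sH\LRE_A(0)\sH = \sH^2$, so the role of $\sH$ in the reverse change of variables $\mu = \lambda^{-1}$ is played by $\sH^{-1}\in\mathcal{F}$, and the same argument yields $\mathcal{F}' \subseteq \mathcal{A}'$. By the finite-dimensional double commutant theorem, $\mathcal{A} = \mathcal{F}$.

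For (2), I would argue by contradiction: suppose $L_A$ is minimal but $L_F$ is not, so there exists $G \in M_{e'}(\C)^g$ with $e' <$ size of $A$ and $\cD_G = \cD_F$. Since $\lambda(-bM^{-1}) = 0 \in \interior(\cD_A)$, the point $-bM^{-1}$ lies in $\interior(\cD_F) = \interior(\cD_G)$, so $\LRE_G(-bM^{-1}) \succ 0$. Applying the affine construction in reverse, set $\sH_G := \LRE_G(-bM^{-1})^{-1/2}$ and $\tilde G := M^{-1} \cdotb (\sH_G G \sH_G)$; the set-equality portion of the theorem applied with $(A,b,M,\lambda)$ replaced by $(G, -bM^{-1}, M^{-1}, \mu)$ gives $\cD_{\tilde G} = \mu^{-1}(\cD_G) = \lambda(\cD_F) = \cD_A$, while $\tilde G$ has size $e'$, contradicting minimality of $L_A$; the converse comes from swapping $A$ and $F$. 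The main obstacle throughout is bookkeeping around the reverse change of variables, and specifically the identity $\LRE_F(-bM^{-1}) = \LRE_A(b)^{-1}$: it is what shows that reversing the transformation replaces $\sH$ by $\sH^{-1}$, places $\sH^{-1}$ inside $\mathcal{F}$, and makes the symmetry arguments in both (1) and (2) properly noncircular.
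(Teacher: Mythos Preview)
Your proof is correct and, for items \eqref{it:affineirr} and \eqref{it:affinemin}, takes a genuinely different route from the paper.

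For the set equality $\lambda^{-1}(\cD_A)=\cD_F$, you compute the pencil identity $\sH\LRE_A(\lambda(X))\sH=\LRE_F(X)$ directly; the paper simply cites this from \cite{AHKM18}.

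For \eqref{it:affineirr}, the paper argues via reducing subspaces, treating the cases $M=I$ and $b=0$ separately and combining them: if $\mathscr M$ reduces $A$ then it reduces $\LRE_A(b)$ and hence $\sH A\sH$; conversely, a reducing subspace $\mathscr N$ for $F=\sH A\sH$ is shown (by a short direct computation) to be invariant for $\LRE_A(b)^{-1}$, hence for $\sH$, hence for $A$. Your commutant argument is cleaner: since $\LRE_A(b)\in\mathcal A$ and $\mathcal A$ is a finite-dimensional $*$-algebra, functional calculus gives $\sH\in\mathcal A$, whence $\mathcal A'\subseteq\mathcal F'$; symmetry via the reverse change of variables (with $\sH$ replaced by $\sH^{-1}\in\mathcal F$) gives the other inclusion. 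This yields the stronger conclusion $\mathcal A=\mathcal F$, not just equivalence of indecomposability.

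For \eqref{it:affinemin}, the paper invokes the structure theory of Lemma~\ref{l:Z+}: it decomposes $L_A$ into irredundant indecomposable summands, applies \eqref{it:affineirr} to each summand, and observes that the bijection $\lambda$ preserves irredundancy. Your size-contradiction argument is more elementary and self-contained: it avoids the decomposition theorem entirely, using only the already-established set equality applied to the reverse change of variables. The paper's approach has the advantage of exhibiting how the indecomposable pieces correspond; yours is shorter and needs no structure theory.
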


\begin{proof}
Equation \eqref{eq:aff} is proved in \cite[\S 8.2]{AHKM18}.

Turning to item \eqref{it:affineirr}, let us first settle the special case $M=I$. 
If $L_A$ is not \irrL,  then there is a common
 non-trivial reducing subspace $\mathscr{M}$ for $A$.
 It follows that $\mathscr{M}$ is reducing for 
 $\LRE_A(b)$ and hence for $F=\sH A \sH$.

 Now suppose $L_F$ is not \irrL; that is,
 there is a non-trivial reducing subspace
 $\mathscr{N}$ for $F=\sH A\sH$.  Since 
\[
\sH (\LRE_A(b)-I)\sH = \sH (\Lambda_A(b)+\Lambda_A(b)^*)\sH 
  = \Lambda_{F}(b)+\Lambda_F(b)^*,
\]
 we conclude that
\[
 \big(I-\LRE_A(b)^{-1}\big)\mathscr{N}= \sH(\LRE_A(b) -I)\sH \mathscr{N}\subseteq \mathscr{N}.
\]
 Hence $\mathscr{N}$ is invariant for $\LRE_A(b)^{-1}$. Since 
 $\mathscr{N}$ is finite dimensional and $\LRE_A(b)^{-1}$ is invertible,
 $\LRE_A(b)^{-1}\mathscr{N} = \mathscr{N}$ and 
 consequently $\sH\mathscr{N}=\mathscr{N}$.  
 Because $F=\sH A \sH$ it is now evident that $\mathscr{N}$ 
 is reducing for $A$.

 Now consider the special case $b=0$. A subspace $\mathscr{M}$
 reduces $A$ if and only if it reduces $M\cdotb A$.  Combining
 these two special cases proves item \eqref{it:affineirr}.

Finally we prove item \eqref{it:affinemin}. 
By Lemma \ref{l:Z+}, $L_A$
is unitarily equivalent to 
$\bigoplus_{j=1}^\ell L_{A^j}$, where the  $L_{A^j}$ are  \irrL 
\mops.
Now 
$L_F$ is unitarily equivalent to 
$\bigoplus_{j=1}^\ell L_{F^j}$, where $F^j=M\cdotb (\sH A^j \sH)$. By item \eqref{it:affineirr}, each of these summands $L_{F^j}$ is \irrL. 
Furthermore, since $\Psi$ is bijective it is clear that  $\bigcap_{k\ne i} \cD_{A^k} \subset \cD_{A^i}$ if and only if  $\bigcap_{k\ne j} \cD_{F^k}\subseteq \cD_{F^j}.$ 
Therefore $\{L_{A^j}:1\le j\le \ell\}$ is irredundant if and only if $\{L_{F^j}: 1\le j\le \ell\}$ is irredundant. 
Hence $L_A$ is minimal for $\cD_A$ if and only if $L_F$ is minimal for $\cD_F$, again
by Lemma \ref{l:Z+}.
\end{proof}

\begin{example}\rm
Even with $M=I$, the property \eqref{it:affineirr} of Proposition \ref{prop:aff} fails for a general positive definite $\sH$ and $F$ as in \eqref{eq:aff}.
For example, let
\[
A=\begin{pmatrix}
2 & 4 & 2 & 0 \\
1 & 2 & 2 & 2 \\
0 & 0 & 2 & 4 \\
0 & 0 & 1 & 2
\end{pmatrix}, \qquad \sH=\begin{pmatrix}
2 & 1 & 0 & 0 \\
1 & 2 & 1 & 0 \\
0 & 1 & 2 & 1 \\
0 & 0 & 1 & 2
 \end{pmatrix}^{-1}.
\]
Then $L_A$ is \irrL, but since
\[
F=\begin{pmatrix}
0 & 1 & 0 & 0 \\
0 & 0 & 0 & 0 \\
0 & 0 & 0 & 1 \\
0 & 0 & 0 & 0
\end{pmatrix},
\]
the \mop $L_F$ is clearly not.
\qed
\end{example}

\begin{remark}\rm
  Suppose  $E\in M_{d\times e}(\C)^g$ and $C\in M_g(\C)$
  is invertible. If  $E$ is \bmin, then ${C \cdotb E}$
 (see equation \eqref{e:MdotB}) is \bmin.
\qed \end{remark}

\section{Characterizing bianalytic maps between spectrahedra}
\label{sec:bianalhedra}
In this section we prove Theorem \ref{thm:sdmain} and
Proposition \ref{t:hairspans}, stated as 
 Propositions \ref{p:hairspans} and \ref{p:hairhyperspans} below.
A major accomplishment,  exposited in Subsection \ref{sssec:eig}, is the 
reduction of the eig-generic type hypotheses of \cite{AHKM18}
to various natural and cleaner algebraic
conditions on the corresponding pencils defining spectrahedra.

\begin{lemma}\label{lem:Zdense}
Let $L_A$ be a \mop. The set $
\{(X,X^*)\midx X\in \Zre_{L_A}(n)\}$ 
	is Zariski dense in the set
	$\cZ_{L_A}(n)$
	 for every $n$. Likewise, $\{(X,X^*)\midx X\in \cZ_{Q_A}^{\rm re} (n)\}$
   is Zariski dense in  $\cZ_{Q_A} (n)=\{(X,Y)\in M_n(\C)^{2g}: \det Q_A(X,Y)=0\}$.
\end{lemma}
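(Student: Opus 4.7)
The plan is to translate the Zariski density statement into a question in real algebraic geometry via a $\C$-linear change of coordinates, and then exploit the Hermitian structure of $\Lre_A$ through a totally real tangent space argument.

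First I would introduce the $\C$-linear automorphism $\Psi$ of $M_n(\C)^{2g}$ defined by $\Psi(U,V)=(U+iV,\,U^T-iV^T)$. Under $\Psi$ the real subspace $\{(U,V):U,V\in M_n(\R)^g\}\cong \R^{2gn^2}$ is carried bijectively onto $\{(X,X^*):X\in M_n(\C)^g\}$. Pulling $\det L_A$ back yields $\tilde f(U,V):=\det L_A(\Psi(U,V))=\det\Lre_A(U+iV)$, which takes real values on $\R^{2gn^2}$ because $\Lre_A$ is Hermitian. Since a complex polynomial that is real valued on $\R^N$ automatically has real coefficients, $\tilde f\in \R[U,V]$. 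The lemma is then equivalent to the assertion that $\{\tilde f=0\}\cap\R^{2gn^2}$ is Zariski dense in $\{\tilde f=0\}\subseteq\C^{2gn^2}$.

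Next I would produce a smooth real point. If $A=0$ then both sides of the claim are empty, so assume $A\ne 0$. The free spectrahedron $\cD_A(n)$ contains $0$ in its interior but is proper, so any ray out of $0$ that exits $\cD_A(n)$ meets $\partial\cD_A(n)\subseteq \Zre_{L_A}(n)$ in a first point $X_0$ where $\Lre_A(X_0)$ is positive semidefinite with one-dimensional kernel. By the Jacobi formula for $d\det$, this forces $\nabla_\R \det\Lre_A(X_0)\ne 0$, so $(U_0,V_0):=\Psi^{-1}(X_0,X_0^*)$ is a smooth real point of $\{\tilde f=0\}$. At $(U_0,V_0)$ the real tangent space $W$ of the real locus lies in the complex tangent space $T$ of the complex locus, and a direct calculation shows $W\cap iW=\{0\}$: writing an element of the intersection as $(\dot X,\dot X^*)=i(\dot Z,\dot Z^*)$ forces both $\dot X=i\dot Z$ and $\dot X^*=i\dot Z^*$, which combine to give $\dot Z=0$. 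Thus $T=W+iW$ is the complexification of $W$, so any complex polynomial vanishing on the real locus near $(U_0,V_0)$ vanishes on a neighborhood of $(U_0,V_0)$ in the complex locus by the standard Taylor expansion argument for holomorphic functions on totally real submanifolds. Irreducibility then propagates this vanishing to the whole irreducible component of $\{\tilde f=0\}$ containing $(U_0,V_0)$.

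The hardest step will be confirming that every irreducible component of $\{\tilde f=0\}$ is reached by a smooth real point in this way. For this I would use the involution $(X,Y)\mapsto(Y^*,X^*)$ on $\C[X,Y]$: since $L_A(Y^*,X^*)=L_A(X,Y)^*$ gives $\det L_A(Y^*,X^*)=\overline{\det L_A(X,Y)}$, this involution preserves $\det L_A$; combined with uniqueness of irreducible factorization over $\C[X,Y]$, it forces each irreducible factor of $\tilde f$ (after pulling back through $\Psi$) to be, up to a scalar, a real polynomial. A real irreducible factor that vanishes somewhere on $\R^{2gn^2}$ must change sign, so its real zero locus has real codimension one and contains a smooth real point by the argument of the previous paragraph, recovering density on every component.

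Finally, the second assertion of the lemma reduces to the first: setting $F=\begin{pmatrix}0&A\\0&0\end{pmatrix}$, the monic pencil $L_F$ coincides with $\M_A$, and by \eqref{e:sa} we have $\det L_F=\det Q_A$ up to a unit. Therefore $\cZ_{Q_A}(n)=\cZ_{L_F}(n)$ and $\Zre_{Q_A}(n)=\Zre_{L_F}(n)$, and the density for $Q_A$ follows from the density for $L_F$ already established.
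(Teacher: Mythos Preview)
Your approach is self-contained, whereas the paper simply cites \cite[Proposition~5.2]{KV} for the first assertion and deduces the second from it (your reduction of the $Q_A$ statement to the $L_F$ statement via $F=\begin{pmatrix}0&A\\0&0\end{pmatrix}$ and \eqref{e:sa} is correct and matches the paper).  The coordinate change $\Psi$ and the totally real tangent-space argument at a smooth real point are fine; the problem is Step~5.

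First, the involution $(X,Y)\mapsto(Y^*,X^*)$ gives you nothing new: under $\Psi$ it becomes ordinary complex conjugation $(U,V)\mapsto(\bar U,\bar V)$, so all you recover is that $\tilde f$ has real coefficients, which you already knew. Real coefficients only force conjugation to \emph{permute} the irreducible $\C$-factors; they may still occur in conjugate pairs $p,\bar p$ rather than individually real. Second, even for a real factor the implication ``has a real zero $\Rightarrow$ changes sign'' is false: $x_1^2+x_2^2+x_3^2$ is irreducible over $\C$, real, vanishes at $0$, and is nonnegative. In either failure mode the real locus on the offending component has real codimension $\ge 2$ and your density argument cannot reach it.

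The missing structural fact is hyperbolicity. For every real $(U_0,V_0)$ one has
\[
\tilde f\big(t(U_0,V_0)\big)=\det\Lre_A\big(t(U_0+iV_0)\big)=\det(I+tH),\qquad H=\Lambda_A(U_0+iV_0)+\Lambda_A(U_0+iV_0)^*,
\]
and since $H$ is Hermitian this polynomial in $t$ has only real roots. Hence every irreducible $\C$-factor $p$ of $\tilde f$ also has only real roots along every real line through the origin; normalizing $p(0)=1$ this forces $p$ to be real-valued on $\R^{2gn^2}$, so $p$ has real coefficients, and along a generic real line $p$ has a simple real root, so $p$ changes sign. Now your smooth-point argument applies to each component. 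This hyperbolicity is exactly what the proof in \cite{KV} exploits; once you insert it, your outline goes through.
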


\begin{proof}
The first statement holds by \cite[Proposition 5.2]{KV}.
 The second follows immediately from the first.
\end{proof}

\subsection{The detailed boundary}
Let $\rho$ be a \df{hermitian $d \times d$  free matrix  polynomial} with $\rho(0) =I_d$. 
Thus $\rho \in \pxy^{d\times d}$ and $\rho(X,X^*)^*=\rho(X,X^*)$ for all $X\in M(\C)^g$. 
The \df{detailed boundary} of $\cD_{\rho}$ is the sequence of sets \index{$\widehat{\partial \cD_{\rho}}$}
$$\widehat{\partial \cD_{\rho} }(n) := 
\left\{
(X,v)\in\mat{n}^g\times (\C^{dn}\setminus\{0\}) \midx X \in \partial \cD_\rho , \  \rho(X, X^*) v = 0 
\right\}$$
over $n\in\N$. The nomenclature and notation are somewhat misleading  
 in that   $\widehat{\partial D_{\rho} }$
is not determined by the set $ \cD_{\rho} $ but
by its defining polynomial $\rho$. Denote also
\[
\widehat{ \partial^1 \cD_{\rho}}(n) := \left\{
(X,v) \in \widehat{ \partial \cD_{\rho} }(n) \midx \dim\ker(\rho(X,X^*))=1
\right\}.
\]
For $ (X,v) \in \widehat{ \partial^1 \cD_{\rho}(n) }$, 
 we call $v$ the \index{$\widehat{ \partial^1 \cD_{\rho}(n)}$}
\df{hair} at $X$.
Letting 
\[
\pi_1:\mat{n}^g\times \C^{dn}\to\mat{n}^g \quad\text{ and }\quad \pi_2:\mat{n}^g\times \C^{dn}\to\C^{dn}
\]
 denote the  canonical projections, set 
$$ \partial^1 \cD_{\rho}(n)= \pi_1\left(\widehat{ \partial^1 \cD_{\rho}}(n)\right),
\qquad \hair \cD_{\rho}(n)= \pi_2\left(\widehat{ \partial^1 \cD_{\rho}}(n)\right).$$  
Observe  $\widehat{\partial \cB_E }(n):=  \widehat{\partial \cD_{Q_E} }(n)$, etc.

\subsubsection{Boundary hair spans}
In this subsection we connect the notion of boundary hair to \bmin{}ity. 
Given a tuple $E\in M_{d\times e}(\C)^g,$ a subset $\scS\subset \widehat{ \partial^1 \cB_E}$  is
closed under unitary similarity if for each $n$, each $(X,v)\in \widehat{ \partial^1 \cB_E}(n)$
and each $n\times n$ unitary $U$, we have $(UXU^*,(I_e\otimes U)v)\in \scS(n)$. Assuming
$\scS \subset \widehat{ \partial^1 \cB_E}$ is closed under unitary similarity, let
\[
 \pi(\hair \scS) = 
  \Big\{u\in \C^e: \exists n\in\N,\, \exists v\in \scS(n)\cap \hair \cB_E(n): \,
    v = u\otimes e_1 +\sum_{j=2}^n u_j \otimes e_j \Big\},
\]
where $\{\ee_1,\dots,\ee_n\}$ is the standard basis for $\C^n$.
Because $\scS$ is invariant under unitary similarity, 
the definition of $\pi(\hair \scS)$ does not actually depend on the choice of
 orthonormal basis
for $\C^n$.
Thus, for instance, $\pi(\hair{ \partial^1 \cB_E})$ is the set of those vectors
$u\in \C^e$ such that there exists an $n$, a pair $(X,v)\in M_n(\C)^g \oplus [\C^e \otimes \C^n]$
and a unit vector $h\in \C^n$ such that $\Qre_E(X)\succeq 0$, $\dim\ker(\Qre_E(X))=1$,  $\Qre_E(X)v=0$ and
$u= (I_e\otimes h^*)v.$  For notational convenience we write $\pi(\hair\cB_E)$ as shorthand
 for $\pi(\hair \widehat{\partial^1\cB_E}).$  \index{$\pi(\hair\cB_E)$}
 
\begin{prop}
\label{p:hairspans}
A tuple  $E\in M_{d\times e}(\C)^g$ is \bmin  if and only if $\pi(\hair\cB_E)$ 
 spans $\C^e$ and $\ker(E^*)=\{0\}.$  Moreover, if $\pi(\hair\cB_E)$ spans $\C^e,$
 then there exists a positive integer $r$\footnote{While it is not needed
 here, $r$ can be chosen at most $e.$} and pairs 
  $(\alpha^a,\gamma^a) \in \widehat{\partial^1 \cB_E(r)}$ for 
  $1\le a\le e$ such that, writing 
  $\gamma^a = \sum_{t=1}^r \delta^a_t\otimes e_t\in \C^e\otimes \C^r$
  the set $\{\delta^a_1: 1\le a\le e\}$ spans $\C^e.$
\end{prop}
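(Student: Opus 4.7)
The plan is to reduce \bminy of $E$ to minimality of the \hmm pencil $\Mre_E$ for $\cB_E$ via Lemma \ref{l:trans}\eqref{it:lt3}, and then translate the standard ``boundary kernel vectors span the ambient space'' criterion for minimal pencils (in the spirit of Lemma \ref{l:Z+} and \cite{HKM, Zal17}) through the factorization of Lemma \ref{l:trans}\eqref{it:lt1}. That factorization yields an explicit bijection $b\mapsto (-\Lambda_E(X)b,\, b)$ between $\ker \Qre_E(X,X^*)$ and $\ker \Mre_E(X,X^*)$, under which $\pi(\hair \cB_E)\subseteq \C^e$ corresponds to the $\C^e$-component of sliced kernel vectors of $\Mre_E$ at points of $\widehat{\partial^1 \cB_E}$.

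For the forward direction, suppose $E$ is \bmin. I would first observe that $\ker(E)=\ker(E^*)=\{0\}$, since otherwise isometrically compressing onto $(\ker(E))^\perp$ or $\ran(E)$ strictly reduces $e$ or $d$ without altering $\cB_E$. To show $\pi(\hair \cB_E)$ spans $\C^e$, I would argue by contradiction: if a unit vector $w\in \C^e$ satisfies $w\perp \pi(\hair \cB_E)$, then unitary-similarity invariance upgrades this to $\ker \Qre_E(X,X^*)\subseteq w^\perp\otimes \C^n$ at every $(X,\gamma)\in \widehat{\partial^1 \cB_E}(n)$. With $V\colon \C^{e-1}\to \C^e$ an isometric embedding onto $w^\perp$ and $E':=(E_1V,\dots,E_gV)\in M_{d\times(e-1)}(\C)^g$, the inclusion $\cB_E\subseteq \cB_{E'}$ is clear; for the reverse, any $X\in \cB_{E'}\setminus \cB_E$ scaled along $[0,X]$ yields $Y\in\partial \cB_E$ with a kernel vector of $\Qre_E(Y,Y^*)$ having nonzero $w$-component, and Lemma \ref{lem:Zdense} (together with the resulting density of $\widehat{\partial^1 \cB_E}$ inside $\widehat{\partial \cB_E}$) then locates a $\partial^1$-point with the same feature, contradicting the choice of $w$.

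For the converse, assume $\pi(\hair \cB_E)$ spans $\C^e$ and $\ker(E^*)=\{0\}$. With $F=\bigl(\begin{smallmatrix}0 & E\\ 0 & 0\end{smallmatrix}\bigr)$ on $\C^d\oplus\C^e$, I would take $\sM\subseteq \C^{d+e}$ to be the smallest $\{F_j,F_j^*\}$-invariant subspace containing all slices $(I\otimes h^*)\gamma$ for $\gamma\in \ker \Mre_E(X,X^*)$ at boundary points. By the bijection above, the $\C^e$-projection of $\sM$ already contains $\pi(\hair \cB_E)=\C^e$, hence equals $\C^e$. Applying each $F_j$ to a representative of $\sM$ with arbitrary $\C^e$-component gives $(E_jv_e,0)\in \sM$ for every $v_e\in \C^e$, so $\sM\supseteq \bigl(\sum_j\ran(E_j)\bigr)\oplus\{0\}$, and $\ker(E^*)=\{0\}$ forces $\sum_j\ran(E_j)=\C^d$. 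Thus $\sM=\C^{d+e}$, which by the standard minimality criterion makes $\Mre_E$ minimal, and Lemma \ref{l:trans}\eqref{it:lt3} yields \bminy of $E$.

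For the moreover, I would choose a basis $u^1,\dots,u^e$ of $\C^e$ contained in $\pi(\hair \cB_E)$; each $u^a$ arises from some $(X^a,v^a)\in \widehat{\partial^1 \cB_E}(n_a)$ as $u^a=(I_e\otimes h_a^*)v^a$ for a unit vector $h_a\in \C^{n_a}$. A unitary conjugation $X^a\mapsto U_aX^aU_a^*$ with $U_ae_1=h_a$ reduces to $u^a=\delta^a_1$ in the expansion $v^a=\sum_{t=1}^{n_a}\delta^a_t\otimes e_t$. To equalize levels, set $r=\max_a n_a$ and pad $X^a\mapsto X^a\oplus 0_{r-n_a}$, $v^a\mapsto v^a\oplus 0$; since $\Qre_E(X^a\oplus 0)=\Qre_E(X^a)\oplus I_{(r-n_a)e}$, the one-dimensional kernel persists at level $r$ and the first component remains $u^a$. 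The hard part will be the density step in the forward direction: upgrading ``kernel vectors avoid $w$'' from $\widehat{\partial^1 \cB_E}$ to all of $\widehat{\partial \cB_E}$ via Lemma \ref{lem:Zdense} requires careful control over how kernel vectors deform as higher-dimensional kernels generically split into one-dimensional ones.
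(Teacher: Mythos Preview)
Your forward direction shares the paper's compression idea (restrict $E$ to $w^\perp$ and show $\cB_E=\cB_{EV}$), but the density step is misidentified. Lemma~\ref{lem:Zdense} asserts that $\{(X,X^*)\}$ is Zariski dense in $\cZ_{L_A}$; it says nothing about $\partial^1\cB_E$ being dense in $\partial\cB_E$. What the paper actually uses is \cite[Corollary~8.5]{HKV}: since $E$ is \bmin, $\Mre_E$ is minimal by Lemma~\ref{l:trans}\eqref{it:lt3}, and then $\{(X,X^*):X\in\partial^1\cB_E\}$ is Zariski dense in $\cZ_{\M_E}$. Rather than perturbing a specific $Y\in\partial\cB_E\cap\interior(\cB_{E'})$ to a $\partial^1$ point while tracking its kernel vector---which is delicate when $\dim\ker\Qre_E(Y)\ge 2$ and requires Euclidean density of $\partial^1$ in $\partial\cB_E$, not obviously a consequence of Zariski density on $\cZ_{\M_E}$---the paper observes that membership in $\cZ_{\M_{EW}}$ is a Zariski-closed condition, deduces $\cZ_{\M_E}\subseteq\cZ_{\M_{EW}}$ from density, and then obtains $\cB_{EW}\subseteq\cB_E$ by convexity. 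Your route would need both the correct density input and the perturbation control you flag as ``the hard part.''

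Your converse takes a genuinely different route from the paper's. The paper argues the contrapositive via the structure theorem Lemma~\ref{l:trans}\eqref{it:lt7}: if $F$ is not \bmin but $\ker(F^*)=\{0\}$, write $F$ (after unitaries) as $E\oplus R$ with $E$ \bmin and $\cB_E\subseteq\cB_R$, and show every $\partial^1$-hair lives in the $E$ block, so $\pi(\hair\cB_F)\subseteq\C^{e'}\oplus\{0\}$. Your direct approach can work, but the ``standard minimality criterion'' you invoke is not stated in the paper and needs a short argument: if $\Lre_F$ is not minimal, Lemma~\ref{l:Z+} produces a proper reducing subspace $\sM_0$ with $F|_{\sM_0}$ minimal for $\cD_F$; at any $X\in\partial^1\cD_F$ the one-dimensional kernel of $\Lre_F(X)$ must lie in $\sM_0\otimes\C^n$ (since $\Lre_{F|_{\sM_0}}(X)$ is already singular), whence $\sM\subseteq\sM_0\subsetneq\C^{d+e}$. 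You must build $\sM$ from $\partial^1$-hair only: if you allow all boundary points, two copies of the same minimal pencil give $\sM$ equal to the whole space while $\Lre_F$ is not minimal.

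Your moreover argument coincides with the paper's.
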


\begin{proof}
 Suppose $E$ is \bmin and let $e^\prime \le e$ denote the dimension of
 the span of  $\pi(\hair\cB_E).$ 
Let 
$$\cT_E=\{(X,X^*)\colon X\in \partial^1 \cD_{Q_E}=\partial^1 \cB_E\}.$$
 Let $W$ denote the inclusion of $\spann\pi(\hair\cB_E)$ into $\C^e$. 
Observe that
\[
W^*Q_E(x, y)W =W^*W-W^*\Lambda_{E^*}(y)\Lambda_E(x)W=Q_{EW}(x, y).
\]
Thus $\cB_E\subset\cB_{EW}$ and moreover
$(X,v) \in\widehat{\partial^1 \cD_{Q_E} =\partial^1\cB_E}$
 implies 
$$\Qre_{EW}(X) (W^*\otimes I)v=(W^*\otimes I)\Qre_E(X) v=0,
$$
so $\cT_E \subseteq \cZ_{\M_{EW}}$. Since $\partial^1 \cD_{\M_E}=\partial^1 \cD_{Q_E}=\partial^1\cB_E$ 
by equation \eqref{e:sa}, 
$\Mre_E$ (equivalently $\M_E$) is minimal by Lemma \ref{l:trans}\eqref{it:lt3}, and
$\cT_E$ is Zariski dense in $\cZ_{\M_E}$  by \cite[Corollary 8.5]{HKV},
 it follows that   $\cZ_{\M_E} \subseteq  \cZ_{\M_{EW}}$. 
 Since are convex sets containing $0$ in their interiors, and their boundaries are
 contained in  $\cZ_{\M_E}$ and $\cZ_{\M_{EW}}$ respectively, 
 the inclusion $\cZ_{\M_E}\subset \cZ_{\M_{EW}}$ implies $\cB_{EW}\subseteq\cB_E$. 
 Indeed, if $X\in \cB_{EW}$ but $X\notin\cB_E,$ then there 
 is a $0<t<1$ such that $tX\in \partial \cB_E \cap \cB_{EW}.$
 Thus $(tX,tX^*)\in \cZ_{\M_E}\subset \cZ_{\M_{EW}}.$  Consequently
 $\Qre_{EW}(tX)$ has a kernel and finally $\Qre_E(X)\not\succeq 0,$
 contradicting $X\in \cB_{EW}.$
 Hence $E$ and $EW$ define the same spectraball. Since  $EW$ is a $d\times e'$-tuple 
 and $E$ is \bmin and $d\times e$, Lemma \ref{l:trans}\eqref{it:lt7}
 implies $e'\ge e.$ Thus $e^\prime=e$ and $\pi(\hair \cB_E)$ spans 
 $\C^e.$ %
  If $\ker(E^*)\ne \{0\}$ then $E$ is not \bmin.
 Hence we have shown, if $E$ is \bmin, then $\pi(\hair\cB_E)$ spans 
and $\ker(E^*)=\{0\}.$

To prove the converse, suppose  $F\in M_{k\times \ell}(\C)^g$ is not \bmin, but $\ker(F^*)=\{0\}.$  
Let $\shair_F \subset \C^\ell$ denote the span of $\pi(\hair\cB_F).$
It suffices to show
$\shair_F\ne \C^\ell$. 
Let $E\in M_{d\times e}(\C)^g$ be \bmin with $\cB_F=\cB_E$.  By Lemma \ref{l:trans}\eqref{it:lt7},
 $d\le k$ and $e\le \ell$ and, letting $d^\prime = k-d$ and $e^\prime = \ell -e,$  there is a tuple
$R\in M_{d^\prime\times e^\prime}(\C)^g$ and  $k\times k$ and  $\ell\times \ell$ unitary matrices
 $U$ and $V$ respectively so that equation \eqref{e:FUERV} holds
and $\cB_E\subset \cB_R$.
 Note that $e^\prime \ne 0$  since  $\ker(F^*)=\{0\}$ and further
\[
Q_F = V^* \begin{pmatrix} Q_E &0 \\ 0 & Q_R \end{pmatrix} V = V^* (Q_E\oplus Q_R) V.
\]
Without loss of generality, we may assume $V=I$. 

Suppose $X\in \partial^1 \cB_F(n)$ and $0\ne v \in \C^\ell \otimes \C^n$ is in the kernel
of $\Qre_F(X)$.
With respect to the decomposition of
$\C^\ell \otimes \C^n  = [\C^e \otimes \C^n]  \oplus [\C^{e^\prime} \otimes \C^n]$, decompose
$v= u\oplus u^\prime$. It follows that $0=\Qre_F(X) v = \Qre_E(X)u \oplus \Qre_R(X)u^\prime$ and hence
both $\Qre_E(X)u=0$ and $\Qre_R(X) u^\prime =0$.  Therefore, 
$\left(\begin{smallmatrix} 0\\u^\prime \end{smallmatrix}\right)$ is in the kernel
of $\Qre_F(X)$.  On the other hand, $X\in \partial \cB_E(n)$. Hence there is
a $0\ne w\in \C^e\otimes \C^n$  such that $\Qre(X)w=0.$ Thus 
$0\ne \left(\begin{smallmatrix} w\\0 \end{smallmatrix}\right)$ is in the kernel
of $\Qre_F(X).$ Since the dimension of the kernel of $\Qre_F(X)$ is one, $u^\prime =0$
and therefore $\shair_F \subset \C^e \oplus \{0\} \subsetneq \C^e\oplus \C^{e^\prime} =\C^\ell.$

To prove the moreover portion of the proposition, 
 note that the assumption that the $\pi(\hair\cB_E)$ spans
 implies the existence of $n_1,\dots,n_e\in\mathbb N$ and
  pairs $(\alpha^a,\gamma^a)\in M_{n_a}(\C)^g\times [\C^e\otimes \C^{n_a}]$
  such that, writing $\gamma^a = \sum_{t=1}^{n_a} \delta^a_t\otimes e_t$,
 the set $\{\delta^a_1:1\le a\le e\}$ spans $\C^e.$ By choosing
  $r=\max\{n_a:1\le a\le e\}$ and padding $\delta^a$ and $\gamma^a$
  by zeros as needed, it can be assumed that $n_a=r$ for all $a.$
\end{proof}

\subsection{From basis to hyperbasis}

Call an $e+1$-element  subset $\mathcal U= \{u^1,\dots,u^{e+1}\}$ of $\C^e$  a \df{hyperbasis} if each
$e$-element subset of $\mathcal U$ is a basis.  
This notion critically enters the genericity 
conditions considered in \cite{AHKM18}.

\begin{lemma}
\label{l:hyperhair}
Given  $E\in M_{d\times e}(\C)^g$ and $n\in\N,$ if $\cZ_{Q_E}(n)$ is an 
irreducible hypersurface in $M_n(\C)^{2g}$, 
\[
   \{(X,X^*)\colon X\in\partial^1\cB_E(n)\}
\]
is Zariski dense in $\cZ_{Q_E}(n)$, and $\pi(\hair \cB_E)$
spans $\C^e,$ then $\pi(\hair\cB_E)$ contains a hyperbasis for $\C^e$.
\end{lemma}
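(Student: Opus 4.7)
The plan is to realize the hair vectors at level $n$ as (up to scalars) the image of a rational map from an irreducible variety onto an irreducible subvariety $V \subseteq \mathbb P^{e-1}$, show that $V$ spans $\C^e$ linearly, and then extract the extra vector needed to convert a given basis into a hyperbasis by a general-position argument inside $V$.

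First I would form the incidence variety
$$I_0 = \overline{\bigl\{((X,Y),[v]) \in \cZ_{Q_E}(n)\times\mathbb P^{en-1}: \dim\ker Q_E(X,Y)=1,\ Q_E(X,Y)v=0\bigr\}}.$$
Since $\cZ_{Q_E}(n)$ is irreducible by hypothesis and the projection $I_0\to\cZ_{Q_E}(n)$ is an isomorphism over the (Zariski dense) open locus where the kernel is one-dimensional, $I_0$ is irreducible. Let $\phi: I_0\dashrightarrow \mathbb P^{e-1}$ be the rational map $((X,Y),[v])\mapsto[v_1]$ extracting the first $\C^e$-block of $v\in\C^e\otimes\C^n$, and set $V:=\overline{\phi(I_0)}$; then $V$ is irreducible. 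The Zariski density of $\{(X,X^*):X\in\partial^1\cB_E(n)\}$ in $\cZ_{Q_E}(n)$ lifts to density of the corresponding real points in $I_0$, and hence their $\phi$-images are Zariski dense in $V$; these images represent (up to scalars) vectors in $\pi(\hair\cB_E(n))\subseteq \pi(\hair\cB_E)$.

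The heart of the proof is to show that $V$ is not contained in any hyperplane of $\mathbb P^{e-1}$. Suppose to the contrary that $V\subseteq \mathbb P(c^\perp)$ for some nonzero $c\in\C^e$. Using the unitary invariance of $\widehat{\partial^1\cB_E}(n)$---conjugating $X$ by a unitary $U$ on $\C^n$ replaces the first block of $v$ by $\sum_t U_{1t}v_t$, which sweeps over $\spann\{v_1,\dots,v_n\}$ as $U$ varies---the containment forces $v_t\in c^\perp$ for every $t$, i.e., $v\in c^\perp\otimes\C^n$, and thus $\pi(\hair\cB_E(n))\subseteq c^\perp$. I would now replay the argument from the proof of Proposition \ref{p:hairspans} at level $n$: the Zariski density pushes this kernel-containment from the real diagonal to the full hypersurface $\cZ_{Q_E}(n)\subseteq \cZ_{Q_{E|_{c^\perp}}}(n)$, and the standard convexity argument (spectraballs have $0$ in their interior and boundaries lying in the $\cZ$) yields $\cB_E(n)=\cB_{E|_{c^\perp}}(n)$. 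Combining the lift-up $(X,v)\mapsto(X\oplus 0,\,v\oplus 0)\in \widehat{\partial^1\cB_E}(n)$ for $m\le n$ with the free-set structure of spectraballs to propagate containment across levels produces $\pi(\hair\cB_E)\subseteq c^\perp$, contradicting the hypothesis. This propagation from one level to all levels is the step I expect to be the main obstacle, since it requires extracting global (all-level) information from the single level $n$ where irreducibility and density are assumed.

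Finally, with $V$ known to span $\C^e$, pick a basis $u^1,\dots,u^e$ of $\C^e$ with each $[u^i]\in V$ and representatives $u^i\in\pi(\hair\cB_E(n))$; this is possible because the cone over $V$ spans $\C^e$ and $\pi(\hair\cB_E(n))$ is Zariski dense in $V$ projectively. For each $i$, the hyperplane $H_i=\spann\{u^j:j\ne i\}$ does not contain $u^i\in V$, so $V\cap\mathbb P(H_i)$ is a proper closed subvariety of the irreducible $V$; hence $V\setminus\bigcup_{i=1}^e\mathbb P(H_i)$ is Zariski open and dense in $V$. By Zariski density of the real images, there is $u^{e+1}\in\pi(\hair\cB_E(n))$ with $[u^{e+1}]\in V$ and $u^{e+1}\notin H_i$ for every $i$. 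Then every $e$-element subset of $\{u^1,\dots,u^{e+1}\}$ is a basis of $\C^e$, so $\{u^1,\dots,u^{e+1}\}$ is the desired hyperbasis inside $\pi(\hair\cB_E)$.
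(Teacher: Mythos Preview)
Your overall architecture---build an irreducible object out of $\cZ_{Q_E}(n)$, project to $\mathbb P^{e-1}$, and extract the extra vector by a general-position argument against the $e$ hyperplanes $H_i=\spann\{u^j:j\ne i\}$---matches the paper's strategy. The paper's execution is more concrete: instead of an abstract incidence variety and a rational map $\phi$, it writes down an explicit polynomial map $u:\cZ_{Q_E}\to\C^e$ using columns of the adjugate $\adj(Q_E(X,Y))$ (combined with a generically chosen linear functional to avoid accidental vanishing), so that for $X\in\partial^1\cB_E$ the value $u(X,X^*)$ is a scalar multiple of the first $\C^e$-block of the kernel vector. Writing $u=\sum_k r_k\,u(X^k,X^{k*})$ in the basis coming from Proposition~\ref{p:hairspans}, the paper uses irreducibility exactly once to conclude that $r_1\cdots r_e$ does not vanish identically on the real locus, and this produces $X^0$. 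Your final step is precisely this argument, phrased projectively.

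The substantive difference is your ``heart of the proof.'' You devote it to showing that $V$ linearly spans, via a contradiction that---as you yourself flag---runs into a level-propagation problem (passing from $\cB_E(n)=\cB_{EW}(n)$ to the free equality $\cB_E=\cB_{EW}$, or equivalently from $\pi(\hair\cB_E(n))\subseteq c^\perp$ to $\pi(\hair\cB_E)\subseteq c^\perp$). The paper never faces this: it simply takes the basis $\{\delta^k_1\}$ supplied directly by Proposition~\ref{p:hairspans} (at some level $r$, implicitly padded to level $n$) and goes straight to the general-position step. You can and should do the same: those basis vectors are honest hair at level $n$ after padding, their projective classes lie in your $V$, and hence $V$ spans automatically---your contradiction detour, and its attendant obstacle, evaporate. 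Both the paper and your argument tacitly need $n$ large enough to accommodate this padding (i.e.\ $n\ge r$, and $r\le e$ by the footnote to Proposition~\ref{p:hairspans}); the paper quietly conflates $r$ and $n$, and in the only application (Proposition~\ref{p:hairhyperspans}) $n$ is chosen large anyway.
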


\begin{proof}
 By Proposition \ref{p:hairspans} there exist a positive integer $r$,
 tuples  $X^1,\dots,X^e\in\partial^1\cB_E(r)$ and 
 vectors $\gamma^j=\sum_{t=1}^r \delta^j_t\otimes e_t\in \ker(\Qre_E(X^j))
  \subseteq \C^e\otimes\C^r,$
 such that $\{\delta^j_1:1\le j\le e\}$ spans $\C^e.$ Note too that
  $\delta^j_1 = (I\otimes \basr_1^*)\delta^j,$ where
 $\{\basr_1,\dots\basr_r\}$ is the standard orthonormal basis for $\C^r.$

 If $X\in\partial^1\cB_E(n)$, then the adjugate matrix,  $\adj(\Qre_E(X)),$ 
 is of rank one and its range is  $\ker (\Qre_E(X))$. 
 Let $M_{(i)}$ denote the $i$-th column of a matrix $M$
 and suppose $\gamma =\sum_{t=1}^r \delta_t\otimes e_t$ spans $\ker(\Qre_E(X)).$
 It follows that  $(I\otimes \basr_1^*) \adj(\Qre_E(X^k))_{(i)} = \mu  \delta_1$
 for some $\mu\in \mathbb C.$ Moreover, 
 for every $k=1,\dots,e$ there exists $1\le i_k\le er$ such that 
 $\ker (\Qre_E(X^k)) = \spann (\adj \Qre_E(X^k))_{(i_k)},$ 
 and hence $(I\otimes e_1^*) \adj(\Qre_E(X^k))_{(i_k)} = \mu_k \delta^k_1$
 for some $\mu_k\ne 0.$ 
 Now consider
\begin{equation}\label{e:adj}
v(t,X,Y):=\sum_{k=1}^e t_k\, (I\otimes \basr_1^*) \adj(Q_E(X,Y))_{(i_k)} \in \C^e
\end{equation}
as a vector of polynomials in indeterminates $t=(t_1,\dots,t_e)$ and entries 
of $(X,Y)$ (i.e., coordinates of $\mat{r}^{2g}$).  Let $\{\base_1,\dots,\base_e\}$
 denote the standard basis for $\C^e$. 
For every $k$ we have
 $v(\base_k,X^k,X^{k*})=(I\otimes \basr_1^*)\adj(\Qre(X^k))_{(i_k)}  = \mu_k \delta^k_1 \ne 0.$
 Since the complements of zero sets are Zariski open and dense in the affine space,
 for each $k$ the set $U_k =\{t\in \C^g: v(t,X^k X^{k*})\ne 0\}\subset \C^g$ 
 is open and dense and thus so is $\bigcap_{k=1}^e U_k$. Hence there exists 
$\lambda\in\C^e$ such that $v(\lambda,X^k,X^{k*})\neq0$ for every $k$. 
 Now define the  map
\[
  u:\cZ_{Q_E}(n)\to \C^e, \qquad u(X,Y):=v(\lambda,X,Y).
\]
Note that $u$ is a polynomial map by \eqref{e:adj} and,
 for $X\in \partial^1\cB_E(r)$ and 
 $0\ne \delta =\sum_{t=1}^r \delta_t\otimes\basr_t\in \ker(\Qre_E(X)),$
\[
 u(X,X^*) =\sum_{s=1}^e \lambda_s (I\otimes \basr_1^*) \adj(\Qre_E(X))_{(i_s)}
      = \sum_{s=1}^e \lambda_s \nu_s \delta^1 = \nu\delta_1,
\] 
 for some $\nu\in\C.$ In particular, if $U(X,X^*)\ne 0,$ then 
 $u(X,X^*)\in \pi(\hair \cB_E).$
\[
 0\ne  u(X^k,X^{k*}) =\nu_k  \delta^1_k,
\]
 for each $k$ and hence  $u(X^1,X^{1*}),\dots,u(X^e,X^{e*})$ form a basis of $\C^e.$ 
 Therefore,
\[
  u(X,Y)=\sum_{k=1}^e r_k(X,Y) u(X^k,X^{k*}) 
\]
for $(X,Y)\in \cZ_{Q_E}(n)$, where $r_k$ are polynomial functions 
on $M_r(\C)^{2g}$.  In particular, $r_k(X^j,X^{j*})=\updelta_{j,k},$
 where $\updelta$ is  the Kronecker  delta function.
 \index{$\updelta$} \index{Kronecker delta function}

Suppose that the product $r_1\cdots r_e\equiv0$ on
$$\{(X,X^*)\colon X\in\partial^1\cB_E(n)\} \subset \cZ_{Q_E}.$$
Then $r_1\cdots r_e\equiv0$ on $\cZ_{Q_E}(n)$ by the  Zariski denseness hypothesis. 
Therefore $r_k\equiv0$ on $\cZ_{Q_E}(n)$ for some $k$ by the irreducibility hypothesis, contradicting
 $r_k(X^k,X^{k*})=1$.
Consequently there exists $X^0\in\partial^1\cB_E(n)$ such that 
$r_1(X^0,X^{0*})\cdots r_e(X^0,X^{0*})\neq0$. By the construction it follows
 that $\{u(X^0,X^{0*}),u(X^1,X^{1*}),\dots,u(X^e,X^{e*})\}
 \subseteq \pi(\hair\cB_E)$ forms a hyperbasis of $\C^e.$
\end{proof}

\begin{prop}\label{p:hairhyperspans}
Let $E\in M_{d\times e}(\C)^g$. Then $Q_E$ is an atom and $\ker (E)=\{0\}$
if and only if $\pi(\hair\cB_E)$ contains a hyperbasis of $\C^e$.
\end{prop}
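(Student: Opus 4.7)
The plan is to prove the two implications separately, handling the reverse direction via the block decomposition of Lemma \ref{l:trans}\eqref{it:lt8} together with a pigeonhole, and the forward direction by verifying the hypotheses of Lemma \ref{l:hyperhair}. The common starting observation, valid for every $(X,\gamma)\in\widehat{\partial \cB_E}$, is that $\Qre_E(X)\gamma=0$ rearranges as $\gamma=\Lambda_{E^*}(X^*)\Lambda_E(X)\gamma$, which lies in $\ran(E^*)\otimes\C^n=\ker(E)^\perp\otimes\C^n$; hence $\pi(\hair\cB_E)\subseteq\ker(E)^\perp$. Moreover, compressing $E$ to $\ran(E)\subseteq\C^d$ leaves $Q_E$, $\cB_E$, and the hair unchanged (as $Q_E$ depends only on $\Lambda_{E^*}\Lambda_E$), so at any stage I may assume $\ker(E^*)=\{0\}$ without loss of generality.

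For the reverse direction, assume $\pi(\hair\cB_E)$ contains a hyperbasis $\{u^1,\dots,u^{e+1}\}$. Since the hyperbasis spans, the opening observation gives $\ker(E)=\{0\}$ immediately. After the compression above, Proposition \ref{p:hairspans} identifies $E$ as \bmin. If $Q_E$ failed to be an atom, Lemma \ref{l:trans}\eqref{it:lt8} would decompose $Q_E=\bigoplus_{j=1}^k Q_{E^j}$ up to unitary equivalence, with $k\ge 2$ and $\C^e=\bigoplus_j\C^{e_j}$. Then $\Qre_E(X)$ is block diagonal; at any point of $\widehat{\partial^1\cB_E}$, its one-dimensional kernel concentrates in a single block, so $\pi(\hair\cB_E)\subseteq\bigcup_j\C^{e_j}$. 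A pigeonhole now finishes the argument: since $k\ge2$, for each $j$ some hyperbasis vector lies outside $\C^{e_j}$, and removing it leaves $e$ vectors that must be a basis of $\C^e$; in particular no more than $e_j$ of the hyperbasis vectors can sit in $\C^{e_j}$. Summing, $e+1\le\sum_j e_j=e$, a contradiction, so $Q_E$ must be an atom.

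For the forward direction, assume $Q_E$ is an atom and $\ker(E)=\{0\}$, and compress to arrange $\ker(E^*)=\{0\}$. Lemma \ref{l:trans}\eqref{it:lt5} gives that $E$ is \bmin, and Proposition \ref{p:hairspans} then yields that $\pi(\hair\cB_E)$ spans $\C^e$. To upgrade spanning to hyperspanning, I invoke Lemma \ref{l:hyperhair} at some sufficiently large $n$. The density hypothesis follows because Lemma \ref{lem:Zdense} supplies Zariski density of $\{(X,X^*):X\in\Zre_{Q_E}(n)\}$ in $\cZ_{Q_E}(n)$, while the locus inside $\Zre_{Q_E}(n)$ where either $\Qre_E(X)\not\succeq0$ or $\dim\ker\Qre_E(X)>1$ is a proper semi-algebraic subvariety, so $\{(X,X^*):X\in\partial^1\cB_E(n)\}$ itself remains Zariski dense in $\cZ_{Q_E}(n)$. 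For irreducibility of $\cZ_{Q_E}(n)$, I combine Lemma \ref{l:trans}\eqref{it:lt2}, which converts atomicity of $Q_E$ to \irrLy of $\M_E$, with the identity $\det\M_E=\det Q_E$ furnished by \eqref{e:sa}, and then invoke the standard noncommutative factorization result that an \irrL \mop has irreducible scalar determinant on $M_n(\C)^{2g}$ for all sufficiently large $n$.

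The main obstacle is this last irreducibility input: passing from the formal noncommutative atomicity of $Q_E$ (equivalently, \irrLy of $\M_E$) to honest irreducibility of the complex hypersurface $\cZ_{Q_E}(n)$ at a concrete $n$ is where substantive factorization theory must be imported, rather than derived from the elementary structure already on the table. The remaining ingredients---the $\ker(E)^\perp$ observation, the block-diagonal reduction, and the hyperbasis pigeonhole---are essentially bookkeeping once the right decomposition is in hand, and Lemma \ref{l:hyperhair} then produces the hyperbasis in a single step.
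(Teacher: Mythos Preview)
Your overall architecture matches the paper's: reduce to $\ker(E^*)=\{0\}$, handle $(\Leftarrow)$ via the block decomposition of $Q_E$, and handle $(\Rightarrow)$ by feeding irreducibility and Zariski density into Lemma~\ref{l:hyperhair}. The reverse direction is fine, and your observation $\pi(\hair\cB_E)\subseteq\ker(E)^\perp$ dispatches the $\ker(E)=\{0\}$ conclusion more cleanly than the paper does.

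The gap is in your density argument for the forward direction. You assert that the locus in $\Zre_{Q_E}(n)$ where $\Qre_E(X)\not\succeq 0$ or $\dim\ker\Qre_E(X)>1$ is a ``proper semi-algebraic subvariety,'' and conclude that its complement $\partial^1\cB_E(n)$ is Zariski dense in $\cZ_{Q_E}(n)$. This inference is not valid: the non-psd locus $\{X:\Qre_E(X)\not\succeq 0\}$ is Euclidean-open, not Zariski-closed, and complements of semi-algebraic (non-Zariski-closed) sets need not be Zariski dense even inside an irreducible variety. What actually makes the argument work is that $\partial^1\cB_E(n)$ is \emph{nonempty} (ball-minimality plus Proposition~\ref{p:hairspans} give points with one-dimensional kernel, and direct-summing with $0$ pushes them to any larger $n$) and is \emph{Euclidean-open} in $\Zre_{Q_E}(n)$ (eigenvalue continuity near a point with simple zero eigenvalue). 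A nonempty Euclidean-open piece of the real locus of an irreducible complex hypersurface whose real points are already Zariski dense (Lemma~\ref{lem:Zdense}) is itself Zariski dense. The paper does not spell this out either; it simply cites \cite[Corollary~8.5]{HKV} for precisely this step. So you have correctly identified the irreducibility of $\cZ_{Q_E}(n)$ as one external input, but the Zariski density of $\partial^1\cB_E(n)$ in $\cZ_{Q_E}(n)$ is a second real-algebraic-geometry input, not a formal consequence of Lemma~\ref{lem:Zdense}.
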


\begin{proof}
 Let $\iota$ denote the inclusion of $\rg(E)$ into $\C^d$ and let $\wE=\iota^*E.$
 Note that $\cB_E=\cB_{\wE}$ and thus $\pi(\hair \cB_E)=\pi(\hair\cB_{\wE}).$
 Further   $Q_E=Q_{\wE}$ and   $\ker(\wE)=\ker(E)$
 and $\ker(\wE^*)=\{0\}.$  It follow that $Q_E$ is an
 atom if and only if $Q_{\wE}$ is an atom; $\ker(E)=\{0\}$
 if and only if $\ker(\wE)=\{0\};$ and $\pi(\hair\cB_E)$ 
 contains a hyperbasis of $\C^e$ if and only if $\pi(\hair\cB_{\wE})$
 does. Thus, by replacing $E$ with $\wE$ we may assume that
 $\ker(E^*)=\{0\}.$

$(\Rightarrow)$ 
  Suppose $Q_E$ is an atom and $\ker (E)=\{0\}$ and $\ker(E^*)=\{0\}.$
  By  Lemma \ref{l:trans}\eqref{it:lt2}, $\M_E$ (equivalently
 $\Mre_E$) is indecomposable. By \cite[Proposition~3.12]{KV}\footnote{Irreducible in \cite{KV} is
 indecomposable here}, $\cZ_{\M_E}$ is
 an irreducible free locus. 
 By \cite[Corollary~3.6]{HKV}, $\cZ_{\M_E}(n)$ is an irreducible hypersurface  for large enough $n$.
 Thus, by \cite[Corollary~8.5]{HKV}, $\partial^1\cB_E(n)=\partial^1 \Qre_{E}(n)$
 is Zariski dense in $\Zre_{\Mre_E}(n)$ for large enough $n.$ 
 Thus $\{(X,X^*):X\in \partial^1\cB_E(n)\}$ is Zariski dense in 
  $\{(X,X^*): X\in M_n(\C)^g, \,  \det \Mre_E(X)=0\}$ for large enough $n.$ 
 By Lemma \ref{lem:Zdense} it now follows that $\{(X,X^*): X\in \partial^1\cB_E\}$
 is Zariski dense in $\cZ_{\M_E}=\cZ_{Q_E} =\{(X,Y): \det Q_E(X,Y)=0\}.$
 Thus the assumptions of
  Lemma \ref{l:hyperhair} are satisfied for some $n\in\N$, so 
   $\pi(\hair\cB_E)$ contains a hyperbasis for  $\C^e$.
 
$(\Leftarrow)$ Suppose $Q_E$ is not an atom. 
  If $E$ is not \bmin, then $\pi(\hair\cB_E)$ does not span $\C^e$
  by Proposition \ref{p:hairspans},
 since  $\ker(E^*)=\{0\}$. If $E$ is \bmin, 
then $\Mre_E$ is minimal but not \irrL by Lemma \ref{l:trans} items \eqref{it:lt2} and \eqref{it:lt3}. Thus  $\Mre_E$ decomposes non-trivially  as $\Mre_{E^1}\oplus \Mre_{E^2}$ by Lemma \ref{l:trans}\eqref{it:lt4}. Hence $Q_E$ decomposes as $Q_{E^1}\oplus Q_{E^2}$. Letting $e_i\ge 1$ denote the size of $Q_{E^i}$, 
$$\pi(\hair\cB_E)\subseteq
\left(\C^{e_1}\oplus \{0\}^{e_2}\right)\cup \left( \{0\}^{e_1}\oplus \C^{e_2} \right).$$
Thus $\pi(\hair\cB_E)$ cannot contain a hyperbasis for $\C^e=\C^{e_1}\oplus \C^{e_2}$.
\end{proof}

\begin{remark}\rm
\ben[\rm(1)]
\item
Note that $Q_E$ is an atom, $\ker (E)=\{0\}$ and $\ker (E^*)=\{0\}$ 
 (or equivalently, $\M_E$ is \irrL)
 if and only if the centralizer of
\[  
\bem 0 & {E_1}\\ 0 & 0  \eem, 
\dots
\bem 0 & {E_g} \\ 0 & 0  \eem, 
\bem 0 & 0 \\ {E_1^*}  & 0  \eem, 
\dots
\bem 0 & 0\\ {E_g^*}  & 0  \eem, 
\] 
is trivial. Verification of this fact amounts to checking whether a system of linear equations 
has a solution.
\item
If $\M_E$ is \irrL, then so is $L_E$. Indeed, if $L_E=L_{E^1}\oplus L_{E^2}$, then $\M_E$ equals $\M_{E^1}\oplus \M_{E^2}$ up to a canonical shuffle.

However, the converse is not true. For example,
with
$
\Lambda(x)=\bem 0 & x_2 \\ x_1 & 0\eem,
$
\[
I + \Lambda(x) + \Lambda^*(y)= \bem 1& x_2 +y_1 \\ x_1 + y_2 & 1 \eem 
\]
is an \irrL \mop, but
$$I - \Lambda \Lambda^*
= \bem 1- x_1 y_1 & 0\\
   0 & 1-x_2y_2
   \eem
   $$
   factors.\qed
   \een
\end{remark}

\subsection{The eig-generic conditions}\label{sssec:eig}

In this subsection we connect the various genericity assumptions on tuples in $M_d(\C)^g$ used in \cite{AHKM18} 
to clean, purely algebraic conditions of the 
corresponding \hmm  pencils, see Proposition \ref{p:eigony}.
We begin by recalling these assumptions precisely.

\begin{definition}[\protect{\cite[\S 7.1.2]{AHKM18}}]\rm
 \label{def:generic-weak}
  A tuple $A\in \matdg$  is \df{weakly eig-generic} if there exists an $\ell\le d+1$ and, for $1\le j\le \ell$,   positive integers $n_j$  and tuples $\alpha^j\in \matnjg$ such that
\ben[\rm (a)]
 \item \label{it:oneD}
   for each $1\le j\le \ell$, the eigenspace corresponding to the
   largest eigenvalue of $\Lambda_A(\alpha^j)^*\Lambda_A(\alpha^j)$ has dimension one and hence is spanned by a vector  $u^j = \sum_{a=1}^{n_j}  u^j_a\otimes e_a$; and 
 \item \label{it:span} the set  $\mathscr U =\{u^j_a: 1\le j\le \ell, \, 1\le a \le n_j\}$ contains a hyperbasis for  $\ker(A)^\perp =\rg(A^*)$. 
\een
 The tuple is \df{eig-generic} if it is weakly eig-generic and 
   $\ker(A)=\{0\}$ (equivalently, $\rg(A^*)=\C^d$).

  Finally, a tuple $A$ is \df{$*$-generic} (resp.~\df{weakly $*$-generic}) if there exists an $\ell\le d$ and tuples $\beta^j\in M_{n_j}(\C)^g$ such that
the kernels of
   $I-\Lambda_A(\beta^j)\Lambda_A(\beta^j)^*$ have dimension one and are spanned by vectors $\mu^j = \sum \mu^j_a \otimes e_a$ for which the set $\{\mu^j_a: 1\le j \le \ell, 1\le a\le n_j\}$ spans $\C^d$ (resp.~$\rg(A)=\ker(A^*)^\perp$). 
\end{definition}

\begin{remark}\label{r:equal}\rm
One can replace $n_j$ with $\sum_{j=1}^\ell n_j$ in Definition \ref{def:generic-weak}, so we can without loss of generality assume $n_1=\cdots=n_g$. 
\qedhere
\qed \end{remark}

Mixtures of these generic conditions were critical assumptions in the main theorems of
\cite{AHKM18}. The next proposition gives elegant and much more familiar replacements for them.

\begin{prop}
\label{p:eigony}
Let $A\in\mat{d}^g$.

\begin{enumerate}[{\rm(1)}]
	\item \label{i:eigony1} $A$ is eig-generic if and only if $Q_A$ is an atom and $\ker (A)=\{0\}$. 
	\item \label{i:eigony2} $A$ is $*$-generic and $\ker(A)=\{0\}$ if and only if 
${A^*}$ is \bmin.
	\item \label{i:eigony3} Let $\iota$ denote the inclusion of  $\rg(A^*)$
  into $\C^d.$   Then $A$ is weakly eig-generic if and only if $Q_{A\iota}$ is an atom and $\ker (A\iota)=\{0\}$. 
	\item \label{i:eigony4} Let $\iota$ denote the inclusion of $\rg(A)$ into $\C^d.$
         Then $A$ is weakly $*$-generic and $\ker(A)=\{0\}$ if and only if ${A^*\iota}$ is \bmin. 
\end{enumerate}
\end{prop}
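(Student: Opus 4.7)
The plan is to prove (3) and (4) directly via Propositions \ref{p:hairhyperspans} and \ref{p:hairspans}, and then deduce (1) and (2) as the special cases $\ker(A)=\{0\}$ in which the inclusions $\iota$ collapse to the identity. The strategy is to recast the eig-generic and $*$-generic conditions of Definition \ref{def:generic-weak} as hyperspanning and spanning statements about $\pi(\hair\cB_E)$ for a suitable tuple $E$ built from $A$.

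The first step is to set up a dictionary. After rescaling each witness (which does not change eigenvectors) we may assume $\|\Lambda_A(\alpha^j)\|=1$ and $\|\Lambda_A(\beta^j)\|=1$, so that $\alpha^j$ and $\beta^j$ are honest boundary points of $\cB_A$. The top eigenspace of $\Lambda_A(\alpha)^*\Lambda_A(\alpha)$ is then exactly $\ker\Qre_A(\alpha)$, and since $\rg(\Lambda_A(\alpha)^*)\subseteq\rg(A^*)\otimes\C^n$ it sits inside $\rg(A^*)\otimes\C^n$. Writing $\iota:\rg(A^*)\hookrightarrow\C^d$ for the inclusion, a direct computation gives $\iota^*\Lambda_A(\alpha)^*\Lambda_A(\alpha)\iota=\Lambda_{A\iota}(\alpha)^*\Lambda_{A\iota}(\alpha)$ as well as $\cB_{A\iota}=\cB_A$, so $u\mapsto(\iota^*\otimes I)u$ is a dimension-preserving bijection from the top eigenspace onto $\ker\Qre_{A\iota}(\alpha)$ that respects first-component projections. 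For the $*$-generic case, substituting $X=\beta^*$ yields $\Qre_{A^*}(\beta^*)=I-\Lambda_A(\beta)\Lambda_A(\beta)^*$; with $\iota:\rg(A)\hookrightarrow\C^d$ the analogous compression identifies $\ker(I-\Lambda_A(\beta)\Lambda_A(\beta)^*)$ with $(\iota\otimes I)\ker\Qre_{A^*\iota}(\beta^*)$ and preserves first-component projections.

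Through this dictionary, weak eig-genericity of $A$ translates to: $\pi(\hair\cB_{A\iota})$ contains a hyperbasis of $\rg(A^*)$; weak $*$-genericity translates to: $\pi(\hair\cB_{A^*\iota})$ spans $\rg(A)$. For (3), note that $\ker(A\iota)=\ker(A)\cap\rg(A^*)=\{0\}$ automatically, so Proposition \ref{p:hairhyperspans} applied with $E=A\iota$ delivers the claimed equivalence; item (1) follows by specializing to $\ker(A)=\{0\}$, when $\iota=I$ and $A\iota=A$. For (4), the computation $\ker((A^*\iota)^*)=\ker(\iota^*A)=\ker(A)$, which holds because $\rg(A_j)\subseteq\rg(A)$ forces $\iota^*A_jv=0$ iff $A_jv=0$, together with Proposition \ref{p:hairspans} applied with $E=A^*\iota$, yields (4); item (2) is then the specialization $\ker(A)=\{0\}$, where $A^*\iota=A^*$.

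The main technical obstacle is the bookkeeping in the dictionary: verifying that the compression by $\iota\otimes I$ genuinely restricts to a bijection on the top eigenspaces of the relevant operators, and that the first-component projections match correctly between $\C^d$ and $\rg(A^{(*)})$ under the identifications forced by $\iota$. Once this dictionary is cleanly established, Propositions \ref{p:hairspans} and \ref{p:hairhyperspans} do essentially all of the remaining work, and each of (1)--(4) reduces to invocation plus the trivial kernel computations above.
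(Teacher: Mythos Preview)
Your approach is correct in substance and uses the same two key ingredients as the paper: the compression dictionary via $\iota$ and Propositions~\ref{p:hairspans} and~\ref{p:hairhyperspans}. The paper organizes the argument the other way around---proving (1) and (2) directly (with $E=A$ and $E=A^*$, no compression needed) and then obtaining (3) and (4) by applying (1) and (2) to the compressed tuples $A\iota$ and $\iota^*A$---but the content is the same.

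There is one slip in your deduction of (2) from (4). In (4) the map $\iota$ is the inclusion of $\rg(A)$, so $A^*\iota=A^*$ requires $\rg(A)=\C^d$, i.e.\ $\ker(A^*)=\{0\}$, not $\ker(A)=\{0\}$. The fix is short: observe that each side of (2) already forces $\ker(A^*)=\{0\}$ (if $A$ is $*$-generic then the spanning vectors $\mu^j_a$ lie in $\rg(A)$ and span $\C^d$; if $A^*$ is \bmin then $\ker(A^*)=\{0\}$ by definition), after which (4) specializes to (2). Alternatively, and more in the spirit of your own plan, just prove (2) directly by the same dictionary without $\iota$: $A$ is $*$-generic iff $\pi(\hair\cB_{A^*})$ spans $\C^d$, and then Proposition~\ref{p:hairspans} with $E=A^*$ gives the equivalence since $\ker((A^*)^*)=\ker(A)$.
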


\begin{proof}
 It is immediate from the definitions that if $\pi(\hair\cB_A)$ contains
 a hyperbasis,   then $A$ is eig-generic.  On the other hand,
 if $(\alpha,u) \in \widehat{\partial^1 \cB_E}.$
 then $u$ is an eigenvector of  $\Lambda_A(\alpha)^*\Lambda_A(\alpha)$
 corresponding to its largest eigenvalue $1$. Writing 
 $u=\sum_{a=1}^n u_a\otimes e_a\ne 0,$ each $u_a\in \pi(\hair\cB_E)$ because 
 if $U$ is a unitary matrix, then $(U \alpha U^*,Uu)\in \widehat{\partial^1 \cB_E}.$
 Hence $\pi(\hair\cB_A)$ contains a hyperbasis if and only if
 $A$ is eig-generic and therefore item \eqref{i:eigony1}   Follows from Proposition \ref{p:hairhyperspans} and Remark \ref{r:equal}.

 A similar argument to that above shows $\pi(\hair \cB_{A^*})$ spans
 if and only if $A$ is $*$-generic. Thus item \eqref{i:eigony2} follows from the 
 Proposition \ref{p:hairspans} and Remark \ref{r:equal}.

Item \eqref{i:eigony3} follows from \eqref{i:eigony1} since
 $A\iota$ is eig-generic and $\ker(A\iota)=\{0\}.$

Item \eqref{i:eigony4} follows from \eqref{i:eigony2} since $\iota^*A$
 is weakly $*$-generic and $\ker(\iota^*A)=\ker(A).$
\end{proof}

\subsection{Proof of Theorem \ref{thm:sdmain}}
\label{sec:proof1}

We use Proposition \ref{p:eigony}.
In the terminology of \cite{AHKM18},
assumptions \eqref{it:sdmaina}
and \eqref{it:sdmainb}
imply that $A$ is eig-generic and $*$-generic,
and $B$ is eig-generic, since the \bmin hypothesis on $A^*$ implies $\ker(A)=\{0\}.$ 
Theorem \ref{thm:sdmain} thus follows
from \cite[Corollary 7.11]{AHKM18} once it is verified
that the assumptions imply $\cD_B$ is bounded,
$p(\partial \cD_A) \subset \partial \cD_B$ and
$q(\partial \cD_B)\subset \partial \cD_A$.
For instance, if $X\in \partial \cD_A$, but $p(X)\in \interior(\cD_B)$,
then there is a $Z\notin \cD_A$ such that $p(Z)\in \cD_B$. But then,
$Z=q(p(Z))\in \cD_A$, a contradiction.
\qed

\section{Bianalytic maps between spectraballs and free spectrahedra}
\label{sec:maindeal}
In this section we prove the rest of our 
  main results, Proposition \ref{p:HKVpBergman},
and then Theorem \ref{t:pencil-ball-alt} and its Corollary \ref{t:B2B}.

\subsection{The proof of Proposition \ref{p:HKVpBergman}}
\label{sec:aux}
 Throughout this subsection, we 
 fix a tuple $E\in M_{d\times e}(\C)^g,$ a positive integer $M$
 and an $F\in \pxx^{1\times e}$  of degree degree at  most  $M.$
  Write  $F=\begin{pmatrix} F^1 & \cdots & F^e\end{pmatrix}$ and 
 \[ 
 F^s = \sum_{|w|\le M} F^s_w w,
\] 
 where {$|w|$} denotes the \df{length of the word} $w$
 and $F^s_w\in\C.$

  Let $S$ denote the tuple of shifts on the truncated Fock space
 $\sF_M$ with orthonormal basis the  words of length at most $M$ 
 in the freely noncommuting variables $\{x_1,\dots,x_g\}.$
 When viewing a word $w$ as an element of the finite dimensional
 Hilbert space $\sF_M$ we will write $\vecd{w}.$
 Thus $S_\ell \vecd{w} = \vecd{x_\ell w}$ if $|w|<M$ and $S_\ell \vecd{w} =0$ if $|w|=M.$
  Let $P$ denote the projection of $\sF_M$ onto
 the subspace $\sF_{M-1}$ and note that $S_k^*S_\ell=P$ if $k=\ell$
 and $S_k^*S_\ell=0$ if $k\ne \ell.$

 Given a matrix $\beta =(\beta_{j,k})_{j,k=1}^g \in M_g(M_r(\C))$ and
  words $u,w$ of the same  length $N$,
\[
 u= x_{j_1} x_{j_2}\cdots x_{j_N}, \ \ 
 w= x_{k_1} x_{k_2} \cdots x_{k_N},
\] 
 let
\[
 \wbeta_{u,w}  = \beta_{k_1,j_1} \beta_{k_2,j_2}\cdots \beta_{k_N,j_N}.
\]
 In particular, $\beta_{j,k} =\wbeta_{x_k,x_j}$ 
\begin{equation}
 \label{e:wbetaproducts}
 \wbeta_{u,w}\wbeta_{x_j,x_k} 
  = \beta_{k_1,j_1} \beta_{k_2,j_2}\cdots \beta_{k_N,j_N} \beta_{k,j}
  = \wbeta_{ux_j,wx_k}.
\end{equation}
 Let 
\[
 (\beta\cdotb S)_j = \sum_{k=1}^g  \beta_{j,k}\otimes  S_k
\]
 and $\beta\cdotb S = ( (\beta\cdotb S)_1,\dots, (\beta\cdotb S)_g).$

\begin{lemma}
 \label{l:combinatorics}
    Given $1\le N\le M$ and a  word $w$ of length $N$, 
\[
 (\beta\cdotb S)^w =  \sum_{|u|=N}  \wbeta_{u,w} \otimes S^u.
\]
\end{lemma}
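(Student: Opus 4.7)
The plan is to prove the identity by a direct expansion of the product, which amounts to little more than bookkeeping in the definitions. Writing $w = x_{k_1} x_{k_2} \cdots x_{k_N}$, we have
\[
(\beta \cdotb S)^w = \prod_{i=1}^N (\beta \cdotb S)_{k_i} = \prod_{i=1}^N \Big(\sum_{j_i=1}^g \beta_{k_i, j_i} \otimes S_{j_i}\Big).
\]
Distributing the product over the sums and using $(a\otimes A)(b\otimes B) = ab\otimes AB$ in $M_r(\C)\otimes \mathcal{L}(\sF_M)$, one obtains
\[
(\beta\cdotb S)^w = \sum_{j_1,\dots,j_N=1}^g \beta_{k_1,j_1}\beta_{k_2,j_2}\cdots \beta_{k_N,j_N}\otimes S_{j_1}S_{j_2}\cdots S_{j_N}.
\]
Re-indexing the outer sum by the word $u = x_{j_1}\cdots x_{j_N}$ of length $N$ and recognizing $S_{j_1}\cdots S_{j_N} = S^u$ and $\beta_{k_1,j_1}\cdots \beta_{k_N,j_N} = \wbeta_{u,w}$ by definition gives the claimed formula.

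Alternatively, and perhaps more cleanly, the identity can be established by induction on $N$. The case $N=1$ is immediate from the identification $\beta_{j,k}=\wbeta_{x_k,x_j}$. For the inductive step, write $w = w' x_k$ where $|w'|=N-1$, use the inductive hypothesis on $(\beta\cdotb S)^{w'}$, and then multiply on the right by $(\beta\cdotb S)_k = \sum_{j} \beta_{k,j}\otimes S_j$; the product $\wbeta_{u',w'}\,\beta_{k,j} = \wbeta_{u'x_j,\,w'x_k}$ supplied by \eqref{e:wbetaproducts} is exactly what is needed to reassemble the sum as $\sum_{|u|=N}\wbeta_{u,w}\otimes S^u$.

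There is no substantive obstacle here: the statement is essentially a tautology encoded in the notation $\wbeta_{u,w}$, and the only thing to check is that the combinatorial bookkeeping of indices is done correctly (in particular, the order in which the $\beta_{k_i,j_i}$ are multiplied matches the convention in the definition of $\wbeta_{u,w}$, which it does since both are read left-to-right along the letters of $w$ and $u$).
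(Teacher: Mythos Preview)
Your proposal is correct. The inductive argument you sketch as the alternative is exactly the paper's proof (writing $w=vx_t$ with $|v|=N$ and using \eqref{e:wbetaproducts}), while your direct expansion is an equally valid and arguably cleaner variant that the paper does not spell out.
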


\begin{proof}
We induct on $N.$ For $N=1$ and $w=x_t$,
\[
(\beta\cdotb S)^w=  \sum_{k=1}^g \beta_{t,k} \otimes S_k =\sum_{k} \wbeta_{x_k,x_t} S_k
  = \sum_{|u|=1} \wbeta_{u,x_t} S^u
\]
 Now suppose the result holds for $N.$ Let $v$ be a word of length $N$
 and consider the word $w=vx_t$ of length $N+1.$ Using the induction
 hypothesis and equation \eqref{e:wbetaproducts},
\[
 \begin{split}
 (\beta \cdotb S)^{w} & =  (\beta\cdotb S)^{v} (\beta\cdotb S)^{x_t}
    = [\sum_{|u|=N} \wbeta_{u,v} \otimes S^u]\, [\sum_{k} \beta_{t,k}\otimes S_k]\\
    & =  \sum_{|u|=N}\sum_{k=1}^g  \wbeta_{u,v}\beta_{t,k} \otimes S^u S_k
    = \sum_{|u|=N} \sum_{k=1}^g  \wbeta_{ux_k,vx_t} \otimes S^{ux_k} \\
    & =  \sum_{|z|=N+1} \wbeta_{z,w} \otimes S^{z}.  \qedhere
\end{split}
\]
\end{proof}

 Given $N$, let \df{$\sG_N$} denote the subspace of $\sF_M$ spanned by
 words of length $N.$  Thus the words of length $N$ form
 an orthonormal basis for $\sG_N.$ 
 Given words
 $u,w\in \sG_N,$  let $\vecd{u} \, \vecd{w}^*$ 
 denote the linear mapping on $\sG_N$
 determined by $\vecd{u} \, \vecd{w}^* \vecd{v}=\langle \vecd{v},\vecd{w}\rangle \vecd{u},$
  for words $v\in \sG_N.$
Let 
\[
 B(\beta,N) = \sum_{|u|=N=|w|} \wbeta_{u,w} \otimes \fE= \begin{pmatrix} \wbeta_{u,w} \end{pmatrix}_{|u|=N=|w|}
  \in M_r(\C)\otimes  M_{g^N}(C),
\]
  where the second equality is understood in the sense of unitary equivalence.
 In particular, $B(\beta,1) = \begin{pmatrix} \beta_{k,j} \end{pmatrix}_{j,k=1}^g.$

\begin{lemma}
 \label{l:stillinvertible}
   For each positive integer $N$  the set of $\beta\in M_g(M_r(\C))$ such that
   $B(\beta,N)$ is invertible is open and dense.
\end{lemma}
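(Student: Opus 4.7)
The plan is to show that the invertibility of $B(\beta,N)$ is governed by a single polynomial inequality in the entries of $\beta$, and then exhibit one $\beta$ at which that polynomial is nonzero.

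First, I would observe that the entries of $B(\beta,N)$ are (by the very definition of $\wbeta_{u,w}$) polynomial functions of the $g^2 r^2$ scalar entries of $\beta\in M_g(M_r(\C))$. Hence $\det B(\beta,N)$ is itself a polynomial in those entries. The set of $\beta$ for which $B(\beta,N)$ is invertible is therefore the complement of the zero locus of a polynomial, which is automatically Zariski open; in particular, it is open in the Euclidean topology. Moreover, as soon as this polynomial is not identically zero, its vanishing locus is a proper algebraic subvariety of $M_g(M_r(\C))$, hence is nowhere dense, which gives density.

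The one thing left to verify, then, is that $\det B(\beta,N)$ is not the zero polynomial. For this it suffices to exhibit a single $\beta_0$ at which $B(\beta_0,N)$ is invertible. The natural choice is $\beta_0$ defined by
\[
(\beta_0)_{j,k} = \delta_{j,k}\, I_r,
\]
i.e.\ $\beta_0$ is the identity element of $M_g(M_r(\C))\cong M_r(\C)\otimes M_g(\C)$. For words $u=x_{j_1}\cdots x_{j_N}$ and $w=x_{k_1}\cdots x_{k_N}$ of length $N$, the definition of $\wbeta$ gives
\[
\widehat{(\beta_0)}_{u,w}
=(\beta_0)_{k_1,j_1}(\beta_0)_{k_2,j_2}\cdots(\beta_0)_{k_N,j_N}
=\delta_{k_1,j_1}\delta_{k_2,j_2}\cdots\delta_{k_N,j_N}\,I_r
=\delta_{u,w}\,I_r.
\]
Consequently $B(\beta_0,N)=I_{rg^N}$, which is certainly invertible.

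Combining these two observations, $\det B(\beta,N)$ is a nonzero polynomial in the entries of $\beta$, so its non-vanishing set is Zariski open and dense in $M_g(M_r(\C))$. I do not anticipate any serious obstacle: the only subtlety is making sure the combinatorial definition of $\wbeta_{u,w}$ is correctly applied at $\beta_0$, which is straightforward.
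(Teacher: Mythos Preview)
Your proof is correct and follows essentially the same approach as the paper: observe that $\det B(\beta,N)$ is a polynomial in the entries of $\beta$, and verify it is not identically zero by evaluating at $\beta_0$ with $(\beta_0)_{j,k}=\delta_{j,k}I_r$, where $B(\beta_0,N)$ is the identity. The paper's proof is slightly terser but uses the same test point and the same reasoning.
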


\begin{proof}
  For the second statement, observe that $B(I,N)$ is the identity 
  matrix since, with $\beta_{j,k}=\updelta_{j,k}I_r,$ we have
  $\wbeta_{u,w} =\delta_{u,w} I_r.$  Hence the mapping 
  $\psi:M_g(M_r(\C))\to \C$ defined by $\psi(\beta)=\det B(\beta,N)$
  is a polynomial in the entries of $\beta$ that is not identically zero.
  Thus $\psi$  is nonzero on an open dense set and the result follows.
\end{proof}

 For notational purposes, let $1$ denote the emptyword $\vecd{\emptyset} \in \sF_M.$
 Let $\{\base_1,\base_2,\dots,\base_e\}$  denote 
  the standard orthonormal basis for $\C^e.$ 

\begin{lemma}
 \label{l:onea}
 Suppose $\beta\in M_g(M_r(\C))$ and $\gamma= \sum_{s=1}^e \base_s \otimes \gamma_s\in \C^e\otimes \C^r.$
 If  
\[
  \sum_{s=1}^e  F^s(\beta\cdotb S)[\gamma_s \otimes 1] =0,
\]
 then, for $1\le N\le M$ and each word $u$ of length $N,$
\[
 \sum_{|w|=N} \wbeta_{u,w} [\sum_{s=1}^e  F^s_w \gamma_s] = 0.
\]
 Moreover, if  $B(\beta,N)$ is invertible, then
\[
 \sum_{s=1}^e  F^s_w \gamma_s =0
\]
 for each word $|w|=N.$
\end{lemma}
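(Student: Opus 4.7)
The plan is to expand $F^s(\beta\cdotb S)$ via Lemma \ref{l:combinatorics}, apply the resulting expression to $\gamma_s\otimes 1$, and then read off coefficients against the orthonormal word basis of $\sF_M$. Specifically, writing $F^s = F^s_\emptyset + \sum_{N=1}^M \sum_{|w|=N} F^s_w\, w$ and invoking Lemma \ref{l:combinatorics} on each homogeneous piece, the hypothesis $\sum_s F^s(\beta\cdotb S)[\gamma_s\otimes 1]=0$ becomes, after using $S^u\cdot 1 = \vecd u$ and commuting the scalar $F^s_w$ past the matrix $\wbeta_{u,w}\in M_r(\C)$,
\[
0 = \Big(\sum_s F^s_\emptyset \gamma_s\Big)\otimes 1 + \sum_{N=1}^M \sum_{|u|=N} \Big[\sum_{|w|=N}\wbeta_{u,w}\Big(\sum_s F^s_w\gamma_s\Big)\Big]\otimes \vecd u.
\]

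Since $\{\vecd u : |u|\le M\}$ is an orthonormal basis of $\sF_M$, the $\C^r$-valued coefficient of each $\vecd u$ must vanish independently. Reading this off at level $|u|=N\ge 1$ yields exactly the claimed identity (and at level $N=0$ the bonus $\sum_s F^s_\emptyset \gamma_s = 0$).

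For the moreover statement, I would fix $N\ge 1$, set $v_w := \sum_s F^s_w\gamma_s\in \C^r$ for each $|w|=N$, and repackage the identities just derived as the single block-matrix equation $B(\beta,N)\vec v = 0$, where $\vec v$ collects the $v_w$'s in the natural order. Invertibility of $B(\beta,N)$ then forces $v_w = 0$ for every $|w|=N$.

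There is no real obstacle beyond careful bookkeeping of tensor factors: one must keep the $M_r(\C)$-action on the $\gamma_s$'s cleanly separated from the extraction of $\sF_M$-coefficients, and exploit the scalarity of $F^s_w$ to move it past $\wbeta_{u,w}$ so that the sums can be regrouped in the desired form.
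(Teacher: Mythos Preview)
Your proposal is correct and follows essentially the same approach as the paper: expand via Lemma~\ref{l:combinatorics}, apply to $\gamma_s\otimes 1$ using $S^u\cdot 1=\vecd u$, read off coefficients against the word basis of $\sF_M$, and for the moreover part repackage the resulting identities as $B(\beta,N)$ applied to the vector $\sum_{|w|=N}\big(\sum_s F^s_w\gamma_s\big)\otimes\vecd w$ equals zero. The paper's proof is the same computation, with the same bookkeeping.
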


\begin{proof}
 Since $F^s_w\in \C,$ by Lemma \ref{l:combinatorics},
\[
 \sum_{N=0}^M \sum_{|w|=N} F^s_w (\beta\cdotb S)^w
  =\sum_{N=0}^M \sum_{|u|=N} \left [\sum_{|w|=N}  F^s_w \wbeta_{u,w}  \right] \otimes S^u.
\]
 Thus, 
\[
0=\sum_{s=1}^e F^s(\beta\cdotb S)[\gamma_s\otimes 1]
   =\sum_{N=0}^M  \sum_{|u|=N} \left ( \sum_{|w|=N} \wbeta_{u,w} 
    [\sum_{s=1}^e F^s_w \gamma_s]  \right ) \otimes \vecd{u}
\]
 and the first part of the result follows.

 To prove the second part,  let
\[
 y = \sum_{|v|=N} y_v\otimes \vecd{v} \in \C^r\otimes \sG_N,
\]
where $y_v=\sum_{s=1}^e F^s_v \gamma_s \in \C^r.$
 Thus 
\[
\begin{split}
    B(\beta,N) y & = \sum_{|u|=N=|w|} \wbeta_{u,w}\otimes \fE  
      \sum_{|v|=N} y_v \otimes v \\
   & = \sum_{|u|=N=|w|} \wbeta_{u,w} y_w \otimes \vecd{u}  =
     \sum_{|u|=N} [ \sum_{|w|=N} \wbeta_{u,w} y_w ]\otimes  \vecd{u} = 0.
\end{split}
\]
 Hence  if  $B(\beta,N)$ is invertible, then
  $y=0$ and therefore $\sum_{s=1}^e F^s_w  \gamma_s=0$
 for each $|w|=N.$
\end{proof}

  We continue to let $\{\base_1,\base_2,\dots,\base_e\}$ 
 denote the standard basis for $\C^e.$ Let 
   $\{\basr_1,\dots,\basr_r\}$ denote 
  the standard orthonormal basis for $\C^r.$

\begin{prop}
 \label{p:ifspan}
 Fix $1\le N\le M.$
 If there exist a positive integer $r$ and
   $(\beta^a, \gamma^a)\in M_g(M_r(\C))\times [\C^e\otimes \C^r]$ 
     for $1\le a\le e$ such that, 
 \begin{enumerate}[\rm (a)]
  \item writing 
\[ 
  \gamma^a =\sum_{t=1}^r \delta^a_t\otimes \basr_t 
\]
      the vectors $\{\delta^a_1: 1\le a\le e\}$ span $\C^e$; 
   \item \label{i:ifspan2} 
         $B(\beta^a,N)$ is invertible for each $1\le a\le e;$ 
  \item \label{i:ifspan3} $F(\beta^a\cdotb S)[\gamma^a\otimes 1]=0$ for each $1\le a\le e$, 
 \end{enumerate} 
 then $F^s_w=0$ for each $1\le s \le e$ and $|w|=N.$
\end{prop}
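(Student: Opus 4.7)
The plan is to reduce Proposition \ref{p:ifspan} to Lemma \ref{l:onea} applied separately at each index $a$, and then to combine the resulting $e$ identities using the spanning hypothesis (a).

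For each $a = 1, \ldots, e$, hypotheses (b) and (c) put us in precisely the situation covered by Lemma \ref{l:onea}: $B(\beta^a, N)$ is invertible, and $F(\beta^a \cdotb S)[\gamma^a \otimes 1] = 0$. Decomposing $\gamma^a = \sum_{s=1}^e \base_s \otimes \gamma^a_s$ with $\gamma^a_s \in \C^r$, the lemma yields
\[
\sum_{s=1}^e F^s_w \, \gamma^a_s \;=\; 0 \qquad \text{in } \C^r,
\]
for every word $w$ of length $N$ and every $a = 1, \ldots, e$.

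Next, I would reconcile this decomposition of $\gamma^a$ with the one appearing in (a): expanding $\gamma^a = \sum_{s,t} c^a_{s,t}\, \base_s \otimes \basr_t$, we have $\gamma^a_s = \sum_{t=1}^r c^a_{s,t}\, \basr_t$, while $\delta^a_t = \sum_{s=1}^e c^a_{s,t}\, \base_s$. Pairing the displayed identity above with $\basr_1$ extracts the $t = 1$ coordinate and produces
\[
\sum_{s=1}^e F^s_w \, \langle \delta^a_1, \base_s \rangle \;=\; 0
\]
for each $a = 1, \ldots, e$ and each $|w| = N$. Viewing $(F^1_w, \ldots, F^e_w)$ as an unknown vector in $\C^e$, this is an $e \times e$ homogeneous linear system whose row vectors are the coordinate vectors of $\{\delta^a_1\}_{a=1}^e$. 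By (a) these vectors span $\C^e$, hence form a basis, so the coefficient matrix is invertible and the only solution is $F^s_w = 0$ for all $s$ and all $|w| = N$.

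There is no substantive obstacle here: the real content has already been absorbed into Lemma \ref{l:onea} and its combinatorial predecessor Lemma \ref{l:combinatorics}. The only mild care needed is the tensor-index bookkeeping to pass between the decomposition of $\gamma^a \in \C^e \otimes \C^r$ used in Lemma \ref{l:onea} (splitting along the $\C^e$ factor) and the one given in hypothesis (a) (splitting along the $\C^r$ factor). Note that only the $\basr_1$ coordinate is needed, because (a) already furnishes spanning vectors from the $\delta^a_1$'s alone; the remaining coordinates $\delta^a_t$ for $t \ge 2$ play no role in the argument.
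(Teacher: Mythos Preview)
The proposal is correct and follows essentially the same approach as the paper: apply Lemma \ref{l:onea} at each index $a$ to obtain $\sum_s F^s_w \gamma^a_s = 0$, then extract the $\basr_1$-coordinate and use the spanning hypothesis on $\{\delta^a_1\}$ to conclude $F^s_w = 0$. The paper phrases the coordinate bookkeeping via the identity $\basr_1^*\gamma^a_s = \base_s^*\delta^a_1$ and the row vector $F_w$, but this is exactly your computation in slightly different notation.
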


\begin{proof}
 Note that 
\[
0 = F(\beta^a\cdotb S)[\gamma^a \otimes 1] = 
  \sum_{s=1}^e F^s(\beta^a \cdotb S) [\gamma^a_s \otimes 1].
\]
 Thus items \eqref{i:ifspan2} and \eqref{i:ifspan3} validate
 the hypotheses of  Lemma \ref{l:onea}, and hence  $\sum_s F^s_w \gamma^a_s =0$
 for each $|w|=N$ and  $1\le a\le e.$  Writing
  $\gamma^a 
         = \sum_{s=1}^e \base_s \otimes \gamma^a_s$,
 it follows that 
\[
 \sum_{s=1}^e  [\basr_1^*\gamma^a_s] \base_s 
   =  (I\otimes \basr_1^*) \sum_{s=1}^e  \base_s\otimes \gamma^a_s 
   = \delta^a_1 = \sum_{s=1}^e [\base_s^* \delta^a_1]\base_s.
\]
 Therefore $\basr_1^*\gamma^a_s = \base_s^* \delta^a_1$ and consequently,
 for $|w|=N,$
\[
 0 =     \sum_{s=1}^e  F^s_w  [\basr_1^* \gamma^a_s] 
     = \sum_{s=1}^e F^s_w [\base_s^*\delta^a_1] 
    = F_w \delta^a_1,
\]
 where $F_w = \begin{pmatrix} F^1_w & \dots & F^e_w\end{pmatrix}\in \C^{1\times e}.$
 Since, by hypothesis, $\{\delta^a_1:1\le a\le e\}$ spans $\C^e$ it follows
 that $F_w=0$ whenever $|w|=N.$  Thus $F^s_w=0$ for $1\le s\le e$
 and $|w|=N.$
\end{proof}

 Given $\beta=(\beta_{j,k}) \in M_g(M_r(\C))$, let
\[
 (E\cdotb \beta)_k = \sum_{j=1}^g E_j\otimes \beta_{j,k}
\]

\begin{lemma}
 \label{l:schifty}
   For  $\beta \in M_g(M_r(\C))$,
\[
\begin{split}
 \Lambda_E(\beta\cdotb S) & =   %
       \sum_k  (E\cdotb \beta)_k  \otimes S_k\\
 \Qre_E(\beta\cdotb S) 
     & =  [I -\sum_k  (E\cdotb \beta)_k^* (E\cdotb \beta)_k ]
      \otimes 
       P \, +  \, I\otimes (I-P), 
\end{split}
\]
 where $P$ is the projection of $\sF_M$ onto $\sF_{M-1}.$
\end{lemma}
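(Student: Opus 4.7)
The plan is to prove both identities by direct computation, unpacking the definitions of $\Lambda_E$, the convexotonic-style action $\cdotb$, and the homogeneous pencil, and then using the key relation $S_k^* S_\ell = \delta_{k,\ell} P$ recorded just above in the setup for the shift tuple on the truncated Fock space.

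For the first identity, I would simply compute
\[
\Lambda_E(\beta\cdotb S) = \sum_{j=1}^g E_j\otimes (\beta\cdotb S)_j
 = \sum_{j,k} E_j \otimes \beta_{j,k}\otimes S_k
 = \sum_{k=1}^g \Bigl(\sum_{j=1}^g E_j\otimes \beta_{j,k}\Bigr)\otimes S_k,
\]
and recognize the inner sum as $(E\cdotb\beta)_k$. This is purely a reshuffling of tensor factors, and requires no input beyond the definitions of $\Lambda_E$, $\beta\cdotb S$, and $E\cdotb\beta$.

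For the second identity, the idea is to apply the relation $\Qre_E(X) = I - \Lambda_E(X)^*\Lambda_E(X)$ (which follows from $\Qre_E(X) = Q_E(X,X^*)$ and $\Lambda_{E^*}(X^*)=\Lambda_E(X)^*$) to $X = \beta\cdotb S$ and use the first identity. This yields
\[
\Lambda_E(\beta\cdotb S)^*\Lambda_E(\beta\cdotb S)
 = \sum_{k,\ell} (E\cdotb\beta)_k^*(E\cdotb\beta)_\ell \otimes S_k^* S_\ell.
\]
Now I substitute $S_k^*S_\ell = \delta_{k,\ell} P$, collapsing the double sum to $\sum_k (E\cdotb\beta)_k^*(E\cdotb\beta)_k \otimes P$, so that
\[
\Qre_E(\beta\cdotb S) = I\otimes I - \Bigl(\sum_k (E\cdotb\beta)_k^*(E\cdotb\beta)_k\Bigr)\otimes P.
\]
The final step is a trivial rewriting: split $I\otimes I = I\otimes P + I\otimes(I-P)$ to obtain the stated form. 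There is no real obstacle here; the whole argument is mechanical once the product-of-shifts relation $S_k^* S_\ell = \delta_{k,\ell}P$ is in hand, which is exactly the point recorded in the setup just before the lemma. The reason for writing the answer in the slightly cumbersome form $[I - \sum_k (E\cdotb\beta)_k^*(E\cdotb\beta)_k]\otimes P + I\otimes (I-P)$ is presumably to separate the "top level" $\sF_M\ominus\sF_{M-1}$ (on which $\Qre_E(\beta\cdotb S)$ acts as the identity) from the "lower level" $\sF_{M-1}$ (on which the constraint $\Qre_E \succeq 0$ becomes $I - \sum_k (E\cdotb\beta)_k^*(E\cdotb\beta)_k \succeq 0$), a splitting that will no doubt be exploited in the subsequent proof of Proposition~\ref{p:HKVpBergman}.
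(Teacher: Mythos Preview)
Your proof is correct and follows essentially the same approach as the paper: both compute $\Lambda_E(\beta\cdotb S)$ by reshuffling the tensor factors, then use $S_k^*S_\ell = \delta_{k,\ell}P$ to collapse the cross terms in $\Lambda_E(\beta\cdotb S)^*\Lambda_E(\beta\cdotb S)$. The paper's proof is slightly terser, omitting the explicit splitting $I\otimes I = I\otimes P + I\otimes(I-P)$ that you spell out.
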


\begin{proof}
 Compute, %
\[
   \Lambda_E(\beta\cdot S)
  =\sum_{j=1}^g  E_j\otimes (\sum_{k=1}^g  \beta_{j,k}\otimes S_k) 
  =  \sum_{k=1}^g  [\sum_{j=1}^g  E_j \otimes \beta_{j,k}]\otimes S_k
  = \sum_{k=1}^g  (E\cdotb \beta)_k \otimes S_k,
\]
 and thus
\[
 \Lambda_E(\beta\cdot S)^* \Lambda_E(\beta\cdot S)
      = [\sum_{k=1}^g   (E\cdotb \beta)_k^* (E\cdotb \beta)_k] \otimes P
\]
and the result follows. 
\end{proof}

\subsubsection{The hair spanning condition}
 A subset $\{(\alpha^a,\gamma^a):1\le a\le e\}\subset M_r(\C)^g \times
 [\C^e\otimes \C^r]$ is a \df{boundary spanning set}  for $\cB_E$ if 
  each $(\alpha^a,\gamma^a)\in \widehat{\partial \cB_E}$ and,
 writing $\gamma^a=\sum_{t=1}^r \delta^a_t \otimes \basr_t$, 
 the set $\{\delta^a_1: 1\le a\le e\}$ spans $\C^e.$ This
 set is a \df{boundary hair spanning set} for $\cB_E$ if moreover  
 $(\alpha^a,\gamma^a) \in \widehat{\partial^1 \cB_E}$ for each $a.$
 By Proposition \ref{p:hairspans}, if $E$ is \bmin, then 
 there exists a boundary hair  spanning set for $\cB_E.$

\begin{prop}
 \label{p:hairspantospan}
 Fix $1\le N\le M.$
 If $E\in M_{d\times e}(\C)^g $ is \bmin, then there exists
 a positive integer $r$ and a subset $\{(\beta^a,\gamma^a):1\le a\le e\}$
  of $M_g(M_r(\C))\otimes [\C^e\otimes \C^r]$ 
 such that $B(\beta^a,N)$ is invertible for each $1\le a\le e$ and 
  $\{(\beta^a\cdotb S,\gamma^a\otimes 1): 1\le a\le e\}$ is 
  a boundary spanning set for $\cB_E.$  
\end{prop}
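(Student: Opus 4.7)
The plan is to start by invoking Proposition \ref{p:hairspans}: since $E$ is \bmin, there exist a positive integer $r_0$ and pairs $(\alpha^a,\gamma^a)\in \widehat{\partial^1 \cB_E}(r_0)$ for $1\le a\le e$ with $\{\delta^a_1:1\le a\le e\}$ spanning $\C^e$, where $\gamma^a=\sum_t \delta^a_t\otimes \basr_t$. For each $a$, I take as the base candidate $\beta^{a,0}\in M_g(M_{r_0}(\C))$ defined by $(\beta^{a,0})_{j,k}=\delta_{k,1}\alpha^a_j$, so that $(E\cdotb \beta^{a,0})_k=\delta_{k,1}\Lambda_E(\alpha^a)$. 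Lemma \ref{l:schifty} then yields $\Qre_E(\beta^{a,0}\cdotb S)=\Qre_E(\alpha^a)\otimes P+I\otimes(I-P)$, and since $P\vecd{\emptyset}=\vecd{\emptyset}$ and $\Qre_E(\alpha^a)\gamma^a=0$, we get $\Qre_E(\beta^{a,0}\cdotb S)(\gamma^a\otimes \vecd{\emptyset})=0$. Thus $(\beta^{a,0}\cdotb S,\gamma^a\otimes 1)\in \widehat{\partial \cB_E}$, and in the ordered basis $\{\basr_t\otimes \vecd{w}\}$ starting with $\basr_1\otimes \vecd{\emptyset}$, the first component of $\gamma^a\otimes \vecd{\emptyset}$ is $\delta^a_1$, so the boundary spanning condition is automatic. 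The problem is that $B(\beta^{a,0},N)$ is singular since only the first column of the block matrix $\beta^{a,0}$ is nonzero.

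To fix this, I pass to perturbations $\beta^a=\beta^{a,0}+\eta$, with $\eta$ in the complex linear subspace $\sU^a\subset M_g(M_r(\C))$ cut out by $\eta_{j,1}=0$ for all $j$ together with $(E\cdotb \eta)_k\gamma^a=0$ for $k\ne 1$ (padding $\beta^{a,0}$ and $\gamma^a$ with zeros out to some $r\ge r_0$ if needed). Cross terms vanish identically because $(E\cdotb \beta^{a,0})_k$ is nonzero only for $k=1$ while $(E\cdotb \eta)_1=0$, giving
\[
\sum_k (E\cdotb \beta^a)_k^*(E\cdotb \beta^a)_k=\Lambda_E(\alpha^a)^*\Lambda_E(\alpha^a)+\sum_{k\ne 1}(E\cdotb \eta)_k^*(E\cdotb \eta)_k.
\]
Applied to $\gamma^a$, the second sum vanishes for every $\eta\in \sU^a$, so the kernel condition $\Qre_E(\beta^a\cdotb S)(\gamma^a\otimes \vecd{\emptyset})=0$ is preserved exactly; moreover, since the added term annihilates $\gamma^a$ from both sides and the spectrum of $\Lambda_E(\alpha^a)^*\Lambda_E(\alpha^a)$ is strictly below $1$ off $\spann(\gamma^a)$, the positivity $\sum_k (E\cdotb \beta^a)_k^*(E\cdotb \beta^a)_k\preceq I$ persists for $\eta$ of small norm, so that $(\beta^a\cdotb S,\gamma^a\otimes 1)\in \widehat{\partial \cB_E}$.

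The final task is to find a small $\eta\in \sU^a$ with $B(\beta^{a,0}+\eta,N)$ invertible. By Lemma \ref{l:stillinvertible} the polynomial $\det B(\cdot,N)$ is not identically zero on $M_g(M_r(\C))$, so its vanishing set is a proper Zariski hypersurface. I would argue, by enlarging $r$ sufficiently, that the restriction of $\det B(\beta^{a,0}+\cdot,N)$ to $\sU^a$ is not identically zero: a dimension count gives $\dim \sU^a\ge g(g-1)r^2-(g-1)dr$, which grows quadratically in $r$ once $gr>d$, while the codimension of the singular hypersurface is $1$. Exhibiting a suitable direction $\eta\in \sU^a$ (chosen to be supported in the padding block in a way that enriches $B(\beta^a,N)$ nondegenerately) then produces a nonzero polynomial in a scalar $t$, and for all but finitely many small $t$ one obtains the desired $\beta^a=\beta^{a,0}+t\eta$. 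The main obstacle is precisely this last step: ruling out the possibility that $\sU^a$ lies inside the singular hypersurface of $\det B(\cdot,N)$, for which the freedom to enlarge $r$ via the zero-padding is essential.
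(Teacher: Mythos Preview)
Your setup matches the paper's exactly: you both start from a boundary hair spanning set $\{(\alpha^a,\zeta^a)\}$ and form the block matrix $\beta^{a,0}$ with $(\beta^{a,0})_{j,k}=\delta_{k,1}\alpha^a_j$, and you both observe via Lemma~\ref{l:schifty} that $(\beta^{a,0}\cdotb S,\zeta^a\otimes 1)\in\widehat{\partial\cB_E}$ while $B(\beta^{a,0},N)$ is singular. The divergence is in how the singularity is cured.

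You restrict the perturbation to the affine subspace $\beta^{a,0}+\sU^a$ on which $\gamma^a$ remains \emph{exactly} a null vector of $I-\sum_k(E\cdotb\beta)_k^*(E\cdotb\beta)_k$, and then need $\det B(\cdot,N)$ not to vanish identically on that subspace. The dimension count you give is only heuristic: a hypersurface of codimension one can contain affine subspaces of arbitrarily large dimension, so the inequality $\dim\sU^a>1$ says nothing by itself. You recognize this and propose to exhibit an explicit $\eta$ supported in a padding block, but you do not carry this out, and it is not clear how the padding interacts with the multilinear structure of $B(\beta,N)=(\wbeta_{u,w})$ in a way that guarantees nonvanishing. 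As written, the last step is a genuine gap.

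The paper sidesteps this issue entirely. Rather than confining the perturbation to a subspace that fixes the kernel vector, it perturbs $\beta^{a,0}$ \emph{freely} in $M_g(M_r(\C))$: by Lemma~\ref{l:stillinvertible} the set of $\beta$ with $B(\beta,N)$ invertible is open and dense, so arbitrarily small perturbations $\beta^a$ do the job. The price is that the kernel vector moves, but because $(\alpha^a,\zeta^a)\in\widehat{\partial^1\cB_E}$ the top eigenvalue of $\Lambda_E(\alpha^a)^*\Lambda_E(\alpha^a)$ is simple, and the Kato-type perturbation result recorded as Lemma~\ref{l:preserve} guarantees that the new kernel vector $\gamma^a$ is close to $\zeta^a$. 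A preliminary choice of $\epsilon>0$ ensures that the first components $\delta^a_1$ of the perturbed $\gamma^a$ still span $\C^e$. Thus the paper trades your algebraic constraint (which is hard to verify is generically satisfiable) for an analytic continuity argument (which is automatic once the eigenvalue is simple). If you want to repair your route, you would need either a concrete $\eta$ or a structural reason why $\sU^a$ cannot sit inside $\{\det B(\cdot,N)=0\}$; absent that, the paper's approach via Lemma~\ref{l:preserve} is the cleaner path.
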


The proof of Proposition \ref{p:hairspantospan} uses the following
special case  of a standard result from the theory of perturbation of matrices
\cite[Chapter 2, Section 4]{Kato}.

\begin{lemma}
 \label{l:preserve}
   Suppose $R\in M_d(\C),$  $I-R\succeq 0$ and $\ker(I-R)$ is one-dimensional
   and spanned   $v\in \C^d.$ For each $\epsilon>0,$ 
   there is a $\mu>0$ such that if $Q\in M_d(\C)$ is 
   self-adjoint and $\|Q\|<\mu$, then there is a $c>0$ and 
   $w\in \C^d$ such that $I-c(R+Q)\succeq 0,$ 
    $\ker(I-c(R+Q))$ is spanned by $w$ and $\|v-w\|<\epsilon.$
\end{lemma}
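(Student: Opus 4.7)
The plan is to invoke standard perturbation theory of a simple isolated eigenvalue of a self-adjoint matrix. The hypothesis $I-R\succeq 0$ forces $R$ to be self-adjoint, so its spectrum lies in $(-\infty,1]$, and the assumption that $\ker(I-R)=\mathbb{C}v$ says that $\lambda=1$ is a \emph{simple} top eigenvalue of $R$. Everything reduces to perturbing this situation and then rescaling.

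First I would fix a gap $\eta>0$ with $\eta<\tfrac{1}{2}(1-\lambda_2)$, where $\lambda_2$ is the second largest eigenvalue of $R$ (so $\lambda_2<1$ by simplicity). For self-adjoint $Q$ with $\|Q\|<\eta$, Weyl's inequalities give $|\lambda_j(R+Q)-\lambda_j(R)|\le\|Q\|<\eta$ for each $j$, so the top eigenvalue $\lambda_1(R+Q)$ remains simple, positive and isolated from the remaining spectrum by at least $\eta$. Set
\[
c \;=\; \frac{1}{\lambda_1(R+Q)}\;>\;0.
\]
Then $c(R+Q)$ is self-adjoint with top eigenvalue $1$ and all other eigenvalues strictly below $1$. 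Consequently $I-c(R+Q)\succeq 0$ and $\ker(I-c(R+Q))$ is the one-dimensional top eigenspace of $R+Q$, spanned by any unit top eigenvector $w$.

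Next I would argue $w$ can be chosen close to $v$. This is a direct consequence of continuity of spectral projections for a simple isolated eigenvalue: letting $P_Q$ denote the orthogonal projection onto the top eigenspace of $R+Q$, the Riesz projection formula
\[
P_Q \;=\; \frac{1}{2\pi i}\oint_{\Gamma} (z-(R+Q))^{-1}\,dz,
\]
where $\Gamma$ is a small circle around $\lambda=1$ of radius $\eta$ separating $\lambda_1(R+Q)$ from the rest of the spectrum, shows $\|P_Q-P_0\|\to 0$ as $\|Q\|\to 0$, because $(z-(R+Q))^{-1}\to(z-R)^{-1}$ uniformly on $\Gamma$. Then choose $w:=P_Q v/\|P_Q v\|$ (well-defined for $\|Q\|$ small, since $P_0 v=v\ne 0$); this is a unit top eigenvector of $R+Q$ with $\|w-v\|\to 0$.

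Given $\epsilon>0$, one picks $\mu\in(0,\eta]$ small enough that the two preceding estimates force $\|w-v\|<\epsilon$. The only subtlety, and the one bit where care is needed, is keeping the isolation of $\lambda=1$ under the perturbation so that the Riesz projection argument applies; this is precisely why the gap $\eta$ is introduced at the outset. No step is genuinely hard — the result is essentially the one-dimensional case of Kato's analytic perturbation theorem for self-adjoint operators, which I would cite as \cite[Chapter 2, Section 4]{Kato} to keep the proof short.
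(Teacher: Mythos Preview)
Your proposal is correct and matches the paper's approach: the paper does not actually prove this lemma but simply remarks that it is a special case of standard matrix perturbation theory and cites \cite[Chapter~2, Section~4]{Kato}. Your write-up is precisely the standard argument underlying that citation (Weyl's inequalities for simplicity of the top eigenvalue, Riesz projection for continuity of the eigenvector), so there is nothing to add.
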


\begin{proof}[Proof of Proposition~\ref{p:hairspantospan}]
  Since $E$ is \bmin, there is an $r$ and  a boundary hair spanning set
  $\{(\alpha^a,\zeta^a):1\le a\le e\} \subset M_r(\C)^g\times [\C^e\otimes \C^r]$
  for $\cB_E$ by    Proposition \ref{p:hairspans}. 
  In particular, writing $\zeta^a =\sum_{t=1}^r \chi^a_t \otimes\basr_t,$
  the set $\{\chi^a_1:1\le a\le e\}$ spans $\C^e.$
   There  is an $\epsilon>0$ such that, if 
   $\tau^a=\sum_{t=1}^g \tau^a_t\otimes \rho_t$ and 
  $\| \zeta^a-\tau^a\|<\epsilon$ for each $1\le a\le e,$ 
  then the set $\{\tau^a_1: 1\le a\le e\}$ spans $\C^e.$

  Fix $1\le a\le e$ and let, for $1\le j,k\le g,$
\[
 \tbeta^a_{j,k} = \begin{cases} \alpha^a_j & \ \mbox{ if } k=1\\
                                  0 & \ \mbox{ if } k>1. \end{cases}
\]
 Thus 
\[
   I-[\sum_{j=1}^g E_j \otimes \tbeta^a_{j,1}]^* \, 
       [\sum_{j=1}^g E_j \otimes \tbeta^a_{j,1}]
  = \Qre_E(\alpha^a)
\]
 is positive semidefinite with kernel 
  spanned by $\zeta^a.$
 By Lemmas \ref{l:stillinvertible} and \ref{l:preserve},
 there exists a $\beta^a\in M_g(M_r(\C))$ 
  such that $B(\beta^a,N)$ is invertible and 
\begin{equation}
 \label{e:afterperturb}
   R(\beta^a):= I-\sum_{k=1}^g \left ( [\sum_{j=1}^g E_j\otimes \beta_{j,k}]^* \,
          [\sum_{j=1}^g E_j\otimes \beta_{j,k}]\right )
      = I -\sum_{k=1}^g (E\cdotb \beta)_k^* \, (E\cdot \beta)_k 
\end{equation} 
  is positive semidefinite and has kernel spanned by
  a vector $\gamma^a$ such that $\|\zeta^a-\gamma^a\|<\epsilon.$
  In particular, writing $\gamma^a=\sum_{t=1}^r \delta^a_t \otimes\basr_t,$ 
  from the first paragraph of the proof, 
  the set  $\{\delta^a_1:1\le a\le e\}$ spans $\C^e.$ 

  To complete the proof,  observe, using $R(\beta^a)$ defined
 in equation \eqref{e:afterperturb} and Lemma \ref{l:schifty}, that
\[
\Qre_E(\beta^a \cdotb S) 
  = R(\beta^a)\otimes P \, + \, I\otimes (I-P).
\]
  It follows that $\{(\beta^a\cdotb S,\gamma^a\otimes 1):1\le a\le e\}$ 
 is a boundary spanning set for $\cB_E.$
\end{proof}

\subsubsection{Proof of Proposition \ref{p:HKVpBergman}}
 Suppose $E$ is \bmin\footnote{It is enough to assume that
  $PE$ is \bmin, where $P$ is the projection of $\C^d$ onto
 $\rg(E).$}  and $F\in \pxx^{1\times e}$ vanishes on 
 $\widehat{\partial \cB_E}$ and  has degree at most $M.$ 
 
 Fix $1\le N\le M.$ 
 By Proposition \ref{p:hairspantospan}, there exists an $r>0$ and 
 $(\beta^a,\gamma^a)\in M_g(M_r(\C))\times [\C^e\otimes \C^r]$
 such that $\{(\beta^a\cdotb S,\gamma^a\otimes 1):1\le a\le e\}$
 is a boundary spanning set for $\cB_E$ and  $B(\beta^a,N)$
 is invertible for each $1\le a\le e.$
   Since $(\beta^a\cdotb S,\gamma^a)\in \widehat{\partial \cB_E},$
 it follows that   $0= F(\beta\cdotb S) \gamma^a.$ 
 An application of Proposition \ref{p:ifspan} implies 
 $F^s_w=0$ for all $1\le s\le e$ and $|w|=N.$  
 Hence $F^s_w=0$ for all $1\le s\le e$ and $|w|\le M$ and
 therefore $F=0.$ 
 To complete the proof, given $V\in \pxx^{\ell\times e}$ that
 vanishes on $\widehat{\partial \cB_E}$, apply
 what has already been proved to each row of $V$ to conclude $V=0.$
\qed

\subsection{Theorem~\ref{t:pencil-ball-alt}}
\label{sec:winning}
In this subsection we prove the first part  Theorem~\ref{t:pencil-ball-alt}.
(The conversely portion was already proved as Corollary~\ref{c:pba-converse}.)

A free analytic mapping $f$ into $M(\C)^h$ defined in a neighborhood of $0$
 of $M(\C)^g$ has a power series expansion (\cite{HKMforbill,Voi04,KVV14}),
\begin{equation}
\label{e:powf}
 f(x) = \sum_{j=0}^\infty G_j(x) =\sum_{j=0}^\infty \sum_{|\alpha|=j} f_{\alpha} x^\alpha,
\end{equation}
where 
$f_\alpha \in \C^{1\times h}$. The term $G_j$ is  the \df{homogeneous of degree $j$}
 part of $f$. It is a polynomial mapping $M(\C)^g\to M(\C)^h$.

\begin{lemma}
\label{lem:useproper}
 Let $E\in M_{d\times e}(\C)^g$
  and $B\in M_r(\C)^h$. 
  Suppose 
 $f:\interior(\cB_E)\to  \interior(\cD_B)$ is proper.
 For each positive integer $N$ there exists a free polynomial
 mapping $p=p_N$ of degree at most $N$ such that if 
  $X\in \cB_E$ is nilpotent of order $N$, then 
 $f_X(z):=f(zX)=p(zX)$ for $z\in \C$ with $|z|<1$.  Further,
 if $X\in \partial \cB_E$ (equivalently $\|\Lambda_E(X)\|=1$),
 then $p(X)\in \partial \cD_B$.  
\end{lemma}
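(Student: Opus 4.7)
The plan is to take $p_N$ to be the degree-$(N{-}1)$ truncation of the free power series of $f$. Since $f$ is free analytic on the open set $\interior(\cB_E)$ containing the origin, equation \eqref{e:powf} yields a convergent expansion $f(x)=\sum_{j=0}^\infty G_j(x)$ about $0$, where $G_j$ is a free polynomial homogeneous of degree $j$. I set
\[
p_N(x) := G_0(x)+G_1(x)+\cdots+G_{N-1}(x),
\]
a free polynomial mapping of degree at most $N-1\le N$.

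For the first assertion, fix $X\in\cB_E$ nilpotent of order $N$, so $X^\alpha=0$ for every word $\alpha$ of length $\ge N$ and therefore $G_j(X)=0$ for $j\ge N$. For $|z|<1$ one has $\|\Lambda_E(zX)\|=|z|\,\|\Lambda_E(X)\|<1$, hence $zX\in\interior(\cB_E)$ and the scalar-variable analytic function $z\mapsto f(zX)$ is defined on $|z|<1$. Near $z=0$ it agrees with $\sum_j z^j G_j(X)$, which by nilpotence collapses to the polynomial $\sum_{j<N} z^j G_j(X)=p_N(zX)$; by analytic continuation in $z$, the identity $f(zX)=p_N(zX)$ persists throughout $|z|<1$.

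For the second assertion, fix such an $X$ lying on $\partial\cB_E$ (so $\|\Lambda_E(X)\|=1$) and let $t_k=1-\tfrac{1}{k}$. Then $t_kX\in\interior(\cB_E)$ and $t_kX$ is still nilpotent of order $N$, so the first assertion gives $f(t_kX)=p_N(t_kX)$, whence
\[
f(t_kX)=p_N(t_kX)\longrightarrow p_N(X)\quad\text{as }k\to\infty.
\]
Since each $p_N(t_kX)$ lies in the closed set $\cD_B$, so does $p_N(X)$. Suppose toward contradiction that $p_N(X)\in\interior(\cD_B)$. Then the convergent image sequence $\{f(t_kX)\}$ is eventually contained in a compact neighborhood $K$ of $p_N(X)$ inside $\interior(\cD_B)$. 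By properness of $f$, $f^{-1}(K)\subseteq\interior(\cB_E)$ is compact and contains the tail of $\{t_kX\}$, so this sequence has a subsequential limit in $\interior(\cB_E)$. But its only subsequential limit is $X\in\partial\cB_E$, which is disjoint from $\interior(\cB_E)$, a contradiction. Hence $p_N(X)\in\partial\cD_B$.

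The only genuinely nontrivial step is the last one, where properness converts the geometric fact that $\{t_kX\}$ leaves every compact subset of $\interior(\cB_E)$ into the corresponding statement for the image sequence inside $\interior(\cD_B)$; everything else is bookkeeping with the homogeneous components $G_j$ and the collapse $G_j(X)=0$ for $j\ge N$ on nilpotent tuples.
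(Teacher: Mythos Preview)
Your proof is correct and follows essentially the same approach as the paper: truncate the free power series of $f$, use nilpotence of $X$ to collapse higher-degree terms so that $f(zX)$ agrees with the polynomial $p_N(zX)$ for small $z$, extend by analytic continuation to $|z|<1$, and then for $X\in\partial\cB_E$ use properness of $f$ to derive a contradiction from $p_N(X)\in\interior(\cD_B)$. The only cosmetic discrepancy is that the paper truncates at degree $N$ (interpreting ``nilpotent of order $N$'' as $X^\alpha=0$ for $|\alpha|>N$) and omits $G_0$, whereas you truncate at degree $N-1$ and retain $G_0$; neither choice affects the argument.
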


\begin{proof}
 Fix a positive integer $N$.
The  series expansion of equation \eqref{e:powf} converges as written
 on $\mathcal N_\epsilon =\{X\in M(\C)^g : \sum X_j X_j^* \prec \epsilon^2\}$
 for any $\epsilon>0$ such that $N_\epsilon \subset \interior(\cB_E)$
 \cite[Proposition 2.24]{HKMforbill}. In particular, if $X\in \cB_E$
 is nilpotent of order $N$ and $|z|$ is small, then 
\[
 f_X(z):=f(zX)=  \sum_{j=1}^N G_j(zX) = \sum_{j=1}^N \left [\sum_{|\alpha|=j} 
   f_\alpha \otimes X^\alpha \right ] z^j =: p(zX).
\]
It now follows that $f_X(z)=p(zX)$ for $|z|<1$ (since $zX\in \interior(\cB_E)$
 for such $z$ and both sides are analytic in $z$ and agree on a
 neighborhood of $0$).

 Now suppose $X\in\partial \cB_E(n)$ (still nilpotent of order $N$).
 Since $f:\interior(\cB_E)\to \interior(\cD_B)$, it follows that
 $\LRE_B(p(tX))\succ 0$ for $0<t<1$.  Thus $\LRE_B(p(X))\succeq 0$.
  Arguing by contradiction, suppose
 $\LRE_B(p(X))\succ 0$; that is $p(X)\in \interior(\cD_B(n))$. Hence
 there is an $\eta$ such that 
\[
\overline{B}_\eta(p(X)):=\{Y\in M_n(\C)^g: \|Y-p(X)\|\le \eta\}
  \subset \interior(\cD_B(n)).
\]
Since $K=\overline{B}_\eta(p(X))$ is compact,
$L=f_n^{-1}(K)\subset \interior(\cB_E)$ is also compact by
the proper hypothesis on $f$ (and hence on  each 
$f_n:\interior(\cB_E(n))\to \interior(\cD_B(n))$).
On the other hand, for $t<1$ sufficiently large, $tX\in L$,
but $X\notin \interior(\cB_E(n))$, and we have arrived
 at the contradiction that $L$ cannot be compact.
\end{proof}

\begin{remark}\rm
\label{rem:useproper}
 In view of Lemma \ref{lem:useproper}, for $X\in \partial \cB_E$
 nilpotent we let $f(X)$ denote $f_X(1)$.  Observe also, if $g=h$,
 $f(0)=0,$  $f^\prime(0)=I_g$ and $X\in \cB_E$ is nilpotent of 
 order two, then $f(X)=X$.
\qed\end{remark}

\begin{lemma}
\label{l:agr2}
 Suppose $B\in M_r(\C)^g$ and $\QQ\in M_{r\times u}(\C)$
 and let $\mathscr{B}$ denote the algebra generated by $B$.
 Let $h$ denote the dimension of $\mathscr{B}$ as a vector space.  If
  $\{B_1\QQ,\dots,B_g\QQ\}$ is linearly independent, then there 
  exists a $g\le t\le h$ and a basis $\{J_1,\dots,J_h\}$ of
  $\mathscr{B}$ such that
\begin{enumerate}[\rm (1)]
 \item $J_j=B_j$ for $1\le j\le g$;
 \item \label{it:JQind} $\{J_1\QQ,\dots,J_t\QQ\}$ is linearly independent; and 
 \item $J_j\QQ=0$ for $t<j\le h$.
\end{enumerate}

 Letting $\Xi\in M_h(\C)^h$ denote the convexotonic tuple associated to $J$,
\[
 (\Xi_j)_{\ell,k} = 0 \ \mbox{ for }  j>t, \, k\le t \mbox{ and } 1\le \ell\le h.
\]
\end{lemma}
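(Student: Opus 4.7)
\medskip
\noindent\textbf{Proof plan.}
The plan is to construct the basis $\{J_1,\dots,J_h\}$ by analyzing the linear map
$$R_\QQ \colon \mathscr{B} \longrightarrow M_{r\times u}(\C), \qquad A \longmapsto A\QQ.$$
Let $K = \ker R_\QQ$ and set $t = h - \dim K$, the rank of $R_\QQ$. Since $\{B_1\QQ,\dots,B_g\QQ\}$ is linearly independent, the restriction of $R_\QQ$ to $\spann\{B_1,\dots,B_g\}$ is injective, hence $\spann\{B_1,\dots,B_g\} \cap K = \{0\}$ and $t \ge g$.

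Next I would extend $\{B_1,\dots,B_g\}$ to a basis $\{J_1,\dots,J_t\}$ of a vector space complement $W$ of $K$ in $\mathscr{B}$, where $J_j := B_j$ for $1 \le j \le g$. By construction $R_\QQ|_W$ is injective, so $\{J_1\QQ,\dots,J_t\QQ\}$ is linearly independent; this gives items (1) and (2). Finally, pick any basis $\{J_{t+1},\dots,J_h\}$ of $K$ to obtain item (3). Linear independence of $J$ is immediate since $W$ and $K$ are complementary. Because $\{J_1,\dots,J_h\}$ is a basis of the algebra $\mathscr{B}$, Lemma \ref{l:secret} (or rather the discussion around equation \eqref{e:JgetsXi}) produces the convexotonic tuple $\Xi \in M_h(\C)^h$ satisfying $J_\ell J_j = \sum_{s=1}^h (\Xi_j)_{\ell,s} J_s$.

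For the vanishing claim, fix $j > t$ and $1 \le \ell \le h$. Right-multiplying the structural identity by $\QQ$ and using associativity together with $J_j \QQ = 0$, one obtains
\[
0 \;=\; J_\ell (J_j \QQ) \;=\; (J_\ell J_j)\QQ \;=\; \sum_{s=1}^h (\Xi_j)_{\ell,s}\, J_s\QQ \;=\; \sum_{s=1}^t (\Xi_j)_{\ell,s}\, J_s\QQ,
\]
where the last equality discards the terms with $s > t$ since $J_s\QQ = 0$ there. The linear independence of $\{J_1\QQ,\dots,J_t\QQ\}$ then forces $(\Xi_j)_{\ell,k} = 0$ for all $1 \le k \le t$, which is exactly the asserted identity.

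I do not anticipate a real obstacle; the statement is essentially a linear algebra observation once the basis is chosen to be compatible with the filtration $K \subset \mathscr{B}$. The only minor point to be careful about is ensuring $\{B_1,\dots,B_g\}$ can be extended inside a fixed complement of $K$, which is automatic from $\spann\{B_1,\dots,B_g\} \cap K = \{0\}$.
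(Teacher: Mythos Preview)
Your proposal is correct and follows essentially the same approach as the paper: both define the kernel $K=\{T\in\mathscr{B}:T\QQ=0\}$, set $t=h-\dim K$, build the basis by extending $\{B_1,\dots,B_g\}$ through a complement of $K$ and adjoining a basis of $K$, and then obtain the vanishing of $(\Xi_j)_{\ell,k}$ for $j>t$, $k\le t$ by right-multiplying the structural relation $J_\ell J_j=\sum_s(\Xi_j)_{\ell,s}J_s$ by $\QQ$ and invoking the linear independence of $\{J_1\QQ,\dots,J_t\QQ\}$. The only cosmetic difference is that the paper remarks $K$ is a left ideal, which is implicit in your associativity step $J_\ell(J_j\QQ)=(J_\ell J_j)\QQ$.
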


\begin{proof}
 The set $\mathscr{N}=\{T\in \mathscr{B}: T\QQ=0\}\subseteq \mathscr{B}$ is a subspace(in fact a left ideal). Since $\{B_1\QQ,\dots,B_g\QQ\}$ is 
 linearly independent,  the subspace  $\mathscr{M}$ of $\mathscr{B}$
 spanned by  $\{B_1,\dots,B_g\}$  has dimension $g$ and satisfies 
 $\mathscr{M}\cap \mathscr{N}=\{0\}$. Thus there is a $g\le t\le h$
 such that  $h-t$ is the dimension
 of $\mathscr{N}.$ Choose a basis  $\{J_{t+1},\dots,J_h\}$ for $\mathscr{N}$. 
   Thus the set $\{B_1,\dots,B_g,J_{t+1},\dots,J_h\}$
 is linearly independent and $g\le t\le h.$ Extend it to a basis
 $\{J_1,\dots,J_h\}$. To see that item \eqref{it:JQind}
 holds, we argue by contradiction. If $\{J_1\QQ,\dots,J_t\QQ\}$ is linearly
 dependent, then some linear combination of $\{J_1,\dots,J_t\}$
 lies in $\mathscr{N}$.

 The last statement is a consequence of the fact that $\mathcal N$
 is a left ideal. Indeed,  since  the tuple $\Xi$ satisfies,
\[
 J_\ell J_j =\sum_{k=1}^h (\Xi_j)_{\ell,k} J_k
\]
 for $1\le j,\ell\le h$ we have, for  $j>t$ and $1\le \ell \le h$,
\[
0= J_\ell J_j\QQ = \sum_{k=1}^h (\Xi_j)_{\ell,k} J_k \QQ 
  =\sum_{k=1}^t (\Xi_j)_{\ell,k} J_k \QQ.
\]
 By independence of $\{J_k\QQ: 1\le k\le t\}$, it follows that
 $(\Xi_j)_{\ell,k}=0$ for $k\le t$.
\end{proof}

\begin{lemma}
\label{l:DimpliesB}
Let $E\in M_{d\times e}(\C)^g$ and $\AF\in M_{r}(\C)^g$. If there is a
proper free  analytic mapping $f:\interior(\cB_E)\to \interior(\cD_{\AF})$
such that $f(0)=0$ and $f^\prime(0)=I$, then $\cB_E=\cB_{\AF}$.
\end{lemma}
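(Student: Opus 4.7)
The approach is a reduction to tuples that are nilpotent of order two (which I will call ``nilp-2''), followed by a bootstrap to all tuples via the upper-triangular embedding $X\mapsto\widetilde{X}=\bigl(\begin{smallmatrix} 0 & X \\ 0 & 0 \end{smallmatrix}\bigr)$. First, by Lemma~\ref{lem:useproper} applied with $N=2$ together with Remark~\ref{rem:useproper}, the normalizations $f(0)=0$ and $f^\prime(0)=I$ force $f(X)=X$ for every nilp-2 $X\in\cB_E$, since every homogeneous piece of $f$ of degree at least two vanishes on a nilp-2 tuple. The same lemma yields the boundary assertion that $X=p_2(X)\in\partial\cD_A$ whenever $X\in\partial\cB_E$ is nilp-2.

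The key structural observation is that on nilp-2 tuples, $\cD_A$ collapses to $\cB_A$: if $X$ is nilp-2, then $T=\Lambda_A(X)$ itself squares to zero, so in an orthonormal basis adapted to the decomposition $\ker T\oplus(\ker T)^\perp$ one has $T=\bigl(\begin{smallmatrix} 0 & S \\ 0 & 0 \end{smallmatrix}\bigr)$ with $\|S\|=\|\Lambda_A(X)\|$, and the Schur complement applied to $\Lre_A(X)=\bigl(\begin{smallmatrix} I & S \\ S^* & I \end{smallmatrix}\bigr)$ yields $\Lre_A(X)\succeq 0\iff \|\Lambda_A(X)\|\le 1$, and $\Lre_A(X)$ is singular-and-positive-semidefinite iff $\|\Lambda_A(X)\|=1$. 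Combined with the first paragraph, this gives the implications $X\in\cB_E\Rightarrow X\in\cB_A$ and $X\in\partial\cB_E\Rightarrow X\in\partial\cB_A$ for every nilp-2 $X$.

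For the reverse containment on nilp-2 tuples, take a nilp-2 $X\in\cB_A$. If $\Lambda_E(X)=0$ then $X\in\cB_E$ automatically; otherwise $s:=1/\|\Lambda_E(X)\|>0$ satisfies $sX\in\partial\cB_E$, hence $sX\in\partial\cB_A$ by the boundary statement above, so $\|\Lambda_A(X)\|=1/s=\|\Lambda_E(X)\|$, and $X\in\cB_A$ then forces $s\ge 1$ and $X\in\cB_E$. Thus $\cB_E$ and $\cB_A$ coincide on all nilp-2 tuples. Finally, $\widetilde{X}$ is nilp-2 for every tuple $X$ and satisfies $\|\Lambda(\widetilde{X})\|=\|\Lambda(X)\|$ for any homogeneous linear pencil $\Lambda$, so $X\in\cB_E\iff\widetilde{X}\in\cB_E\iff\widetilde{X}\in\cB_A\iff X\in\cB_A$, completing the desired equality $\cB_E=\cB_A$. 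The only substantive input is Lemma~\ref{lem:useproper} at $N=2$ and its boundary clause; everything else reduces to scaling and the tilde embedding.
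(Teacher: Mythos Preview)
Your proof is correct and follows essentially the same approach as the paper's: both rely on Lemma~\ref{lem:useproper} at $N=2$ to get $f(X)=X$ on nilpotent-of-order-two tuples and the boundary-to-boundary conclusion, then use the off-diagonal embedding $X\mapsto\widetilde{X}$ together with the equivalence $\widetilde{X}\in\partial\cD_A\iff\|\Lambda_A(X)\|=1$ to pass from the spectrahedron to the spectraball. The paper's version is slightly leaner---it applies the tilde trick immediately to an $X$ with $\|\Lambda_E(X)\|=1$ and reads off $\|\Lambda_A(X)\|=1$ directly, concluding by homogeneity---whereas you first establish the norm identity on all nilp-2 tuples via a scaling argument and only then invoke the embedding; but the substance is the same.
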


\begin{proof}
 We perform the off diagonal trick. Given a tuple $X$, let 
\[
S_X = \begin{pmatrix} 0 & X \\ 0 & 0 \end{pmatrix}.
\]
  Suppose $X\in M_n(\C)^g$ and $\|\Lambda_E(X)\|=1$. It 
 follows that $\|\Lambda_E(S_X)\|=1.$  Thus
  $S_X\in \partial \cB_E$.  Since 
 $f:\interior(\cB_E)\to \interior(\cD_A)$
 is proper with $f(0)=0$ and $f^\prime(0)=I$ 
 (and $S_X$ is nilpotent), $f(S_X)=S_X$
 (see Remark \ref{rem:useproper}), and $S_X\in \partial \cD_{\AF}$.
  Thus  $I+\Lambda_{\AF}(S_X)+\Lambda_{\AF}(S_X)^*$ is positive semidefinite
 and has a (non-trivial) kernel. Equivalently,
\[
  1=\|\Lambda_{\AF}(S_X)\|=\|\Lambda_{\AF}(X)\|.
\]
  Hence, by homogeneity, $\|\Lambda_E(X)\|=\|\Lambda_{\AF}(X)\|$
 for all $n$ and $X\in M_n(\C)^g$. Thus $\cB_E=\cB_{\AF}$.
\end{proof}

\subsubsection{Proof of Theorem~\ref{t:pencil-ball-alt}}
 We assume, without loss of generality, that $E$ is \bmin.
 We will now show $f$ is convexotonic.

 Lemma \ref{l:DimpliesB} applied to the
 proper free analytic mapping $f:\interior(\cB_E)\to \interior(\cD_A)$
 gives $\cB_E=\cB_A.$ 
 Applying  Lemma \ref{l:trans}\eqref{it:lt7} there exist
 $r\times r$ unitary matrices  $\ttW$ and $\ttV$ such that
 $A=\ttW (\begin{smallmatrix} E & 0 \\ 0 & R\end{smallmatrix}) \ttV^*,$
 where $R\in M_{(r-d)\times(e-d)}(\C)^g$ and  $\cB_E\subset \cB_R$.
 Replacing $A$ with the unitarily equivalent tuple $\ttV^* A\ttV$, we assume 
\begin{equation}
\label{e:agr1}
 A = U  \begin{matrix}   \begin{matrix} e & r\minus e \end{matrix} & \\
 \begin{pmatrix} E \, & 0\, \\ 0\,  & R\,   \end{pmatrix} &\!\!\!\! \begin{matrix} d\hfill \\ r\minus d \end{matrix}  \end{matrix}
 \
\end{equation}
where
\begin{equation}
\label{e:U}
 U = \ttV^* \, \ttW =  
  \begin{matrix}   \begin{matrix} d & r\minus d \end{matrix} & \\
 \begin{pmatrix} U_{11} & U_{12}\\U_{21} & U_{22} \end{pmatrix} &\!\!\!\! \begin{matrix} e\hfill \\ r\minus e \end{matrix}  \end{matrix}.
\end{equation}
  With respect to the orthogonal  decomposition in equation \eqref{e:agr1}, let
\[
 \QQ=\begin{pmatrix} I_{e} \\ 0_{r-e,e} \end{pmatrix} \in M_{r\times e}(\C).
\]
 We will use later the  fact that if $\Qre_E(X)\succeq 0$ and
 $\Qre_E(X)\gamma =0$, then $\Qre_A(X)\QQ\gamma =0$. For now observe
\begin{equation}
\label{eq:AR}
A_j \QQ = U \begin{pmatrix} E_j \\ 0 \end{pmatrix}.
\end{equation}
 Thus, since $\{E_1,\dots,E_g\}$ is linearly independent,
 the set $\{A_1\QQ,\dots, A_g\QQ\}$ is linearly independent.

We now apply Lemma \ref{l:agr2} to $A$ 
in place of $B$ and obtain
a basis $\{J_1,\dots,J_h\}$ for $\mathscr{A}$, the algebra
 generated by $\{A_1,\dots,A_g\}$, and a $g\le t\le h$
 such that $J_j=A_j$ for $1\le j\le g$, 
 the set $\{J_j\QQ: 1\le j\le t\}$ is linearly independent
 and $J_j\QQ=0$ for $t<j\le h$. Let $\upxi\in M_h(\C)^h$ 
 denote the convexotonic tuple associated to $J$
 and let $\Xi=-\upxi.$ Thus $(\Xi_j)_{\ell,k}=0$
 for $j>t,$ $k\le t,$ and all $\ell$ and 
\[
  J_\ell J_j = - \sum_{s=1}^h (\Xi_j)_{\ell,s} J_s.
\]
Let  $\varphi:\interior(\cD_J)\to \interior(\cB_J)$
  denote the convexotonic map
\[
 \varphi(x) =x (I-\Lambda_{\Xi}(x))^{-1}
\]
 from Proposition \ref{prop:properobvious}. 
Let $\iota:\cD_A\to \cD_{J}$ denote the inclusion. 
 By Corollary \ref{cor:obvious}  the composition
  $\varphi\circ \iota$ is proper from $\interior(\cD_A)$
 to  $\interior(\cB_J)$. Hence, 
 $\ff=\varphi\circ \iota \circ f$ is
  proper from $\interior(\cB_E)$ to $\interior(\cB_J).$
Further $\ff(0)=0$ and $\ff^\prime(0)=\begin{pmatrix} I_g & 0 \end{pmatrix}$
 because essentially the same is true for each of the components
 $f,\iota,\varphi$. Thus
  $\ff(x) = \begin{pmatrix} x & 0 \end{pmatrix} + \rho(x)$,
 where $\rho(0)=0$ and $\rho^\prime(0)=0$.

Write 
\[
 \ff= \begin{pmatrix} \ff^{1} & \dots & \ff^{h}\end{pmatrix}.
\]
 Expand $\ff$ as a power series,
\[
  \ff= \sum H_j = \sum_{j=1}^\infty \sum_{|\alpha|=j} \ff_\alpha \, \alpha,
\]
where $H_j$ is the homogeneous of degree $j$ part of $\ff$.  Thus,
\[
 H_j = \begin{pmatrix} H_j^{1} & \dots & H_j^{h} \end{pmatrix}
\]
 and $H_1(x) = \begin{pmatrix} x & 0 \end{pmatrix}.$
Likewise,  
\[
\ff_{x_j}(x) = \begin{pmatrix} 0 & \dots &0 & x_j & 0 &\dots & 0\end{pmatrix}
\]
 for $1\le j\le g$ and $\ff_{x_j}=0$ for $j>g$.  

 The next objective is to show $H_m^s=0$ for  $m\ge 2$ and $s\le t$. 
 Given a positive integer $m$, let $S$ denote the $(m+1)\times (m+1)$ 
 matrix, indexed by $j,k=0,1,\dots,m,$ with $S_{a,a+1}=1$ and $S_{a,b}=0$
 otherwise. Thus $S$ has ones on the first super diagonal and $0$ 
  everywhere else and $S^{m+1}=0.$ 
  Let  $Y\in \cB_E$ be given.  Since $S\otimes Y$ is 
 nilpotent with $(S\otimes Y)^\alpha=0$ if $\alpha$ is a word with $|\alpha|>m$,
 Lemma \ref{lem:useproper} (and Remark \ref{rem:useproper})
 imply $\ff(S\otimes Y)\in \cB_J$; that is
 if $\|\Lambda_E(Y)\|\le 1$, then $\|\Lambda_J(\ff(S\otimes Y))\|\le 1.$ 
  Let $\sZ^j = \ff^j(S\otimes Y)=\sum_{\mu=1}^m S^\mu\otimes H_\mu^j(Y).$
  With respect to the natural block matrix decomposition,
  $\sZ^j_{0,m} = H^j_m(Y)$ and $\sZ^j_{m-1,m}=H^j_1(Y)$.
 Thus $\sZ^j_{m-1,m}=Y_j$
 for $1\le j\le g$ and $\sZ^j_{m-1,m}=H^j_1(Y)=0$ for $j>g.$
 Now $\|\Lambda_J(\sZ)\|\le 1$ is equivalent to 
 $I-\Lambda_J(\sZ)^*\Lambda_J(\sZ)\succeq 0$. Thus,
\[
 I-\Lambda_A(Y)^* \Lambda_A(Y) -\Lambda_J(H_m(Y))^* \Lambda_J(H_m(Y)) \succeq 0.
\]
 Multiplying on the right by $\QQ\otimes I$ and on the left by $\QQ^*\otimes I$,
\[
 I - \Lambda_{A\QQ}(Y)^* \Lambda_{A\QQ}(Y) 
  - \Lambda_{J\QQ}(H_m(Y))^* \Lambda_{J\QQ}(H_m(Y)) \succeq 0.
\]
 By equation \eqref{eq:AR} 
  $\Lambda_{A\QQ}(Y)^*\Lambda_{A\QQ}(Y) = \Lambda_E(Y)^*\Lambda_E(Y)$, and hence,
\begin{equation}
\label{e:QY0}
\begin{split}
\Qre_E(Y) - & \Lambda_{J\QQ}(H_m(Y))^* \Lambda_{J\QQ}(H_m(Y))\\
 & =   I - \Lambda_{E}(Y)^* \Lambda_{E}(Y)  
  - \Lambda_{J\QQ}(H_m(Y))^* \Lambda_{J\QQ}(H_m(Y)) \succeq 0.
\end{split}
\end{equation}

Let $V(y)=\Lambda_{J\QQ}(H_m(y)).$ If $(Y,\gamma)\in \widehat{\partial \cB_E},$
 then $\Qre_E(Y)\gamma=0$ and hence, by equation \eqref{e:QY0},
 $V(Y)\gamma=0.$ Thus $V$ vanishes on $\widehat{\partial \cB_E}$
 and hence $V=0$ by Proposition \ref{p:HKVpBergman}; that is
\[
 0 = V(y) = \Lambda_{J\QQ}(H_m(y)) = \sum_{j=1}^h  J_j\QQ \, H_m^j(y) 
  = \sum_{j=1}^t  J_j\QQ\, H_m^j(y).
\]
 Since $\{J_1\QQ,\dots,J_t \QQ\}$ is linearly independent, 
 it follows that $H_m^j(y)=0$ for all $1\le j\le t$ and all $m\ge 2$.
Hence,
\[
 \ff(x) =
  \begin{pmatrix} x & 0 & \Psi(x)\end{pmatrix} 
\]
 where the $0$ has length $t-g$ and $\Psi$ has length $h-t$ and moreover,
 $\Psi(0)=0$ and $\Psi^\prime(0)=0.$

 Let $\psi$ denote the inverse of $\varphi$, 
\[
 \psi(x) = x(I+\Lambda_{\Xi}(x))^{-1}.
\]
 Thus, $\psi\circ \ff= \iota \circ f = \begin{pmatrix} f(x) &0&0\end{pmatrix}$
 and consequently,
\begin{equation}
 \label{e:agr2}
 \begin{pmatrix} f(x) &0&0\end{pmatrix}
  = \begin{pmatrix} x & 0 & \Psi(x)\end{pmatrix} 
 \left((I+\Lambda_{\Xi}( \begin{pmatrix} x & 0 & \Psi(x)\end{pmatrix}))\right)^{-1}.
\end{equation}
 Rearranging gives,
\begin{equation}
\label{e:agr3}
 \begin{pmatrix} x & 0 & \Psi(x)\end{pmatrix}  
   = \begin{pmatrix} f(x) &0&0\end{pmatrix} 
  (I+\Lambda_{\Xi}( \begin{pmatrix} x & 0 & \Psi(x)\end{pmatrix})).
\end{equation}
We now examine the $k$-th entry on the right hand side
 of equation \eqref{e:agr3}. First,  
\[
\begin{split}
 \big(I+\Lambda_{\Xi}( \begin{pmatrix} x & 0 & \Psi(x)\end{pmatrix})\big)_{\ell,k}
 &= 
 \big(I+\sum_{j=1}^g \Xi_j x_j  +\sum_{j=t+1}^h \Xi_j \Psi_{j-t}\big)_{\ell,k} \\
  &= I_{\ell,k}  + \sum_{j=1}^g (\Xi_j)_{\ell,k} x_j + \sum_{j=t+1}^h (\Xi_j)_{\ell,k}\Psi_{j-t}.
\end{split}
\]
 Since $(\Xi_j)_{\ell,k}=0$
 for $j>t$ and $k\le t$ (see Lemma \ref{l:agr2}), if $k\le t$, then 
\begin{equation*}
 \big(I+\Lambda_{\Xi}( \begin{pmatrix} x & 0 & \Psi(x)\end{pmatrix})\big)_{\ell,k}
  = I_{\ell,k}  + \sum_{j=1}^g (\Xi_j)_{\ell,k} x_j
\end{equation*}
for all $\ell.$
 Hence, the right hand side of equation \eqref{e:agr3},   for $g<k\le t$ (so that 
  $I_{\ell,k}=0$ for $\ell\le g$) is,
\begin{equation}
\label{eq:glessk}
\sum_{\ell=1}^g  f^\ell(x)\, 
\big(I+\Lambda_{\Xi}( \begin{pmatrix} x & 0 & \Psi(x)\end{pmatrix})\big)_{\ell,k}
  =  \sum_{j,\ell=1}^g  (\Xi_j)_{\ell,k} f^\ell(x)\, x_j
\end{equation}
 and similarly, for $1\le k\le g$, 
\begin{equation}
\label{eq:gbiggerk}
\sum_{\ell=1}^g  f^\ell(x)\, 
   (I+\sum_{j=1}^g \Xi_j x_j  +\sum_{j=t+1}^h \Xi_j \Psi_{j-t})_{\ell,k}
=f^k(x) + \sum_{j,\ell=1}^g ({\Xi})_{\ell,k} f^\ell(x) \, x_j.
\end{equation}
 Combining equations \eqref{eq:glessk} and \eqref{e:agr3}, for $g<k\le t$,
\[
 \sum_{j=1}^g \left [\sum_{\ell=1}^g (\Xi_j)_{\ell,k} f^{\ell}(x)\right ]x_j =0.
\]
Hence, for each $1\le j\le g$ and $g<k\le t$,
\[
\sum_{\ell=1}^g (\Xi_j)_{\ell,k} f^{\ell}(x) =0.
\]
Since $\{f^1,\dots,f^g\}$ is linearly independent, it follows that
\begin{equation}
\label{e:moreXi=0}
 (\Xi_j)_{\ell,k} =0, \ \ 1\le j,\ell \le g, \, g<k\le t.  
\end{equation}

We next show  $\widehat{\Xi}\in M_g(\C)^g$ defined by 
\[
(\widehat{\Xi}_j)_{\ell,k} = (\Xi_j)_{\ell,k}, \ \ \ 1\le j,\ell,k\le g
\]
 is convexotonic.
 Using equation \eqref{e:moreXi=0}, for  $1\le j,\ell\le g$,
\begin{equation}
\label{e:agr4}
\begin{split}
 A_\ell A_j \QQ & = J_\ell J_j \QQ = -\sum_{s=1}^h (\Xi_j)_{\ell,s} J_s \QQ
  =  -\sum_{s=1}^t (\Xi_j)_{\ell,s} J_s \QQ \\
  &  =  -\sum_{s=1}^g (\Xi_j)_{\ell,s} J_s \QQ
  =  -\sum_{s=1}^g (\Xi_j)_{\ell,s} A_s \QQ.
\end{split}
\end{equation}
 Multiplying equation \eqref{e:agr4} on the left by $U^*$
 and using equation \eqref{eq:AR} gives
\begin{equation*}
 \begin{pmatrix} E_\ell& 0\\0 & R_\ell \end{pmatrix}\,
   (-U) \, \begin{pmatrix} E_j \\ 0 \end{pmatrix} 
  = \begin{pmatrix} \sum_{s=1}^g (\Xi_j)_{\ell,s} E_s \\ 0 \end{pmatrix}.
\end{equation*}
 Using equation \eqref{e:U}, it follows that 
\begin{equation}
\label{e:EUE}
 E_\ell (-U_{11}) E_j = \sum_{s=1}^g (\Xi_j)_{\ell,s} E_s =
   \sum_{s=1}^g (\widehat{\Xi}_j)_{\ell,s} E_s.
\end{equation}
By Lemma \ref{l:secret}, the tuple $\widehat{\Xi}$ is convexotonic.

Combining equation \eqref{e:agr3} and equation \eqref{eq:gbiggerk},
if $1\le k\le g$, then 
\[
\begin{split}
x_k & =  \sum_{\ell=1}^g  f^\ell(x)\, 
  (I+\Lambda_{\Xi}( \begin{pmatrix} x & 0 & \Psi(x)\end{pmatrix}))_{\ell,k} \\
 &=  f^k(x)+ \sum_{j,\ell=1}^g  (\Xi_j)_{\ell,k} f^\ell(x)\, x_j = f^k(x) + \sum_{j,\ell=1}^g (\widehat{\Xi}_j)_{\ell,k} f^\ell(x) \, x_j.
\end{split}
\]
Thus,
\[
 x = f(x)(I+\Lambda_{\widehat{\Xi}}(x))
\]
and consequently
\begin{equation}
\label{e:f}
 f(x) = x(I+\Lambda_{\widehat{\Xi}}(x))^{-1}
\end{equation}
 is convexotonic. 

We now complete the proof by showing, if $A$ 
is minimal for $\cD_A$ (we continue to assume $E$ is \bmin),
 then $A$ is unitarily equivalent to 
\begin{equation}
\label{e:BUE}
 B= U \begin{pmatrix} E& 0  \\ 0 & 0 \end{pmatrix} 
  = \begin{pmatrix} U_{11} E  & 0 \\ U_{21} E & 0 \end{pmatrix}  \in M_r(\C)^g
\end{equation}
and  $B$ spans an algebra.
To this end, using equations \eqref{e:BUE} and \eqref{e:EUE}, observe
\[
\begin{split}
 B_\ell B_j & =  \begin{pmatrix} U_{11}E_\ell U_{11} E_j & 0 \\ U_{21} E_\ell U_{11} E_j  & 0 \end{pmatrix}
   =  \sum_{s=1}^g (-\widehat{\Xi}_j)_{\ell,s}  \begin{pmatrix} U_{11} E_s & 0 \\ U_{21} E_s & 0 \end{pmatrix} 
  =  \sum_{s=1}^g (-\widehat{\Xi}_j)_{\ell,s}  B_s.
\end{split}
\]
 Thus $B$ spans an algebra and, by Proposition \ref{prop:properobvious}, 
 the convexotonic map $f$ of equation \eqref{e:f} is a bianalytic map 
 $f:\interior(\cB_B)\to \interior(\cD_B)$.  On the other hand, $\cB_B=\cB_E=\cB_A$. Thus,
 as $f:\interior(\cB_E)\to \interior(\cD_A)$ is bianalytic, $\cD_B=\cD_A$. Since $A$ is minimal
defining for $\cD_A$ and $A$ and $B$ have the same size, $B$ is minimal for $\cD_A$.
Hence  $A$ and $B$ are unitarily equivalent by Lemma \ref{l:Z+}. From the form of 
$B$, it is evident that $r\ge \max\{d,e\}$. On the other hand, if $r>d+e$, then
$B$ must have $0$ as a direct summand and so is not minimal. Thus $r\le d+e.$ \hfill\qedsymbol

\subsection{Corollary~\ref{t:B2B}}
\label{subsec:TBD}
This subsection begins by illustrating Corollary \ref{t:B2B}
 in the case of free automorphism of free matrix balls and 
free polydiscs
before turning to the proof of the corollary.

\subsubsection{Automorphisms of free polydiscs}
\label{e:FPD}
 Let $\{e_1,\dots,e_{g}\}$ denote the usual orthonormal basis for $\C^{g}$
 and let $E_j = e_j e_j^*$. The spectraball $\cB_E$ is then the \df{free polydisc}
 with
\[
\interior(\cB_E(n)) = \{X\in M_n(\C)^g: \|X_j\|<1\}.
\]
 Let $b\in \interior(\cB_E(1)) = \mathbb D^g$ be given.  

In the setting of  Corollary \ref{t:B2B}, we choose $C=E$. 
If $\sV,\sW$ are
$g\times g$ unitary matrices such that equation Corollary \ref{t:B2B}\eqref{i:CME}
holds, then there exists a $g\times g$ permutation matrix $\Pi$ and unitary diagonal matrices
$\rho$ and $\mu$ such that $\sW=\Pi \rho$ and $\sV=\mu \Pi$. We can in fact assume
$\mu=I_g$.
It is now evident that item \eqref{i:ELCE} of Corollary \ref{t:B2B} holds and
determines $\Xi$.    Conversely, given a triple $(b,\Pi,\rho)$, 
where $b\in \mathbb D^g$, $\Pi$ is a $g\times g$
permutation matrix and $\rho$ is a diagonal unitary matrix, the equations
\eqref{i:CME} and \eqref{i:ELCE} of Corollary \ref{t:B2B} hold with
$\sW=\Pi \rho$ and $\sV=\Pi.$
  Hence the automorphisms of $\cB_E$ are determined by 
triples $(b,\Pi,\rho)$.

By pre (or post) composing with a permutation,  we may assume $\Pi=I_g$. In this case
$M$ is the $g\times g$  diagonal matrix  with diagonal entries  $M_{jj} = \rho_j (1-|b_j|^2)$ and  
 $\Xi_k = -\rho_j b_j^*  E_k$.  The corresponding 
convexotonic map $\psi(x) = x (I-\Lambda_\Xi(x))^{-1}$ has entries
\[
 \psi^j(x) =  x_j (1+c_j^* x_j)^{-1},
\]
where $c_j=\rho_j b_j^*$.
Thus the mapping $\varphi(x) = \psi(x)\cdotb M + b$ has entries,
\[
 \varphi^j(x) = \rho_j x_j (1+c_j^* x_j)^{-1}(1-|b_j|^2) + b_j 
  = \rho_j (x_j +c_j)(1+c_j^* x_j)^{-1},
\]
where $c_j= \rho_j b_j^*$.
Hence, the automorphisms
of the free polydisc are given by
\[
\varphi(x) = 
  \left (\rho_{\pi(1)} (x_{\pi(1)}+c_{\pi(1)})(1+c_{\pi(1)}^*x_{\pi(1)})^{-1},\ldots, \rho_{\pi(g)} (x_{\pi(g)}+c_{\pi(g)})(1+c_{\pi(g)}^* x_{\pi(g)})^{-1},
  \right )
\]
for $c=(c_1,\dots,c_g)\in \mathbb D^g,$  unimodular $\rho_j$ and a permutation
$\pi$ of $\{1,\ldots,g\}.$

\subsubsection{Automorphisms of free matrix balls}
\label{e:MT}
  Let $(E_{ij})_{i,j=1}^{d,e}$ denote the matrix units in $M_{d\times e}(\C)$
 and view $E\in M_{d\times e}(\C)^{de}$.  We consider automorphisms of
 $\cB_E$, the \df{free $d\times e$ matrix ball}.  \index{matrix ball}

  Before
 proceeding further, note, since $\{E_{ij}: 1\le i\le d,\, 1\le j\le e\}$
 spans all of $M_{d\times e}(\C)$, 
  by the reverse implication in Corollary \ref{t:B2B},
 any choice of $b$ in the unit ball of $M_{d\times e}(\C)$
 and $d\times d$ and $e\times e$ unitary matrices $\sW$ and $\sV$
 determines uniquely a $g\times g$ invertible matrix $M$
 satisfying the identity of item \eqref{i:CME}
 of Corollary \ref{t:B2B}. Likewise a 
 convexotonic tuple is uniquely determined by the identity of item \eqref{i:ELCE}.
 The resulting bianalytic automorphism $\varphi$ of $\cB_E$
  satisfying $\varphi(0)=b$ and $\varphi^\prime(0)=M$  is
 then given by the formula in Corollary \ref{t:B2B}.
 Our objective in the remainder of this  example is to show this formula for
 $\varphi$ agrees with
 that of \cite[Theorem 13]{MT16}.
 Doing so requires 
 passing back and forth
 between row vectors of length $de$ and  matrices of size $d\times e$.

First note that
\[
 \Lambda_E(b) = b.
\]
From item \eqref{i:CME} of Corollary \ref{t:B2B}
 (which defines $M$ in terms of $b$, $\sV$ and $\sW$),
\[
\begin{split}
   \sum_{u,v} M_{(i,j),(u,v)} E_{u,v} 
   & =     (M\cdotb E)_{i,j}\\
    & =   D_{\Lambda_E(b)^*} \sW E_{i,j} \sV^* D_{\Lambda_E(b)} \\
  & =  \sum_{u,v}  [e_u^* D_{\Lambda_E(b)^*} \sW e_i]\, 
      [e_j^*  \sV^* D_{\Lambda_E(b)} e_v]\, e_u e_v^*.
\end{split}
\]
Hence, 
\[
 M_{(i,j),(u,v)} = [e_u^* D_{\Lambda_E(b)^*} \sW e_i]\, 
      [e_j^*  \sV^* D_{\Lambda_E(b)} e_v].
\]
Next observe that, 
\[
- E_{ij} \sV^* \Lambda_E(b)^* \sW E_{st} = - e_ie_j^* \sV^* b^* \sW e_se_t^*
 = -(e_j^* \sV^* b^* \sW e_s) E_{it}.
\]
Hence, letting $\beta_{js} =-(e_j^* \sV^* b^* \sW e_s)$ for $1\le j\le e$
and $1\le s\le d$, the tuple $\Xi \in M_{de}(\C)^{de}$ defined by 
 (for $1\le i,u\le d$ and $1\le v\le e$)
\[
(\Xi_{st})_{(i,j),(u,v)}  = \begin{cases} \beta_{js} \, & v=t, \, u=i \\ 
    0 \, & \mbox{ otherwise,} \end{cases}
\]
satisfies the identity of equation item \eqref{i:ELCE} of 
Corollary \ref{t:B2B}. 
Hence the free bianalytic automorphism of $\cB_E$ determined by $b$, $\sW$ and $\sV$ is 
\begin{equation}
\label{e:varphide}
\varphi(x) =  \psi(x)\cdotb M + b
\end{equation}
where $\psi=x(I-\Lambda_\Xi(x))^{-1}$ is the convexotonic map determined by $\Xi$.

We next express  formula  for $\varphi$ in equation \eqref{e:varphide}
in terms of the canonical matrix structure on $\cB_E$. 
Given a matrix $\yy=(\yy_{ij})_{i,j=1}^{d,e}$, let
\[
\row(y) = \begin{pmatrix} y_{11} & y_{12} &\dots & y_{1e} & y_{21}&\dots &y_{de} \end{pmatrix}.
\]
Similarly, given $z=(z_j)_{j=1}^{de}$, let
\[
\matt_{d\times e} (z) = \begin{pmatrix} z_1 & z_2 & \dots & z_e \\ z_{e+1} & z_{e+2} & \dots & z_{2e}\\
  \\ \vdots & \vdots & \cdots & \vdots \\ z_{(d-1)e} & z_{(d-1)e+1} & \dots & z_{de} \end{pmatrix}.
\]
Since $d$ and $e$  are fixed in this example,
it is safe to abbreviate $\matt_{d\times e}$ to simply $\matt$.
For a tuple $\yy = (\yy_{s,t})_{s,t=1}^{d,e}$ of indeterminates,
\[
 \begin{split}
 (\yy\cdotb M)_{u,v} &=  \sum_{i,j}  M_{(i,j),(u,v)} \yy_{i,j} \\
  &= \sum_{i,j} [e_u^* D_{\Lambda_E(b)^*} \sW]\,   \yy_{i,j} e_i e_j^* \,[\sV^* D_{\Lambda_E(b)} e_v]\\
  &=  e_u^* \, [D_{\Lambda_E(b)^*} \sW]\,   \matt(\yy)  \,[\sV^* D_{\Lambda_E(b)}] \,  e_v.
\end{split}
\]
Thus,
\begin{equation}
\label{e:matyM}
 \matt(\yy \cdotb M)  = D_{\Lambda_E(b)^*} \sW\,  \matt(\yy)  \, \sV^* D_{\Lambda_E(b)}.
\end{equation}

Let 
\[
\Gamma_{(i,j),(u,v)}(x) := \Big(\sum_{s,t=1}^{d,e}  \Xi_{st} x_{st}\Big)_{(i,j),(u,v)} = 
 \begin{cases} \sum_{s=1}^d  \beta_{js} x_{sv}  &  u=i \\
    0 & \mbox{ otherwise.}
 \end{cases}
\]
Thus, $\Gamma$ is a $de \times de$ linear matrix polynomial of the form,
\[
 \Gamma = I_d \otimes \beta \matt(x)
\]
and  $(I-\Gamma)^{-1} = I_d\otimes (I-\beta \matt(x))^{-1}$.
In the formula for 
the convexotonic map  $\psi$ determined by $\Xi$, 
 the indeterminates $x=(x_{st})_{s,t}$ are arranged in a row 
and we find,
\[
\begin{split}
 \row(\psi(x)) = \row(x) (I-\Lambda_{\Xi}(x))^{-1} & = 
\begin{pmatrix} x_{11} & x_{12} & \dots x_{1e} &  x_{21} & \dots x_{de} \end{pmatrix}
 \, \Big(I\otimes (I-\beta \matt(x))^{-1}\Big) \\
& =  \begin{pmatrix} \hat{x}_1 (I-\beta \matt(x))^{-1} & \dots & \hat{x}_d (I-\beta \matt(x))^{-1} \end{pmatrix},
\end{split}
\]
where $\hat{x}_j = \begin{pmatrix} x_{j1}  &\dots x_{je}\end{pmatrix}$. Thus, 
\[
\begin{split}
 \row(x) & (I-\Lambda_{\Xi}(x))^{-1}  \\
  & =  {\small \begin{pmatrix}  (\matt(x)[I-\beta \matt(x)]^{-1})_{11} &  
        (\matt(x)[I-\beta \matt(x)]^{-1})_{12} & 
          \dots & (\matt(x)[I-\beta \matt(x)]^{-1})_{de} \end{pmatrix}.}
\end{split}
\]
Hence, in matrix form, 
\[
 \matt(\psi(x)) = \matt(x) (I-\beta \matt(x))^{-1} =\matt(x) (I+ (\sV^* b^* \sW)\matt(x))^{-1}.
\]
 Let $c=\sW^* b \sV$ and note 
\[
 I-\Lambda_E(b) \Lambda_E(b)^* = I-bb^* = I- \sW c c^* \sW^* = \sW(I-cc^*)\sW^*
  = \sW (I-\Lambda_E(c)\Lambda_E(c)^*)\sW^*.
\]
Thus,
\begin{equation}
\label{e:DsW=sWD}
 D_{\Lambda_E(b)^*} \sW = \sW D_{\Lambda_E(c)^*}
\end{equation}
and similarly $\sV^* D_{\Lambda_E(b)} = D_{\Lambda_E(c)} \sV^*$.
Consequently, using, in order,
equations \eqref{e:varphide}, \eqref{e:matyM},  and  \eqref{e:DsW=sWD}
together with the definition of $c$ in the first 
three equalities followed by some algebra,
\[
\begin{split}
 \matt(\varphi(x)) &=  \matt(\psi(x)\cdotb M) + b \\
 &=  D_{\Lambda_E(b)^*} \sW \matt(\psi) \sV^* D_{\Lambda_E(b)}   + b \\
 &=  \sW [D_{\Lambda_E(c)}^* \matt(\psi) D_{\Lambda_E(c)} + c] \sV^*\\
&=  \sW D_{\Lambda_E(c)^*} [\matt(\psi)  + D_{\Lambda_{E(c)^*}}^{-2} c ] D_{\Lambda_E(c)} \sV^* \\
&=  \sW D_{\Lambda_E(c)^*} [\matt(x)(I+c^*\matt(x))^{-1}   + D_{\Lambda_{E(c)^*}}^{-2} c ] D_{\Lambda_E(c)} \sV^* \\
&=  \sW D_{\Lambda_E(c)^*}^{-1}[D_{\Lambda_{E(c)^*}}^2 \matt(x) +  c (I+c^*\matt(x)) ]
   \,[I+c^*\matt(x)]^{-1} D_{\Lambda_E(c)} \sV^* \\
&= \sW (I-cc^*)^{-\frac12} [(1-cc^*)\matt(x) + c+ cc^*\matt(x)] \,[I+c^*\matt(x)]^{-1}
             D_{\Lambda_E(c)} \sV^* \\
 &= \sW (I-cc^*)^{-\frac12} [\matt(x)+c] \,[I+c^*\matt(x)]^{-1} (I-c^*c)^{\frac12} \sV^*,
\end{split}
\]
giving the  standard formula for the automorphisms of $\cB_E$ that
send $0$ to $b.$ (See, for example, \cite{MT16}.)

\subsubsection{Proof of Corollary \ref{t:B2B}}
\label{sssec:proof1point3}
Suppose $E=(E_1,\dots,E_g)\in M_{d\times e}(\C)^g$ and 
  $C=(C_1,\dots,C_g)\in M_{k\times \ell}(\C)^g$ are linearly independent  and
  \bmin and $\varphi:\interior(\cB_E)\to \interior(\cB_C)$ is bianalytic. 

Let 
$\hC$ denote the tuple
\[
 \hC_j = \begin{pmatrix} 0_{k,k} & C_j \\  0_{\ell,k} & 0_{\ell,\ell} \end{pmatrix}\in M_{r}(\C),
\]
where $r=k+\ell$.
 Thus $\cB_C=\cD_{\hC}$ and, since $C$ is \bmin, $\hC$ is minimal for $\cD_{\hC}$
 by Lemma \ref{l:trans}\eqref{it:lt3}.

  Let $b=\varphi(0)$ and for notational convenience, let 
 $\Lambda =\Lambda_C(b) \in M_{k\times \ell}(\C)$.  Set 
\begin{equation}
\label{d:sG}
 \sG = \begin{pmatrix} I_k & \Lambda \\ 0  & D_{\Lambda} \end{pmatrix}^{-1}
  =\begin{pmatrix} I_k & -\Lambda D_\Lambda^{-1} \\[.1cm] 0 & D_\Lambda^{-1} \end{pmatrix}
   \in M_r(\C),
\end{equation}
and observe that $\sG^* \M_C(b) \sG =I$ and therefore $\M_C(b)^{-1} = \sG \sG^*.$ Hence
 there is a unitary matrix $T$ such that $\sG=\M_E(b)^{-\frac12}T.$  It follows from 
 Proposition \ref{prop:aff},
letting   $\AF\in M_{r\times r}(\C)^g$ denote the $g$-tuple with entries
\begin{equation}
\label{d:Fj}
  \AF_j = \sG^* \, \begin{pmatrix} 0 & (M\cdotb C)_j \\ 0 & 0 \end{pmatrix} \, \sG
     \in M_r(\C)^g,
\end{equation}
 and  $M=\varphi^\prime(0),$ %
 that the inverse of the mapping $\lambda(x) = x\cdotb M +b$ 
is an affine linear bijection from  $\cB_C=\cD_{\hC}$ to $\cD_\AF$  and  $\AF$
 is minimal for $\cD_{\AF}.$

The mapping 
\[
\psif:= \lambda^{-1} \circ \varphi:\interior(\cB_E)\to \interior(\cD_\AF)
\]
is a free bianalytic mapping with $\psif(0)=0$
 and $\psif^\prime(0)=I$, where $E$ is \bmin and $\AF$ is minimal
for $\cD_{\AF}$.   An application of Theorem \ref{t:pencil-ball-alt} now implies that
there is a convexotonic tuple $\Xi$ such that equation \eqref{e:JgetsXi} holds,
$f$ is the corresponding convexotonic map and there are unitaries
$V$ and $W$ of size $r$ such that 
\begin{equation}
\label{e:F=WhatEV}
 \AF = W \begin{pmatrix} 0_{d,r-e} & E  \\ 0_{r-d,r-e} & 0_{r-d,e} \end{pmatrix} V^*.
\end{equation}
In particular, $\varphi(x)=f(x)\cdotb M + b.$

From equation \eqref{e:F=WhatEV},
\[
 \sum \AF_j^* \AF_j = V \begin{pmatrix}  0 & 0 \\ 0 & \sum_j E_j^* E_j \end{pmatrix} V^*
\]
and consequently $\rk \sum \AF_j^* \AF_j = \rk \sum E_j^* E_j$. 
Since $E$ is \bmin,  $\ker(E)=\{0\}$. Equivalently, 
 $\rk \sum E_j^* E_j = e$.
On the other hand, from equation \eqref{d:Fj}, 
\[
 \sum \AF_j^* \AF_j  = \sG^* \begin{pmatrix} 0 & 0 \\ 0 & (M\cdot C)^*_j \Gamma  (M\cdotb C)_j
 \end{pmatrix} \, \sG,
\]
 where $\Gamma$ is the $(1,1)$ block entry of $\sG \sG^*$.  Observe that $\Gamma$ is positive
 definite and,  since $C$ is \bmin, $\ker(M\cdot C)=\{0\}.$ Hence
 $\rk \sum \AF_j^* \AF_j=\ell.$  Thus $e= \ell$.  Computing
$\sum \AF_j \AF_j^*$ using equation \eqref{e:F=WhatEV} shows $\rk \sum A_jA_j^*=d$.
 On the other hand, using equation \eqref{d:Fj},
\[
 \sum_{j=1}^g  \AF_j \AF_j^* = \sG 
\begin{pmatrix} \sum_{j=1}^g  (M\cdotb C)_j  D_{\Lambda}^{-2} (M\cdotb C)_j^*     & 0\\
  0 & 0 \end{pmatrix} \sG^*.
\]
Since $C$ is $k\times \ell$ and  \bmin, $\ker((M\cdot C)^*) =\{0\}$ and $D_{\Lambda}^{-2}$
 is positive definite, $\rk \sum_{j=1}^g  (M\cdotb C)_j  D_{\Lambda}^{-2} (M\cdotb C)_j^*=k.$
 Hence $d=\rk \sum_{j=1}^g A_j A_j^* =k.$ Thus $E$ and $C$ have the same size $d\times e$.

Since $E$ and $C$ are both $d\times e$ and $r=d+e,$ the matrices $V$ and $W$ decompose as
\[
 V=  
 \begin{pmatrix} V_{11} & V_{12}\\V_{21} & V_{22} \end{pmatrix},
 \quad \quad W=  
 \begin{pmatrix} W_{11} & W_{12}\\W_{21} & W_{22} \end{pmatrix}
\]
with respect to the decomposition $\C^r = \C^d\oplus \C^e$. In particular,
$V_{jj}$ and $W_{jj}$ are all square.
Comparing equation \eqref{e:F=WhatEV} and equation \eqref{d:Fj} gives
\begin{equation}
\label{e:compareFs}
 \begin{pmatrix} W_{11} E_j V_{12}^* & W_{11} E_j V_{22}^* \\
           W_{21}E_j V_{12}^* & W_{21} E_j V_{22}^* \end{pmatrix}
   = \begin{pmatrix} 0 & (M\cdotb C)_j D_\Lambda^{-1} \\ 
          0& - D_{\Lambda}^{-1} \Lambda^* (M\cdotb C)_j D_\Lambda^{-1} \end{pmatrix}.
\end{equation}
Multiplying both sides of equation \eqref{e:compareFs} by 
$\begin{pmatrix} W_{11}^* & W_{21}^*\end{pmatrix}$ and using the fact that $W$ 
 is unitary shows,
\[
 E_j V_{12}^* =0.
\]
Since $E$ is \bmin and $\sum E_j^* E_j V_{12}^* =0$ we conclude that $V_{12}=0$.
Since $V$ is unitary, $V_{22}$ is isometric and since $V_{22}$ is square
($e\times e$) it is unitary  (and thus $V_{21}=0$).  Further,
\begin{equation}
\label{e:WEV}
\begin{split}
   W_{11} E_j V_{22}^* & =  (M\cdotb C)_j D_\Lambda^{-1} \\
   W_{21} E_j V_{22}^* & =  - D_\Lambda^{-1} \Lambda^* (M\cdotb C)_j D_\Lambda^{-1}.
\end{split}
\end{equation}
Thus, $W_{21} E_j V_{22}^* =  - D_\Lambda^{-1} \Lambda^* W_{11} E_j V_{22}^*$
and hence $W_{21} E_j =  - D_\Lambda^{-1} \Lambda^* W_{11} E_j.$
It follows that
\[
 W_{21} \sum E_j E_j^* =  -D_\Lambda^{-1} \Lambda^* W_{11} \sum E_j E_j^*.
\]
Thus, again using that $E$ is \bmin (so that $\ker(E^*)=\{0\}$), 
\[
 W_{21} = -D_\Lambda^{-1}\Lambda^* W_{11}.
\]
Hence,
\[
 I = W_{11}^* W_{11} + W_{21}^* W_{21} = W_{11}^*[I+\Lambda D_{\Lambda}^{-2} \Lambda^*] W_{11}
  = W_{11}^* D_{\Lambda^*}^{-2} W_{11}
\]
and, since $W_{11}$ is $d\times d$, we conclude that it  is invertible and 
\[
 W_{11} W_{11}^* = D_{\Lambda^*}^2.
\]
Consequently there is a $d\times d$ unitary $\sW$ such that
\begin{equation}
\label{e:W11W21}
\begin{split}
W_{11} & =   D_{\Lambda^*} \sW \\
 W_{21} & =  -D_\Lambda^{-1} \Lambda^* D_{\Lambda^*} \sW = - \Lambda^* \sW.
\end{split}
\end{equation}
Combining the first bits of each of equations \eqref{e:WEV} and  \eqref{e:W11W21}
 and setting $\sV=V_{22}$
gives  Corollary \ref{t:B2B}\eqref{i:CME}. Namely,
\begin{equation*}
(M\cdotb C)_j =  D_{\Lambda^*} \sW E_j \sV^* D_\Lambda.
\end{equation*}

Observe (using $E$ and $C$ have the same size) that,
\begin{equation*}
 \AF = W\begin{pmatrix} E & 0\\ 0 & 0 \end{pmatrix} \,
  \begin{pmatrix} 0_{e\times d} &I_e \\ I_d & 0_{d\times e} \end{pmatrix} V^*.
\end{equation*}
The tuple  $\AF$ is, up to unitary equivalence, of the form of equation \eqref{e:UEE}
where
\[
 U=  \begin{pmatrix} 0 & V_{22}^* \\ V_{11}^* & 0\end{pmatrix}\, 
     \begin{pmatrix} W_{11} & W_{12}\\ W_{21} & W_{22} \end{pmatrix}
   = \begin{pmatrix} \sV^* W_{21} & * \\ * & * \end{pmatrix}.
\] 
Thus, $U_{11} = \sV^* W_{21}=-\sV^* \Lambda^* \sW$.  Since
the pair $(A,\Xi)$ satisfies equation \eqref{e:JgetsXi}, 
\[
 \begin{pmatrix} E_k & 0 \\ 0 & 0 \end{pmatrix} \, U \,
 \begin{pmatrix} E_j & 0 \\ 0 & 0 \end{pmatrix} =
 \sum_s (\Xi_j)_{k,s} \begin{pmatrix} E_s & 0 \\ 0 & 0 \end{pmatrix},
\]
 item \eqref{i:ELCE} holds.

To prove the converse, 
suppose $E,C\in M_{d\times e}(\C)^g$ and $b\in \cB_C(1)$ are given
and  there exists an invertible $M\in M_g(\C)$, a convexotonic tuple
$\Xi \in M_g(\C)^g$ and  unitaries $\sW$ and $\sV$ 
such that items \eqref{i:ELCE} and \eqref{i:CME} of Corollary \ref{t:B2B} 
hold. Let $\Lambda =\Lambda_C(b)$ and define $\sG$ and  $\AF$ as in equations \eqref{d:sG} and \eqref{d:Fj} respectively.
The map $\lambda(x)=x\cdotb M + b$ is again an affine linear bijection from $\cD_\AF$ to
 $\cB_C.$

Define $W_{11}$ and $W_{21}$ by equation \eqref{e:W11W21}.  It follows that
 $W_{11} W_{11}^* + W_{21} W_{21}^*=I$. Choose $W_{12}$ and $W_{22}$ such that
 $W=(W_{ij})_{i,j=1}^2$ is a (block) unitary matrix. Let $V_{22}=\sV$ and take
any unitary $V_{11}$ (of the appropriate size) and set
\[
 V = \begin{pmatrix} V_{11} & 0 \\0 & V_{22}\end{pmatrix}.
\]

Next,  using item \eqref{i:CME}, the definitions of $W_{11}$ and $W_{12}$
and $D_\Lambda^{-1}\Lambda^* D_{\Lambda^*} = \Lambda^*,$ 
\begin{equation*}
\begin{split}
\AF_k & = \sG^* \begin{pmatrix} 0 & (M\cdotb C)_k \\ 0 & 0 \end{pmatrix} \sG =  \begin{pmatrix} 0 & (M\cdotb C)_k D_\Lambda^{-1} \\ 
       0 & - D_\Lambda^{-1} \Lambda^* (M\cdot C)_k D_\Lambda^{-1} \end{pmatrix} \\
 &=  \begin{pmatrix} 0 & D_{\Lambda^*} \sW E_k \sV^*  \\
     0 & -\Lambda^* \sW E_k \sV^* \end{pmatrix} 
=  \begin{pmatrix} 0 & W_{11} E_k \sV^* \\ 0 & W_{21}E_k \sV^*\end{pmatrix}. 
\end{split}
\end{equation*}
Thus, using item \eqref{i:ELCE}, 
\[
\begin{split}
 \AF_j \AF_k  &=  \begin{pmatrix} 0 & W_{11} E_j \sV^* W_{21} E_k \sV^* \\
                  0 & W_{21} E_j \sV^* W_{21} E_k \sV^* \end{pmatrix} 
  =  \sum_s (\Xi_k)_{j,s} \begin{pmatrix} 0 & W_{11} E_s \sV^* \\ 
                      0 & W_{21} E_s \sV^* \end{pmatrix}
  =  \sum_s (\Xi_k)_{j,s} \AF_s.
\end{split}
\]
Thus $\AF$ spans an algebra with multiplication table given by $\Xi$. 
Consequently $\psif(x) = x(I-\Lambda_\Xi(x))^{-1}$   is convexotonic
from $\interior(\cB_\AF)$ to $\interior(\cD_\AF)$ by 
 Proposition \ref{prop:properobvious} .  On the other hand,
$\cB_\AF=\cB_E$, since 
\[
 \AF_j^* \AF_k = \begin{pmatrix} 0 & 0 \\ 0 & \sV E_j^* E_k \sV^* \end{pmatrix}
\]
(because $W_{11}^* W_{11}+W_{21}^* W_{21}=I$).
 Thus $\psif$ is convexotonic from $\interior(\cB_E)$ to $\interior(\cD_\AF).$ 
Finally, $\varphi =\lambda \circ \psif$ is convexotonic
from $\interior(\cB_E)$ to $\interior(\cB_C)$ with $\varphi(0)=b$ and
$\varphi^\prime(0)= M$.

The uniqueness is well known. Indeed, if $\varphi$ and $\zeta$ are both
 bianalytic from $\cB_E\to \cB_C$, send 
 $0$ to $b$ and have the same derivative at $0$, then $\psif=\varphi\circ \zeta^{-1}$ 
 is an analytic automorphism of $\cB_C$ sending $0$ to $0$ and having derivative
 the identity at $0$. Since $\cB_C$ is {\it circular}, the free version of Cartan's
 Theorem \cite{HKM11b} says $\psif(x)=x$ and hence $\zeta=\varphi$.\hfill\qedsymbol

\section{Convex sets defined by rational functions}
In this section we employ a variant of the main result of \cite{HM14} 
to extend  Theorem \ref{t:pencil-ball-alt}  to cover
birational maps from a matrix convex set to a spectraball.
A free set is \df{matrix convex}
if it is closed with respect to isometric conjugation.
We refer the reader to \cite{EW,HKMjems,Kr,FHL18,PSS} 
for the theory of matrix convex sets.
For expository convenience, by free rational mapping $p:M(\C)^g\to M(\C)^g$ 
we mean $p=\begin{pmatrix} p^1 & p^2 &\dots p^g\end{pmatrix}$ where
each $p^j=p^j(x)$ is a free rational function (in the $g$-variables 
$x=(x_1,\dots,x_g)$) regular at $0.$    Theorem \ref{t:rat2} immediately below is the main
result of this section. It is followed up by two corollaries.

\begin{thm}
\label{t:rat2}
 Suppose $\mfq:M(\C)^g\to M(\C)^g$ is a free rational mapping, $\mathscr{C}\subset M(\C)^g$
 is a  bounded  open matrix convex set containing the origin and $E\in M_{d\times e}(\C)^g$.
 If $E$ is linearly independent, 
 $\mathscr{C}\subset \dom(\mfq)$ and $\mfq:\mathscr{C}\to \interior(\cB_E)$ is bianalytic, then
 there exists an $r\le d+e$ and a tuple 
$A\in M_r(\C)^g$ such that $\mathscr{C}=\interior(\cD_A)$ and
$\mfq$ is, up to affine linear equivalence,  convexotonic.
\end{thm}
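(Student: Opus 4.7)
The plan is to reduce Theorem~\ref{t:rat2} to Theorem~\ref{t:pencil-ball-alt}, the main obstacle being to produce a pencil $A$ with $\mathscr{C}=\interior(\cD_A)$. The overall strategy has three steps: (i) show $\mathscr{C}$ is (the interior of) a free spectrahedron; (ii) normalize the inverse mapping $\mfp:=\mfq^{-1}:\interior(\cB_E)\to\mathscr{C}$ so that $\mfp(0)=0$ and $\mfp'(0)=I_g$ by composing with a suitable affine linear bijection; (iii) invoke Theorem~\ref{t:pencil-ball-alt} to finish.

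For step (i) I would adapt the LMI representation construction from \cite{HM14} to the rational setting (this is the ``variant'' alluded to in the section's opening sentence). Since $\mfq$ is free rational and $\mathscr{C}\subset\dom(\mfq)$ is bounded with $\mfq(\mathscr{C})=\interior(\cB_E)$ bianalytically, properness of $\mfq$ (inverse images of compacta in $\interior(\cB_E)$ are compact in $\mathscr{C}$) forces the continuous extension of $\mfq$ to satisfy $\mfq(\partial\mathscr{C})\subseteq\partial\cB_E$. Hence the pullback $\Mre_E(\mfq(x))=I-\Lambda_E(\mfq(x))^*\Lambda_E(\mfq(x))$ is a free rational hermitian matrix-valued function whose positivity locus on $\dom(\mfq)$ is exactly $\mathscr{C}$. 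Starting from a minimal Fornasini--Marchesini-type realization of $\mfq$, clearing denominators via a standard Schur complement produces a hermitian monic pencil $L_A$ with $\cD_A\cap\dom(\mfq)=\overline{\mathscr{C}}\cap\dom(\mfq)$; matrix convexity of $\mathscr{C}$ then upgrades this to $\mathscr{C}=\interior(\cD_A)$, and replacing $A$ by a minimal summand (Lemma~\ref{l:Z+}) allows us to assume $L_A$ is minimal for $\cD_A$.

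Step (ii) is routine. By Proposition~\ref{prop:aff}, $\mathscr{C}$ remains (the interior of) a free spectrahedron under affine linear bijections, with minimality preserved, so after replacing $\mfp$ by $x\mapsto (\mfp(x)-\mfp(0))\cdotb (\mfp'(0))^{-1}$ we obtain a free bianalytic $f:\interior(\cB_E)\to\interior(\cD_B)$ with $f(0)=0$, $f'(0)=I_g$, and $B$ minimal for $\cD_B$. Theorem~\ref{t:pencil-ball-alt} then yields that $f$ is the convexotonic map associated to a convexotonic tuple $\Xi\in M_g(\C)^g$, that $B$ is unitarily equivalent to $U\bigl(\begin{smallmatrix} E & 0\\ 0& 0\end{smallmatrix}\bigr)$ for some $r\times r$ unitary $U$, and that $r\le d+e$. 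Unwinding the affine linear change of variables shows that $\mfq=f^{-1}$ is convexotonic up to affine linear equivalence, and transports the size bound back to the original $A$.

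The hard part is step (i). The original \cite{HM14} theorem is tailored to free convex semialgebraic sets with a polynomial defining inequality, whereas here $\mathscr{C}$ is only a priori defined by a rational inequality obtained by pulling back through $\mfq$. The technical work will be in verifying that the free rational hermitian expression $\Mre_E\circ\mfq$, after being cleared of denominators by means of a realization, yields a genuine hermitian monic pencil whose positivity locus on \emph{all} of $M(\C)^g$ (not just on $\dom(\mfq)$) coincides with $\mathscr{C}$; boundedness of $\mathscr{C}$ and of $\cB_E$ should rule out any spurious extra component appearing outside $\dom(\mfq)$. Once this is done, the rest of the argument is bookkeeping on top of Theorem~\ref{t:pencil-ball-alt} and Proposition~\ref{prop:aff}.
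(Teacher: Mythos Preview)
Your overall architecture is correct and matches the paper: once you know $\mathscr{C}=\interior(\cD_A)$ for some minimal $A$, the reduction to Theorem~\ref{t:pencil-ball-alt} via an affine linear normalization (your steps (ii)--(iii)) is exactly what the paper does in its last paragraph. The size bound $r\le d+e$ and the convexotonic form of $\mfq$ follow immediately.

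The substantive issue is step (i), as you yourself flag. Your proposed mechanism---clear denominators in $\Mre_E\circ\mfq$ via a Schur complement to land directly on a hermitian monic pencil whose positivity set is $\mathscr{C}$---is not how the paper proceeds, and it is not clear it would work as stated. The difficulty is precisely the one you identify: you only know the positivity set agrees with $\mathscr{C}$ on $\dom(\mfq)$, and ``boundedness should rule out spurious components'' is not an argument. The paper's device (Proposition~\ref{p:convexrat}) is different and worth noting: take symmetric minimal realizations of \emph{both} the rational function $\mfs(x,y)=\Mre_E(\mfq(x),\wc{\mfq}(y))$ and its inverse $\mfs^{-1}$, with resolvent pencils $L_{J,T}$ and $L_{\tJ,\tT}$; then the connected component of $0$ in the invertibility locus of $P(x,x^*)=Q(x)+Q(x)^*$ (with $Q$ the direct sum of the two half-pencils) is shown to coincide with $S^0=\mathscr{C}$. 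The inverse $\mfs^{-1}$ is what prevents a boundary point of $\mathscr{C}$ from lying in the invertibility locus, since $\mfs$ is singular there. At that point one invokes \cite{HM12} (not \cite{HM14}): a bounded open matrix convex set that is the component of $0$ of the invertibility locus of a hermitian pencil is the interior of a free spectrahedron. So the LMI representation is obtained indirectly, via an invertibility characterization plus \cite{HM12}, rather than by producing a monic pencil directly from a Schur complement.
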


\begin{cor}
 \label{c:rat1}
   Suppose $\mfp:M(\C)^g\to M(\C)^g$ is a free rational mapping,  $E\in M_{d\times e}(\C)^g$
 is linearly independent and let
\[
  \mathscr{C} := \{X: X\in \dom(\mfp), \, \|\Lambda_E(\mfp(X))\|<1\}.
\]
Assume $\mathscr{C}$ is bounded,  convex and contains $0$. If $X_k\in \mathscr{C}(n)$ and the sequence $(X_k)_k$ converges to $X\in\partial \mathscr{C}$
 implies $\lim_{k\to\infty}\|\Lambda_E(\mfp(X_k))\|=1$,  
then there exists an $r\le d+e$ and a tuple 
$A\in M_r(\C)^g$ such that $\mathscr{C}=\interior(\cD_A)$ and
$\mfp:\interior(\cD_A)\to \interior(\cB_E)$ is bianalytic and,
up to affine linear equivalence,  convexotonic. 
\end{cor}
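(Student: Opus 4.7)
The plan is to derive the corollary from Theorem~\ref{t:rat2} by verifying its hypotheses. First, $\mathscr{C}=\dom(\mfp)\cap \mfp^{-1}(\{Y:\|\Lambda_E(Y)\|<1\})$ is open, being the intersection of an open set with the preimage of an open set under the continuous rational map $\mfp$. As $\mfp$ respects direct sums and simultaneous unitary conjugation, so does $\mathscr{C}$, making it a free set. Together with the levelwise convexity hypothesis, and the observation noted in the excerpt that in this setting matrix convexity coincides with levelwise convexity, this yields matrix convexity of $\mathscr{C}$.

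Next I would show $\mfp:\mathscr{C}\to \interior(\cB_E)$ is proper. Given a compact $K\subset \interior(\cB_E)(n)$, choose $\delta>0$ with $\|\Lambda_E(Y)\|\leq 1-\delta$ for all $Y\in K$. Any sequence $(X_k)$ in $\mfp_n^{-1}(K)$ is bounded, since $\mathscr{C}$ is, so after passing to a subsequence it converges to some $X\in \overline{\mathscr{C}(n)}$. The boundary hypothesis forbids $X\in \partial \mathscr{C}$, as this would force $\|\Lambda_E(\mfp(X_k))\|\to 1$, contradicting $\mfp(X_k)\in K$. Hence $X\in \mathscr{C}$, continuity gives $\mfp(X)\in K$, and $\mfp_n^{-1}(K)$ is compact.

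The main obstacle is to upgrade $\mfp$ from proper to bianalytic, which is needed before Theorem~\ref{t:rat2} applies. At each level $n$, $\mfp_n$ is a proper holomorphic map between equidimensional bounded open sets in $M_n(\C)^g$, hence a finite branched cover of the connected target $\interior(\cB_E)(n)$; the task is to show it has degree one. My approach would be to exploit the free power-series expansion of $\mfp$ at $0$ together with rationality: evaluate $\mfp$ on nilpotent tuples of the form $S\otimes Y$ (the trick used in Lemma~\ref{lem:useproper}) to control the higher-order coefficients of $\mfp$ via its first-order data, producing an explicit rational candidate for the inverse, which properness then globalizes to a genuine inverse. Once bianaliticity is established, Theorem~\ref{t:rat2} applies directly to give $\mathscr{C}=\interior(\cD_A)$ for some $A\in M_r(\C)^g$ with $r\leq d+e$, together with $\mfp$ being convexotonic up to affine linear equivalence.
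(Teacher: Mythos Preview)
Your overall strategy---show $\mfp$ is proper, upgrade to bianalytic, then invoke Theorem~\ref{t:rat2}---is exactly the paper's route. Your verification that $\mathscr{C}$ is open, free, and (via levelwise convexity plus closure under direct sums) matrix convex, and your properness argument, are correct and more explicit than the paper, which simply records ``By assumption $\mfp:\mathscr{C}\to \interior(\cB_E)$ is a proper map.''

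The gap is in your treatment of the step from proper to bianalytic. You rightly flag this as the main obstacle, but the sketch you offer does not carry weight: evaluating on nilpotent tuples $S\otimes Y$ exposes the homogeneous components of $\mfp$, but gives no mechanism for ``controlling higher-order coefficients via first-order data'' or for manufacturing a rational inverse; and the branched-cover/degree-one heuristic is a classical picture that does not transfer cleanly to the free setting. The paper dispatches this step in a single citation to \cite[Theorem~3.1]{HKM11b}, which shows that a proper free analytic map into a bounded (circular) free domain such as $\interior(\cB_E)$ is automatically bianalytic. The argument there is genuinely free-analytic: injectivity is forced by evaluating on direct sums $X\oplus Y$ and using that free maps respect intertwinings, a phenomenon with no classical counterpart. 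With that result in hand the corollary is immediate from Theorem~\ref{t:rat2}; your proposed alternative route to bianalyticity would need substantially more detail to stand on its own.
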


\begin{proof}
 By assumption $\mfp:\mathscr{C}\to \interior(\cB_E)$ is a proper map.
 By \cite[Theorem 3.1]{HKM11b}, $\mfp$ is bianalytic.  Hence
 Corollary \ref{c:rat1} follows from Theorem \ref{t:rat2}.
\end{proof}

\begin{cor}
\label{c:rat3}
  Suppose $p:M(\C)^g\to M(\C)^g$ is a free polynomial mapping,  $E\in M_{d\times e}(\C)^g$
 is linearly independent and let
\[
  \mathscr{C} := \{X: \|\Lambda_E(p(X))\|<1\}.
\]
If $\mathscr{C}$ is bounded,  convex and contains $0,$
then there exists an $r\le d+e$ and a tuple 
$A\in M_r(\C)^g$ such that $\mathscr{C}=\interior(\cD_A)$ and
$p:\interior(\cD_A)\to \interior(\cB_E)$ is bianalytic and,
 up to affine linear equivalence,  convexotonic. 
\end{cor}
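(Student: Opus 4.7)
The plan is to obtain Corollary \ref{c:rat3} as an immediate consequence of Corollary \ref{c:rat1}: since a free polynomial is regular everywhere, the sequential boundary condition in Corollary \ref{c:rat1} becomes automatic from continuity, and all other hypotheses are immediate.

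First I would observe that, because $p$ is a polynomial, $\dom(p) = M(\C)^g$, and in particular $\mathscr{C} \subseteq \dom(p)$. The set $\mathscr{C}$ is bounded, convex and contains $0$ by hypothesis, and is open as the preimage of $[0,1)$ under the continuous map $X \mapsto \|\Lambda_E(p(X))\|$. Thus the free set assumptions of Corollary \ref{c:rat1} are satisfied, matching the definition of $\mathscr{C}$ there verbatim once $\dom(\mfp)$ is replaced by $M(\C)^g$.

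The only nontrivial point is the sequential boundary condition: given $X_k \in \mathscr{C}(n)$ with $X_k \to X \in \partial\mathscr{C}(n)$, we must show $\lim_k \|\Lambda_E(p(X_k))\| = 1$. Continuity of $p$ on all of $M_n(\C)^g$ gives $\|\Lambda_E(p(X_k))\| \to \|\Lambda_E(p(X))\|$. Since $\|\Lambda_E(p(X_k))\| < 1$ for every $k$, the limit is at most $1$; conversely, $X \in \partial\mathscr{C}$ forces $X \notin \mathscr{C}$ (as $\mathscr{C}$ is open), whence $\|\Lambda_E(p(X))\| \ge 1$. Equality follows. This step is precisely where the polynomial hypothesis is used decisively: for a general rational $p$, a boundary point of $\mathscr{C}$ could lie outside $\dom(p)$ and the continuity argument would fail, which is exactly why this condition had to be hypothesized separately in Corollary \ref{c:rat1}.

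Having verified the hypotheses, a direct application of Corollary \ref{c:rat1} delivers an integer $r \le d+e$ and a tuple $A \in M_r(\C)^g$ with $\mathscr{C} = \interior(\cD_A)$ and $p : \interior(\cD_A) \to \interior(\cB_E)$ bianalytic and, up to affine linear equivalence, convexotonic. There is no substantive obstacle beyond the elementary continuity check above; the real work was already carried out in Theorem \ref{t:rat2} and Corollary \ref{c:rat1}.
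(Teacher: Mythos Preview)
Your proposal is correct and follows the same overall route as the paper: reduce to Corollary~\ref{c:rat1} by verifying its sequential boundary hypothesis. Your verification is in fact slightly more direct than the paper's: you use only that $\mathscr{C}$ is open and defined by $\|\Lambda_E(p(X))\|<1$, so $X\in\partial\mathscr{C}$ immediately forces $\|\Lambda_E(p(X))\|\ge 1$ (and continuity gives $\le 1$); the paper instead argues by contradiction using convexity of $\mathscr{C}$ along the segment from $0$ through $X$ to reach the same conclusion.
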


\begin{proof}
By hypothesis $p:\mathscr{C}\to \interior(\cB_E).$ 
Let $X\in \partial \mathscr{C}$ be given. By convexity and continuity 
 $p(tX)\in \interior(\cB_E)$ for $0\le t < 1$ 
and $p(X)\in\cB_E$. If $p(X)\in \interior(\cB_E)$, then there exists $t_*>1$ such
$p(t_*X)\in\interior(\cB_E).$ But then $0,\, t_*X\in\mathscr{C}$ and 
$X\notin\mathscr{C}$, violating convexity of $\mathscr{C}$. Hence
 $p(X)\in\partial \cB_E$ and consequently $p$ is a proper map. 
Thus Corollary \ref{c:rat3} follows from Corollary \ref{c:rat1}.
\end{proof}

The proof of Theorem \ref{t:rat2} given here depends on two preliminary results.
Let $\rxy$ denote the  \index{$\rxy$}
\df{skew field of free rational functions}
in the
freely noncommuting variables  
$(x,y)=(x_1,\dots,x_g,y_1,\dots,y_g)$.
There is an involution $\widecheck{}$ on $\rxy$ determined by
$\wc{x_j}=y_j$.
A $p\in \rxy$ is \df{symmetric} if $\wc{p}=p$.
An important feature of the involution is the fact that, if
$p\in \rxy$ and $(X,X^*)\in \dom(p)$, then $\wc{p}(X,X^*)=p(X,X^*)^*$
and $p$ is symmetric if and only if $\wc{p}(X,X^*)=p(X,X^*)$ for all
$(X,X^*)\in \dom(p)\cap \dom(\wc{p})$.
These notions naturally extend to matrices over $\rxy$.

Proposition \ref{p:convexrat} below is  
a variant of the main result of \cite{HM14}. Taking advantage of recent advances
in our understanding of the singularities of free rational functions (e.g., \cite{Vol17}), the proof given
here is rather short, compared to that of the similar result in \cite{HM14}.

\begin{prop}
\label{p:convexrat}
 Suppose $\mfs(x,y)$ is a  $\mu\times \mu$ symmetric matrix-valued free rational function in 
the $2g$-variables $(x_1,\dots,x_g,y_1,\dots,y_g)$ that is regular at $0.$  Let
\[
 S= \{X\in M(\C)^g: (X,X^*) \in \dom(\mfs), \, \mfs(X,X^*) \succ 0\},
\]
let $S^0$ denote the (level-wise) connected component of $0$ of $S,$
and assume $S^0(1)\neq \emptyset.$
If  each $S^0(n)$ is
convex, then there is a positive integer $\NN$ and a tuple $A\in M_\NN(\C)^g$ such that
$S^0=\interior(\cD_A)$.
\end{prop}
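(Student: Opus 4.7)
The plan is to exhibit a hermitian monic pencil $L_A$ whose Schur complement equals $\mfs$, and then to identify $S^0$ directly with $\interior(\cD_A)$. This bypasses a direct invocation of the polynomial case of \cite{HM14}.

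Since $0 \in S^0$, we have $\mfs(0) \succ 0$, and replacing $\mfs$ by $\mfs(0)^{-1/2} \mfs(x,y) \mfs(0)^{-1/2}$ (which does not alter $S^0$), I may assume $\mfs(0) = I_\mu$. Using the descriptor realization theory for symmetric matrix-valued free rational functions (cf.~\cite{KVV09,Vol17,HMV06}), I then construct a positive integer $k$ and a tuple $A \in M_{\mu+k}(\C)^g$ such that the hermitian monic pencil $L_A(x,y) = I + \Lambda_A(x) + \Lambda_{A^*}(y)$ decomposes blockwise as
\[
L_A(x,y) = \begin{pmatrix} \mathcal{A}(x,y) & \mathcal{B}(x,y)^* \\ \mathcal{B}(x,y) & \mathcal{D}(x,y) \end{pmatrix},
\]
with $\mathcal{D}(0) = I_k$, and so that as rational expressions
\[
\mfs(x,y) = \mathcal{A}(x,y) - \mathcal{B}(x,y)^* \mathcal{D}(x,y)^{-1} \mathcal{B}(x,y).
\]
By taking the realization to be minimal, the connected component of $0$ in $\dom(\mfs)$ coincides with the set where $\mathcal{D}$ is invertible.

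Set $\NN := \mu + k$ and $T := \interior(\cD_A) = \{X : L_A(X,X^*) \succ 0\}$. This is a convex, hence connected, free spectrahedron containing $0$. By the Schur complement formula, $L_A(X,X^*) \succ 0$ iff $\mathcal{D}(X,X^*) \succ 0$ and $\mfs(X,X^*) \succ 0$; in particular $T \subseteq S$, and by connectedness $T \subseteq S^0$. Conversely, fix $X \in S^0(n)$. Level-wise convexity of $S^0$ forces $\{tX : t \in [0,1]\} \subseteq S^0 \subseteq \dom(\mfs)$, so $\mathcal{D}(tX, tX^*)$ is a continuous path of invertible hermitian matrices starting at $\mathcal{D}(0) = I \succ 0$. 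Since the positive definite cone is a clopen subset of the invertible hermitian matrices, $\mathcal{D}(tX,tX^*) \succ 0$ throughout $t \in [0,1]$. Coupled with $\mfs(tX,tX^*) \succ 0$, the Schur complement then yields $L_A(X,X^*) \succ 0$, i.e., $X \in T$. Therefore $S^0 = T = \interior(\cD_A)$.

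The main obstacle is producing the hermitian monic pencil $L_A$ with the required Schur complement structure. Any minimal symmetric realization of $\mfs$ furnishes a symmetric denominator pencil together with numerator data, but arranging these into a single hermitian pencil with constant term $I$ so that $\mfs$ appears literally as the $(1,1)$-Schur complement requires care. One approach is to start from a symmetric Fornasini--Marchesini realization and apply a signature-preserving normalization; another is to perform a symmetric factorization of a constructed block pencil. Once this realization is in place, the convexity hypothesis enters cleanly through the clopenness argument above, ruling out the denominator $\mathcal{D}$ from becoming indefinite anywhere on $S^0$.
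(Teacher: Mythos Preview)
Your argument is clean once the realization is in hand, but the realization step is a genuine gap, not a technicality. The symmetric minimal realization provided by \cite{HMV} has the form $\mfs(x,y)=D+C^*\big(J-\Lambda_T(x)-\Lambda_{T^*}(y)\big)^{-1}C$ where $J$ is a \emph{signature} matrix; in general $J$ is indefinite, and its signature is an invariant of the minimal realization. Naively packaging this as a $2\times 2$ block pencil produces a hermitian pencil whose constant term has the signature of $\left(\begin{smallmatrix}D & C^*\\ C & -J\end{smallmatrix}\right)$, which is not positive definite, so it cannot be normalized to a \emph{monic} pencil. Conversely, if a monic hermitian $L_A$ with your Schur-complement property and the domain equality $\dom(\mfs)=\{\mathcal D\text{ invertible}\}$ always existed, then (independently of the convexity hypothesis!) the connected component of $0$ in $\{X:(X,X^*)\in\dom(\mfs)\}$ would always be $\interior(\cD_\delta)$ for some $\delta$; this is much stronger than what the standard realization theory gives, and you have not argued it. The two fixes you sketch (``signature-preserving normalization'', ``symmetric factorization'') do not address the obstruction: congruence preserves signature, so you cannot turn the indefinite $J$ into $I$ without either enlarging the realization (losing the domain equality you need for $S^0\subseteq T$) or invoking something of the strength of \cite{HM12}.

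The paper's proof sidesteps this by taking symmetric minimal realizations of \emph{both} $\mfs$ and $\tilde\mfs:=\mfs^{-1}$, forming the direct sum pencil $P(x,x^*)=\big(\tfrac{J}{2}-\Lambda_T\big)\oplus\big(\tfrac{\tilde J}{2}-\Lambda_{\tilde T}\big)+\text{adjoint}$, and proving that $S^0$ coincides with the connected component of $0$ of $\{X:\det P(X,X^*)\neq 0\}$. The point is that on $S^0$ one has $\mfs\succ 0$, hence $\mfs$ is invertible, hence $(X,X^*)\in\dom(\mfs)\cap\dom(\tilde\mfs)$, giving invertibility of $P$; convexity is used (as in your argument) to push the segment $tX$ through. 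Only then is the deep result \cite{HM12} invoked to conclude that a convex connected component of an invertibility set of a hermitian pencil is a free spectrahedron interior. Your approach, if it worked, would bypass \cite{HM12}; that is exactly why the realization you posit is not available for free.
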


\begin{proof}
 From \cite{KVV09,Vol17} the free rational function $\mfs$ has an observable and 
 controllable realization. By \cite{HMV}, since $\mfs$ is symmetric, this
realization can be symmetrized. Hence, 
 there exists a positive integer $t$, a tuple $T\in M_t(\C)^g,$ 
 a signature matrix $J\in M_t(\C)$ (thus $J=J^*$ and $J^2=I$) and matrices
 $D$ and $C$ of sizes $\mu\times \mu$ and $t\times \mu$ respectively such that
\[
 \mfs(x,y) = D + C^* L_{J,T}(x,y)^{-1}C
\]
and $\dom(\mfs) = \{(X,Y): \det(L_{J,T}(X,Y))\ne 0\},$ where
\[
 L_{J,T}(x,y) = J-\Lambda_T(x)-\Lambda_{T^*}(y) = J-\sum T_j x_j - \sum T_j^* y_j.
\]
 Let
$\tmfs(x,y)=\mfs(x,y)^{-1}$.  Thus $\tmfs(x,y)$ is also a
$\mu\times \mu$ symmetric matrix-valued free rational function.  It has a representation,
\[
 \tmfs(x,y) = \tilde{D} + \tilde{C}^* L_{\tilde{J},\tilde{T}}(x,y)^{-1} \tilde{C},
\]
with $\dom(\tmfs) =\{(X,Y): \det(L_{\tJ,\tT}(X,Y))\ne 0\}.$  Let
\[
 Q(x) = \Big(\frac{J}{2} -\Lambda_T(x)\Big) \oplus \Big(\frac{\tJ}{2}-\Lambda_{\tT}(x)\Big),
\]
 let   $P(x,x^*)=Q(x)+Q(x)^*$, 
let $\sI =\{X:\det(P(X))\ne 0\}$ and let $\sI^0$ denote its connected
component of $0$. Observe that 
$\{(X,X^*): X\in \sI\} = \{X:(X,X^*)\in \dom(\mfs) \cap \dom(\tmfs)\}.$ In particular,
if $X\in \sI^0$, then $(X,X^*)\in\dom(\mfs)\cap\dom(\tmfs)$.
On the other hand, if  $(X,X^*)\in \dom(\mfs)$ and $\mfs(X,X^*)\succ 0$,
then $\mfs(X,X^*)$ is invertible and hence $(X,X^*) \in \dom(\tmfs)$. 
Hence, if $X\in S^0$, then $(X,X^*)\in \dom(\mfs)\cap\dom(\tmfs)$ too.

 Suppose $X\in S^0$. Thus $tX\in S^0$ for $0\le t\le 1$ by
convexity. It follows that $t(X,X^*)\in \dom(\mfs)\cap \dom(\tmfs)$. Hence
$tX\in \sI$ for $0\le t\le 1$. Thus $X\in \sI^0$ and $S^0\subset \sI^0$.

Arguing by contradiction, suppose there exists $X\in \sI^0\setminus S^0$.  It follows that
there is a (continuous) path $F$ in $\sI^0$ such that $F(0)=0$ and $F(1)=X$.
There is a smallest $0<\alpha \le 1$ with the property $Y=F(\alpha)$ is in the boundary of $S^0$.
Since $Y\in \sI^0$, $(Y,Y^*)\in \dom(\mfs).$ Since  $Y\notin S^0$, 
 $\mfs(Y,Y^*)\succeq 0$ is not invertible.
It follows that $Y\in \sI^0$, but $(Y,Y^*)\notin  \dom(\tmfs)$,  a contradiction.
Hence $\sI^0=S^0$ is the component of the origin of the set of $X\in M(\C)^g$
such that $P(X)$ is invertible. 
  By a variant of the main result in \cite{HM12}, $S^0$ is the interior of a free spectrahedron.
\end{proof}

\begin{lemma}
\label{lem:Emfq}
 If  $\mfq:M(\C)^g\to M(\C)^g$ 
is a free rational mapping
 and $E\in M_{d\times e}(\C)^g$ is linearly independent,
 then 
\begin{enumerate}[\rm(1)]
 \item \label{i:Emfq1}
    the domains of $\mfq$ and $\mfQ(x) := \Lambda_E(\mfq(x))$ coincide; 
 \item \label{i:Emfq2}
    $\dom(\wc{\mfq}) =\dom(\mfq)^*:=\{X: X^* \in \dom(\mfq)\}$;   and 
 \item \label{i:Emfq3}
  the domain of
\begin{equation}
\label{e:Emfq}
\mfr(x,y):= \begin{pmatrix} I_{d\times d}  & Q(x)\\ 
      \wc{Q}(y) & I_{e\times e}\end{pmatrix}
\end{equation}
is $\dom(\mfq)\times \dom(\mfq)^*= \{(X,Y): X,Y^* \in \dom(\mfq)\}$.
\end{enumerate}
\end{lemma}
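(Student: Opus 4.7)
The overall plan is to dispatch the three parts in order, leaning on the description of the domain of a free rational function via any minimal realization \cite{KVV09,Vol17} together with the domain calculus from \cite[Theorem~3.10]{Vol18}. For part \eqref{i:Emfq1}, one inclusion is immediate: $\Lambda_E$ is a polynomial map, so composition with it can only enlarge the domain, giving $\dom(\mfq)\subseteq\dom(\mfQ)$. For the reverse inclusion I would exploit that $\{E_1,\dots,E_g\}$ is linearly independent by picking linear functionals $\lambda_k\colon M_{d\times e}(\C)\to\C$ dual to the $E_j$'s, i.e.~$\lambda_k(E_j)=\delta_{jk}$, observing that
\[
  \mfq^k(x)=(\lambda_k\otimes\mathrm{id})\mfQ(x)
\]
holds as an identity of free rational functions, and concluding $\dom(\mfq^k)\supseteq\dom(\mfQ)$; intersecting over $k$ gives the desired inclusion.

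Part \eqref{i:Emfq2} is where the genuine work lies. My plan is to pass to a (minimal) realization $\mfq^k(x)=c_k^{*}(I-\Lambda_S(x))^{-1}b_k$ of each coordinate of $\mfq$, so that $\dom(\mfq^k)=\sI_S$. Applying the involution yields the realization $\widecheck{\mfq^k}(y)=b_k^{*}(I-\Lambda_{S^*}(y))^{-1}c_k$, whose domain is $\sI_{S^*}$. The crux is the adjoint-of-evaluation identity
\[
  (I-\Lambda_{S^*}(Y))^{*}=I-\Lambda_S(Y^{*}),
\]
which follows at once from $\Lambda_{S^*}(Y)^{*}=\Lambda_S(Y^{*})$. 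Consequently $\sI_{S^*}=\sI_S^{*}$, so $\dom(\widecheck{\mfq})=\dom(\mfq)^{*}$. The principal obstacle here is to make sure the abstract algebraic involution on $\rxy$ (defined through $\widecheck{x_j}=y_j$) really does align with the adjoint-of-evaluation behaviour on matrix tuples, and that this translates through minimality of the realization to a statement about $\sI_S$ rather than merely about one particular realization; once this is verified, the argument is a line of computation.

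For part \eqref{i:Emfq3}, the domain of a matrix of free rational functions is the intersection of the domains of its entries, viewed as rational functions in the joint variables $(x,y)$. The identity blocks contribute no constraints; the upper-right block $\mfQ(x)$ depends only on $x$ and by part \eqref{i:Emfq1} requires $X\in\dom(\mfq)$; the lower-left block $\widecheck{\mfQ}(y)$ depends only on $y$ and by part \eqref{i:Emfq2} requires $Y\in\dom(\mfq)^{*}$. Intersecting yields $\dom(\mfr)=\dom(\mfq)\times\dom(\mfq)^{*}$, completing the proof.
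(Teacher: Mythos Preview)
Your argument is correct. Parts \eqref{i:Emfq1} and \eqref{i:Emfq2} match the paper's proof exactly; in fact you give considerably more detail for \eqref{i:Emfq2}, which the paper simply declares ``evident.'' Your realization-based justification (involution takes a minimal realization with $S$ to one with $S^*$, and $\sI_{S^*}=\sI_S^{*}$) is the right way to unpack that word.

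For part \eqref{i:Emfq3} the two approaches diverge. You argue via the general principle that the domain of a matrix of free rational functions is the intersection of the domains of its entries, then read off the constraints block by block. The paper instead observes that
\[
\mfr(x,y)-I=\Lambda_F\big(\mfq(x),\wc{\mfq}(y)\big),\qquad
F_j=\begin{pmatrix}0&E_j\\0&0\end{pmatrix}\ (1\le j\le g),\quad F_j=F_{j-g}^*\ (g<j\le 2g),
\]
notes that the $2g$-tuple $F$ is linearly independent (upper- and lower-triangular parts are separately independent since $E$ is), and then applies part \eqref{i:Emfq1} verbatim to this enlarged setup to conclude $\dom(\mfr)=\dom\big((\mfq(x),\wc{\mfq}(y))\big)$. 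Both routes still need the routine fact that a rational function in $x$ alone, viewed in the joint variables $(x,y)$, has domain $\dom_x\times M(\C)^g$. The paper's recycling of \eqref{i:Emfq1} is a bit more economical because it sidesteps having to justify the matrix-domain-equals-intersection-of-entry-domains principle; your route is more direct but tacitly relies on that principle, which, while true via realization theory, is not stated elsewhere in the paper.
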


\begin{proof}
 The inclusion $\dom(\mfq)\subset \dom(\mfQ)$ is evident. To prove the converse,
 let $1\le k\le g$ be given.  Using the linear independence of $\{E_1,\dots,E_g\},$
 choose a linear functional $\lambda_k$ on the span of $\{E_1,\dots,E_g\}$
 such that $\lambda_k(E_j)=1$ if $j=k$ and $0$ otherwise. It follows that
 the domain of  $\lambda_k(\mfQ(x))= q^k(x)$ contains  $\dom(\mfQ)$. 
 Hence $\dom(\mfQ) \subset \dom(\mfq)$, proving item \eqref{i:Emfq1}.

 Item \eqref{i:Emfq2} is evident as is the inclusion 
 $\dom(\mfr) \supset \dom(\mfq)\times \dom(\mfq)^*$ 
of \eqref{i:Emfq3}. 
 For $1\le j\le g$, let
\[
 F_j = \begin{pmatrix} 0 & E_j \\ 0 & 0\end{pmatrix}
\]
 and let $F_j= F_{j-g}^*$ for $g<j\le 2g$. Observe that 
 $r(x,y)=\Lambda_F(\mfq(x),\wc{\mfq}(y))$.  It follows
 from
 item \eqref{i:Emfq1} applied to $(\mfq(x),\wc{\mfq}(y))$
 and $F$ that
\[
\dom(r)=[\dom(\mfq)\times M(\C)^g] \, \cap \, [M(\C)^g\times \dom(\wc{\mfq})]
   = \dom(\mfq)\times \dom(\mfq)^*,
\]
proving item \eqref{i:Emfq3} and the lemma.
\end{proof}

\begin{proof}[Proof of Theorem \ref{t:rat2}]
It is immediate that
\[
 \mathscr{C}\subset S:=\{X:X\in \dom(\mfq), \ \ \|\Lambda_E(\mfq(X))\| <1\}.
\]
Let $S^0$ denote the connected component of $S$ containing $0$.
Since $\mathscr{C}$ is open, connected and contains
 the origin, $\mathscr{C}\subset S^0.$

 Let $Q = \Lambda_E\circ\mfp$ and 
 let $\mfr$ denote the ($(d+e)\times (d+e)$ symmetric matrix-valued) free rational function
 defined in equation \eqref{e:Emfq}. By Lemma \ref{lem:Emfq}, 
 $\{X: (X,X^*)\in \dom(\mfr)\} = \dom(\mfq)$ and moreover, for 
 $X\in \dom(\mfq),$ we have $\mfq(X)\in \interior(\cB_E)$ if and only  if
 $\mfr(X,X^*)\succ 0$. Thus,
\[
 S= \{X: (X,X^*) \in \dom(\mfr), \, \mfr(X)\succ 0\}.
\]

Arguing by contradiction, suppose  $Y\in S^0$,  but $Y\notin \mathscr{C}$. 
By connectedness, there is a continuous path $F$ in $S^0$ such that
$F(0)=0$ and $F(1)=Y$. Let $0<\alpha\le 1$ be the smallest number such
that $X=F(\alpha)\in  \partial\mathscr{C}$.  
Since $\mfq:\mathscr{C}\to \interior(\cB_E)$ is bianalytic, it is proper.
Hence, if $X\in \dom(\mfq)$,
then $\mfq(X)\in \partial \cB_E$ and consequently $X\notin S$. On the other hand,
if $X\notin \dom(\mfq)$, then $X\notin S$. In either case we obtain a contradiction.
Hence $S^0\subset \mathscr{C}$.

Since $\mathscr{C}=S^0$ is convex (and so connected), Proposition \ref{p:convexrat} implies 
there is a positive integer $\NN$ and tuple $A\in M_\NN(\C)^g$ such that
$\mathscr{C}=\interior(\cD_A)$.   Since $\interior(\cD_A)$ is bounded,
the tuple $A$ is linearly independent.  Without loss of generality,
we may assume that $A$ is minimal for $\cD_A.$
Since $p^{-1}:\interior(\cD_A)\to \interior(\cB_E)$ is bianalytic and
$A$ and $E$ are linearly independent,  Theorem \ref{t:pencil-ball-alt}
and Remark \ref{r:aff}\eqref{i:affa} together imply  $p^{-1}$, and hence $p$, is,
 up to affine linear equivalence, convexotonic and $r\le d+e$ 
 by Theorem \ref{t:pencil-ball-alt}. 
\end{proof}

\appendix

\section{Context and motivation}
\label{r:motivate+}

The main development over the past two decades in convex programming
has been the advent of 
linear matrix inequalities (LMIs); with
the subject generally going under the heading
of semidefinite programming (SDP).
SDP is  a generalization of linear programming
and many branches of science have a collection
of paradigm problems that reduce to SDPs, but not to linear programs.
There is highly developed  software for solving
optimization problems presented as LMIs.
 In $\R^g$ sets defined by LMIs
  are very special cases of  convex sets known as spectrahedra.
 However, as to be discussed, in the noncommutative case
convexity is closely tied to free spectrahedra.

The study of free spectrahedra and their bianalytic equivalence
derives motivation from
systems engineering and connections to other areas of mathematics.
Indeed the paradigm problems in linear systems engineering textbooks
 are {\it dimension free}  in that what is given is a signal flow diagram
and the algorithms and resulting software toolboxes handle {\it any}  system
having this signal flow diagram.
Such a problem leads  to a matrix inequality
whose  solution (feasible) sets $D$ is 
{\it free semialgebraic} \cite{dOHMP09}. Hence $D$ is closed under direct sums and
simultaneous unitary conjugation, i.e., it is a free sets.
In this dimension free setting,  if  $D$ is convex, then it is a free spectrahedron \cite{HM12,Kr}.
For optimization and design purposes,
it is hoped that $D$ is convex (and hence a spectrahedron), and algorithm designers
put great effort into converting (say by change of variables)
the problem they face to one that is convex.

If the domain $D$  is not
convex one might attempt to  map it bianalytically to a free
spectrahedron.
The classical problems of linear control that reduce to convex problems
all require a change of variables, see \cite{Skelton}.
One  bianalytic map composed with the inverse of another
leads to a bianalytic map between free spectrahedra;
thus maps between free spectrahedra characterize the non-uniqueness
of bianalytic mappings from the solution set $D$ of a
system of matrix inequalities  to a free spectrahedron.

Studying bianalytic
maps between free spectrahedra is a free analog of rigidity problems in several
complex variables
\cite{DAn93, For89, For93, HJ01, HJY14, Kra92}.
Indeed, there is a large literature
on bianalytic maps on convex sets. For example,
Forstneri\v c \cite{For93} showed that any proper
map between balls with sufficient regularity at the boundary must be rational.
The conclusions we see here in  Theorems \ref{t:pencil-ball-alt},  \ref{t:B2B}
and \ref{t:1point1} 
are vastly more rigid than mere birationality.

\end{document}